\documentclass[10pt]{article}
\usepackage{fullpage}
\usepackage{color, tikz}
\usepackage{amsfonts}
\usepackage{amssymb}
\usepackage{changepage}
\usepackage{amsthm}
\usepackage{amsmath}
\usepackage{bm}
\usepackage{esint}
\usepackage{lscape}
\usepackage{hyperref}
\usepackage{verbatim}
\usepackage{authblk}
\allowdisplaybreaks

\makeatletter
\DeclareRobustCommand\widecheck[1]{{\mathpalette\@widecheck{#1}}}
\def\@widecheck#1#2{%
    \setbox\z@\hbox{\m@th$#1#2$}%
    \setbox\tw@\hbox{\m@th$#1%
       \widehat{%
          \vrule\@width\z@\@height\ht\z@
          \vrule\@height\z@\@width\wd\z@}$}%
    \dp\tw@-\ht\z@
    \@tempdima\ht\z@ \advance\@tempdima2\ht\tw@ \divide\@tempdima\thr@@
    \setbox\tw@\hbox{%
       \raise\@tempdima\hbox{\scalebox{1}[-1]{\lower\@tempdima\box
\tw@}}}%
    {\ooalign{\box\tw@ \cr \box\z@}}}
\makeatother

\newcommand{\sym}{\mathop{{\rm sym}\!}}

\newtheorem{theorem}{Theorem}[section]
\newtheorem{proposition}[theorem]{Proposition}

\newtheorem{lemma}[theorem]{Lemma}

\newtheorem{remark}[theorem]{Remark}

\begin{document}

\title{\sc Sharp operator-norm asymptotics for thin elastic plates with rapidly oscillating periodic properties}

\author[1]{Kirill Cherednichenko}
\author[2]{Igor Vel\v{c}i\'{c}\,}
\affil[1]{Department of Mathematical Sciences, University of Bath, Claverton Down, Bath,\qquad\qquad  BA2 7AY, United Kingdom (Email: k.cherednichenko@bath.ac.uk)}
\affil[2]{Faculty of Electrical Engineering and Computing, University of Zagreb, Unska 3,\qquad\qquad 10000 Zagreb, Croatia (Email: igor.velcic@fer.hr)}
\maketitle

\begin{abstract}
 
  We analyse a system of partial differential equations describing the behaviour of an elastic plate with periodic moduli in the two planar directions, in the  asymptotic regime when the period and the plate thickness are of the same order.  Assuming that the displacement gradients of the points of the plate are small enough for the equations of linearised elasticity to be a suitable approximation of the material response, such as the case in {\it e.g.} acoustic wave propagation,
 we derive a class of ``hybrid'', homogenisation dimension-reduction, norm-resolvent estimates for the plate, under different energy scalings with respect to the plate thickness.
 
 
 
 \vskip 0.5cm

{\bf Keywords} Homogenisation $\cdot$ Dimension reduction $\cdot$  Effective properties $\cdot$ Asymptotics $\cdot$ Korn inequalities
$\cdot$ Elastic plates

\vskip 0.5cm

{\bf Mathematics Subject Classification (2010):}
 35C20, 74B05, 74Q05 (primary), 74K20

\end{abstract}

\section{Introduction}

\label{intro_section}

The present work is a contribution to the analysis of the asymptotic behaviour of solutions $U=U^\varepsilon$ to systems of parameter-dependent partial differential equations with periodic coefficients, of the form 
\begin{equation}
{\mathcal D}^*\bigl(A(\cdot/\varepsilon){\mathcal D}U\bigr)+U=F,\qquad U\in X,\ \ F\in X^*,\qquad\varepsilon>0,
\label{general_eq}
\end{equation}
where ${\mathcal D}$ is a suitably defined differential operator, $X$ is the normed space of square-integrable functions  $U$ on a domain $\Pi^h \subset{\mathbb R}^3$ with values in ${\mathbb R}^3$ that have finite energy norm $\Vert{\mathcal D}U\Vert_{L^2(\Pi^h, {\mathbb R}^{3\times 3})},$ $X^*$ is the space of bounded linear functionals on $X,$ and $A$ is a periodic function taking values in the space of positive definite fourth-order tensors.   

\subsection{Periodic homogenisation: warm-up}

Since the early days of the mathematical theory of homogenisation, problems of the form (\ref{general_eq}) and their scalar counterparts, where $U$ is 
an ${\mathbb R}$-valued function and $A$ is matrix-valued, have served as the first step in understanding the macroscopic behaviour of multiscale media, see  \cite{BLP}, \cite{BP}, and references therein. This r\^{o}le has been motivated by the relatively straightforward structure of the basic functions in perturbation series with respect the parameter representing the ratio between the micro- and macroscale wavelengths in a given problem, as well as by the availability of multiscale compactness statements for solution sequences, with their limits usually having the form of the mentioned basic functions. A typical asymptotic series used in the related analysis for the solution $U^\varepsilon$ has the form
\begin{equation}
U^\varepsilon\sim U^{(0)}\biggl(x, \frac{x}{\varepsilon}\biggr)+
U^{(1)}\biggl(x, \frac{x}{\varepsilon}\biggr)+ 
U^{(2)}\biggl(x, \frac{x}{\varepsilon}\biggr)+\dots,\ \quad U^{(j)}=O(\varepsilon^j),\ \  j=0,1,2,\dots,\quad\quad\varepsilon\to0,
\label{two_scale_series}
\end{equation}
where the functions $U^{(j)},$ $j=0,1,2,\dots,$ 
are periodic with respect to their second argument, with the same period as the tensor of coefficients $A$ in the original equation (\ref{general_eq}).

The series (\ref{two_scale_series}) proves to be an effective tool for estimating the convergence error for solutions to $\varepsilon$-dependent families (\ref{general_eq}) with a fixed set of data, such as the density of the applied forces (represented by the right-hand side $F$). In this case the assumption of periodic dependence of the terms $U^{(j)},$ $j=0,1,2,\dots,$ on the ``fast'' variable $x/\varepsilon$ in (\ref{two_scale_series}) suffices for purposes of the analysis. However, the constants in the convergence estimates obtained in this way may blow up for sequences of data with bounded $L^2$-norm, which makes them unusable for controlling the behaviour of the spectra or eigenspaces of the corresponding operators.

\subsection{Operator estimates}

 In order to gain error bounds that are uniform with respect to the data
(and hence obtain a sharp quantitative description of the asymptotic behaviour of the spectrum), one can replace the series (\ref{two_scale_series}) by a family of power-series expansions parametrised by the ``quasimomentum'' $\chi,$ which represents variations over intermediate scales of the length of several periods, see  \cite{ChCoARMA}, \cite{Quasi_Cooper}. The corresponding asymptotic procedure can be viewed as a combination of the classical perturbation theory with ``matched asymptotics'' on the domain of the quasimomentum. In this approach, the control of the resolvent in the sense of the operator norm is obtained by means of a careful analysis of the remainder estimates for the power series, taking advantage of the related Poincar\'{e}-type inequalities (or Korn-type inequalities for vector problems) that bound the $L^2$-norm of the solution by its energy norm.  Importantly, in order to provide the required uniform estimates, such inequalities must reflect the fact that the lowest eigenvalue of the 
$\chi$-parametrised ``fibre" operator tends to zero as $\vert\chi\vert\to0,$ and hence the $L^2$-norm of the corresponding eigenfunction with unit energy blows up.

In the present work we tackle a problem where the above kind of uniform estimates needs to be controlled with respect to an additional length-scale parameter, which represents one of the overall dimensions of the medium, namely the thickness of a thin plate in our case. 

\subsection{Homogensation of thin plates}

The presence of a small parameter in the thin-plate context introduces a geometric anisotropy into the problem, which requires simultaneous error analysis of in-plane and out-of-plane components of the solution field. This leads to the presence of non-trivial invariant subspaces in the asymptotically equivalent problem obtained in the course of the analysis, yielding an asymptotic decomposition of the spectrum of the original problem into several distinct components. There are two such components in the case of a plate: they are responsible for different modes of wave propagation (for plate sizes large enough so the notion of wave propagation is meaningful.) The information about the values of the quasimomentum for which such propagating modes can be identified allows one to draw conclusions about the corresponding wavelengths (as, roughly, the inverses of the quasimomenta). This, in turn, provides asymptotics for the corresponding density of states (for a definition thereof, see {\it e.g.} \cite[Section XIII.16]{ReedSimon}) and hence allows one to evaluate the energy contribution of the associated propagating modes to an arbitrary wave packet.

In order to be able to implement the above ideas, we consider an infinite plate, which we view as a model of a plate of finite size when the ratio between the plate thickness and all of its in-plane dimensions is small. While our approach leads to new qualitative and quantitative results already in the case of homogeneous plates, here we consider plates whose material constants are periodic in the planar directions, so that the analysis can also be interpreted as simultaneous dimension reduction and homogenisation. From the perspective of dimension reduction, the estimates we obtain complement and strengthen the results on asymptotic two-dimensional linear plate models obtained in the 1980s through the work of Ciarlet and his collaborators, see \cite{ciarlet} and references therein. In the approach described in \cite{ciarlet}, where the displacements  are set to vanish on a part of the boundary of the plate, error estimates are derived for an equilibrium problem, for a given density of applied body forces. 
Our asymptotic estimates are of the operator-norm type, {\it i.e.} the approximation rate is proportional to the norm of the function representing the applied forces. In particular, by localising these estimates with respect to the quasimomentum $\chi$ and considering suitable path integrals of the 
$\chi$-parametrised resolvents as functions of the spectral parameter, they imply 
the operator-norm asymptotics of the spectral projections on bounded intervals of the real line. Furthermore, either suitably adapted to the parabolic setting or by a careful control of the dependence of the constant in the error estimates, our method readily yields estimates for the time-evolution semigroups in the context of thin plates. Finally, a further refinement of the approach allows one to treat the unitary groups for hyperbolic problems. Our strategy in addressing these contexts would follow the approaches of  \cite{Suslina_parabolic}, \cite{Suslina_parabolic_corrector}, \cite{Meshkova_Suslina}, \cite{BirmanSuslina_hyperbolic}, \cite{Meshkova_hyperbolic_Math_Notes}, \cite{Meshkova_hyperbolic_full}. We postpone the detailed analysis of these settings to a future publication.

\subsection{Comparison with other approaches to operator-norm estimates and simultaneous homogenisation and dimension reduction in linear elasticity}

A research programme similar to the above, although outside the context of thin structures and using a different analytical approach, has been pursued by Birman, Suslina, and subsequently by Suslina and her students, starting with \cite{BirmanSuslina}, \cite{BirmanSuslina_corrector}. At the heart of their technique is the notion of a spectral germ for operator pencils, which quantifies the leading order of frequency dispersion of waves in a heterogeneous medium near the bottom of the spectrum of the associated differential operator with periodic coefficients. Complemented with the analysis of a Cauchy integral for a suitable operator-valued function of the spectral parameter, the analysis of the spectral germ allows one to obtain sharp operator-norm estimates for the resolvents in the direct integral representing the original operator via the standard Floquet-Bloch-Gelfand decomposition (parametrised by the quasimomentum $\chi,$ as we mention above).  

Our approach is different in the way we approximate the operator functions entering the mentioned Cauchy integral and is closer in spirit to the classical asymptotics via power series, where it is convenient to use the quasimomentum $\chi$ as the expansion parameter. In this respect the technique we develop for plates also differs from the method introduced in \cite{ChCoARMA} outside the thin-structure context, where the expansion parameter is the period 
$\varepsilon$ (albeit it could certainly be carried out in terms of the quasimomentum,
following the strategy of the present work). Like in \cite{ChCoARMA}, and in contrast to the approach of Birman and Suslina, the key analytical ingredient in our analysis are $\chi$-uniform estimates for the remainders in these expansions. The estimates are based on appropriate Korn-type inequalities  (see Section \ref{Korn_section}), and any information concerning spectral behaviour is obtained by studying suitable Rayleigh quotients (see Section \ref{structure}) and is used for identifying elements of the asymptotic procedure rather than for spectral analysis as such. 

The approach initiated by Birman and Suslina has proved fruitful in obtaining operator-norm and energy estimates for a number of related problems:  boundary-value operators \cite{Suslina_Dirichlet}, \cite{Suslina_Neumann}, parabolic semigroups \cite{Suslina_parabolic}, \cite{Suslina_parabolic_corrector}, \cite{Meshkova_Suslina}, hyperbolic groups \cite{BirmanSuslina_hyperbolic}, \cite{Meshkova_hyperbolic_Math_Notes}, \cite{Meshkova_hyperbolic_full}, perforated domains \cite{Suslina_perforated}. The key technical milestones for this progress are boundary-layer analysis for bounded domains (as in \cite{Suslina_Dirichlet}, \cite{Suslina_Neumann}) and two-parametric operator-norm estimates \cite{Suslina_two_parametric}. It seems natural to conjecture that similar developments could be pursued in the context of thin plates, both infinite and bounded, by taking either the spectral germ approach or the one we introduce in the present article.  (See, however, the discussion at the end Section \ref{structure}
as well as Remark \ref{f3_omitted}.)

An overview of the existing approaches to obtaining operator-norm estimates would not be complete without mentioning also the works \cite{Griso_2006}, \cite{ZhikovPastukhova}, \cite{Kenig}, whose methods could also be considered in the context of thin structures. However, here we refrain from pursuing the related discussion.



In terms of the specific plates context, quantitative results for the spectrum on a bounded plate of thickness $h$ have been obtained in \cite{Dauge}, for the case of an isotropic homogeneous elasticity tensor. 
The authors separate the problem into two invariant subspaces and identify spectra of orders $h^2$ and $1.$
However, their estimates (due to the scaling of the spectrum in one of the two invariant subspaces) imply that the error of the approximation explodes on any compact frequency interval as $h\to0$. In Section \ref{normresolvent} we obtain an operator-norm resolvent approximation for the infinite plate, 
which allows us to provide more information concerning the behaviour of the subspace that \cite{Dauge} is unable to control, by scaling the elasticity operator with an appropriate power of $h,$ see also Remark \ref{expl3} below.

A variety of problems concerning simultaneous homogenisation and dimension reduction have been studied:
 in the finite-domain setting for plates and rods by \cite{Panasenko_book}, see also references therein, where the error of the asymptotics depends in a non-uniform way on the problem data, see also \cite{Caill}, \cite{Cioranescu_Damlamian_Griso}; in the context of general homogenisation for a linear plate by \cite{BV}, \cite{Dam}; in the non-linear context by \cite{BFF}  (membrane energy scaling),  \cite{NV_vK} (von K\`{a}rm\`{a}n plate), \cite{CC}, \cite{HNV} (bending energy).


In the present paper we focus on the most interesting
case when the period $\varepsilon$ of the composite structure is of the same order as the plate thickness $h.$ The analysis of the operator-norm resolvent asymptotics in other asymptotic regimes, when the two passages to the limit as $\varepsilon\to0$ and $h\to0$ are carried out sequentially, is outside the scope of our work, although the two-scale limit in the strong resolvent sense, {\it cf.} (\ref{two_scale_series}), is likely to involve either homogenisation of the order-four Kirchhoff-Love plate equations in the case when $h\ll\varepsilon$ or the thin-plate limit of a homogeneous plate when $h\gg\varepsilon.$ We conjecture that these also give the corresponding operator-norm resolvent asymptotics.



\subsection{Structure of the paper}

Here we briefly outline the structure of the manuscript. In Section \ref{formulation} we give problem formulation and the main statements --- the full set of results and their proofs are postponed to Section \ref{normresolvent}. Section \ref{strategynovelties} provides basic properties of Floquet transform, outlines our strategy for the proofs to follow, and describes the novel aspects of our approach.  In Section \ref{aux_sec} we give some auxiliary statements that are needed for Section \ref{asymptotic_proc}, where the main estimates are obtained through an asymptotic expansion. Section \ref{apriori_sec} is devoted to the derivation of {\it a priori} estimates for the solutions of the resolvent formulations we analyse in the present work --- this section is not needed for the proofs, but is helpful in understanding what is the form of approximation we are looking for.

\subsection{Notation}

We conclude the introductory section by listing some notation we use throughout the paper.

--- The notation $\cdot$ is used for the standard Euclidean inner product in 
${\mathbb R}^3$ or ${\mathbb C}^3$ (where the choice is always clear from the context);  

--- We use the following inner product $:$ in the space of $(3\times3)$-matrices with 
${\mathbb R}$- or ${\mathbb C}$-valued entries:
$$
M:N={\rm tr}\bigl(M^\top N\bigr),\qquad M, N\in {\mathbb R}^{3\times3}\ \bigl(M, N\in {\mathbb C}^{3\times3}\bigr);
$$

--- For a $(d\times d)$-matrix $M,$ $d=2,3,$
 its symmetrisation is defined by 
${\rm sym}\,M:=2^{-1}(M+M^\top);$

--- By $L^2(X,Y),$ $H^1(X,Y)$ we denote, respectively, the Lebesgue and Sobolev spaces of functions defined on $X$ and taking values in $Y;$ 

--- A similar notation $C_0^\infty(X, Y)$ is used for the set of infinitely smooth functions with compact support in $X;$ 

--- We often denote by $C$ constants in various estimates, whenever the precise value of the constant is unimportant;

--- Summation over the set $\{1,2,3\}$ is assumed for repeated Latin indices and over the set $\{1,2\}$ for repeated Greek indices; 

--- Vectors are always treated as columns, so that $(x_1, x_2)^\top$ and $(x_1, x_2, x_3)^\top$ stand for the vectors with components $x_1,$ $x_2$ and $x_1,$ $x_2,$ $x_3,$ respectively; 

--- We denote by $\nabla=(\partial_1, \partial_2, \partial_3)$ and  $\widetilde{\nabla}=(\partial_1, \partial_2)$ the gradients in three and two dimensions, respectively, and by $\widetilde{\nabla}^2$ the second gradient in two dimensions, understood as a $(2\times2)$-matrix differential expression; 

--- For $\chi\in[-\pi, \pi)^2,$ the quasiperiodic exponential function $\exp({\rm i}\chi\cdot y),$ $y\in[0,1)^2,$ is denoted by $e_\chi.$ It is often treated as a function of $(y, x_3)\in[0,1)^2\times(-1/2, 1/2),$ constant with respect to the variable $x_3.$

\section{Problem formulation and main results}
\label{formulation}


Denote $Q_{\rm r}:=[0,1)^2\subset{\mathbb R}^2$ and 
consider a function on $Q_{\rm r}$ with values in the space of fourth-order tensors, {\it i.e.}
\[
A(y)=\bigl\{A_{ijkl}(y)\bigr\}_{i, j,k,l=1}^3,\quad y\in Q_{\rm r}.
\] 
We assume that $A$ is measurable, symmetric, bounded, and uniformly positive definite: for a.e. $y\in Q_{\rm r}$
\[
A_{ijkl}(y)=A_{klij}(y)=A_{ijlk}(y),\quad i,j,k,l=1,2,3,
\]
\begin{align*}
&\nu^{-1}\xi:\xi\ge A(y)\xi:\xi\ge\nu\xi:\xi,\quad\nu>0,\qquad \forall\xi\in{\mathbb R}^{3\times3}\ {\rm such that}\  \xi=\xi^\top,\\[0.35em]
&\bigl(A(y)\xi\bigr)_{ij}:=A_{ijkl}(y)\xi_{kl},\quad i, j=1,2,3,
\end{align*}
where $\nu$ does not depend on $y.$ In what follows we also assume that $A$ is extended to the whole of ${\mathbb R}^2$ by $Q_{\rm r}$-periodicity.  For all $h>0$ we denote $\Pi^h:={\mathbb R}^2\times (-h/2, h/2),$ and for each $\varepsilon>0,$ $h>0$ suppose that the tensor of elastic moduli at any point
$(x_1, x_2, x_3)\in\Pi^h$ is given by $A(x_1/\varepsilon, x_2/\varepsilon).$ 

For given ``body-force densities'' $F^h\in L^2(\Pi^h, {\mathbb R}^3),$ 
we  
study the behaviour of the following family of resolvent problems for $U^{h,\varepsilon}\in H^1(\Pi^h, {\mathbb R}^3)$ as $\varepsilon\to0,$ $h\to0:$
\begin{equation}
\int_{\Pi^h} A\biggl(\frac{x_1}{\varepsilon}, \frac{x_2}{\varepsilon}\biggr) \,{\rm sym} \nabla U^{h,\varepsilon}:{\rm sym}\nabla\Phi+\int_{\Pi^h}U^{h,\varepsilon}\cdot\Phi=\int_{\Pi^h}F^h\cdot\Phi\quad\quad\forall\Phi\in H^1(\Pi^h, {\mathbb R}^3),
\label{original_identity}
\end{equation}



We shall assume throughout the paper that $h=\varepsilon.$ Our analysis is valid if we assume that the thickness $h$ is a function of the period $\varepsilon$ in the asymptotic regime $h=h(\varepsilon) \sim \varepsilon$, {\it i.e.} in the case when the quantity $h/\varepsilon$ is bounded above and below by $\varepsilon$-independent positive constants. For adapting the analysis to this more general case, see Section \ref{rgamma}. 
For each $\varepsilon>0$ we denote by  ${\mathcal A}^\varepsilon$ the operator in $L^2(\Pi^h , {\mathbb R}^3)$ defined by the bilinear form ({\it cf.} (\ref{original_identity}))
\begin{equation*}
\int_{\Pi^h } A\biggl(\frac{x_1}{\varepsilon}, \frac{x_2}{\varepsilon}\biggr) \,{\rm sym} \nabla U:{\rm sym}\nabla\Phi,\qquad  U,\Phi\in H^1(\Pi^h , {\mathbb R}^3).
\end{equation*}
Next, denote $I:=(-1/2, 1/2),$  $Q:=Q_{\rm r}\times I$ and consider the following symmetric tensor $\mathcal{L}$ of order 4:
\begin{equation}
\begin{aligned} 
\mathcal{L}(M_1,M_2) : (M_1,M_2):= \inf_{\psi \in H^1_{\#}(Q,\mathbb{R}^3)}\int_Q A\bigl({\mathfrak I}( M_1-x_3M_2)&+{\rm sym}\nabla \psi\bigr):\bigl( {\mathfrak I}(M_1-x_3 M_2)+{\rm sym}\nabla \psi\bigr),\\[0.1em]
&M_1,M_2 \in \mathbb{R}^{2 \times 2},
\end{aligned}
\label{L_definition}
\end{equation}
where ${\mathfrak I}$ is the mapping
\begin{equation}
{\mathfrak I}: \mathbb{R}^{2 \times 2}\ni M\mapsto \left(\begin{matrix} M &\begin{matrix}0\\0\end{matrix}\\ \begin{matrix}0&0\end{matrix}&0\end{matrix}\right)\in\mathbb{R}^{3\times3}.
\label{iota_def}
\end{equation}
We also employ the following tensors $\mathcal{L}^1$, $\mathcal{L}^2$ on $\mathbb{R}^{2 \times 2},$ representing the out-of-plane and in-plane ``truncations" of the tensor ${\mathcal L}:$
$$ 
\mathcal{L}^1 M:M :=\mathcal{L} (0,M):(0,M), \qquad  \mathcal{L}^2 M:M :=\mathcal{L} (M,0):(M,0),\qquad M\in\mathbb{R}^{2 \times 2},
$$
and the differential operators
\begin{equation}
\mathcal{A}^{{\rm hom}, 1}:=
\bigl(\widetilde{\nabla}^2\bigr)^{*} \mathcal{L}^1\widetilde{\nabla}^2,\qquad 
\mathcal{A}^{{\rm hom}, 2}:=
\bigl({\rm sym}\widetilde{\nabla}\bigr)^{*} \mathcal{L}^2\,{\rm sym}\widetilde{\nabla},\qquad 
\mathcal{A}^{\rm hom}:=\bigl({\rm sym}\widetilde{\nabla}, \varepsilon\widetilde{\nabla}^2\bigr)^{*} \mathcal{L}\bigl({\rm sym}\widetilde{\nabla}, \varepsilon\widetilde{\nabla}^2\bigr),
\label{Ahoms}
\end{equation} 
defined in the standard way ({\it e.g.} via appropriate bilinear forms \cite{Davies}) in $L^2({\mathbb R}^2, {\mathbb R}),$ $L^2({\mathbb R}^2, {\mathbb R}^2),$ and $L^2({\mathbb R}^2, {\mathbb R}^3),$ respectively.  In (\ref{Ahoms}) we use the shorthand $({\rm sym}\widetilde{\nabla}, \varepsilon\widetilde{\nabla}^2)$ for the pair of $(2\times 2)$-matrix differential expressions
\[
(u,v)\mapsto\bigl({\rm sym}\widetilde{\nabla}u, \varepsilon\widetilde{\nabla}^2v\bigr),\qquad u=\bigl(u_1(x_1, x_2), u_2(x_1, x_2)\bigr),\quad v=v(x_1, x_2).
\]

\begin{remark}
\label{different_scalings}
It is not unusual in the theory of plates that the out-of-plane component of the displacement is scaled in relation to the in-plane components (see {\it e.g.} \cite{ciarlet}), hence the presence of the parameter $\varepsilon$ in the expression (\ref{Ahoms}) for ${\mathcal A}^{\rm hom}.$ It is well known that in this context one also has a ``separation" of the asymptotic equations of state in the thin-plate limit \cite[Theorem 1.10-1]{ciarlet}: one equation describes the out-of-plane component of the leading-order displacement and a part of its horizontal components, while the other describes the remaining part of 
the horizontal components. In the case when material properties oscillate in-plane or vary out-of-plane, this kind of separation happens only under an additional symmetry assumption on the elasticity tensor.
\end{remark}

Before stating the main result, we recall once again the existing literature on simultaneous homogenization and dimension reduction in linear elasticity: in \cite{Caill}, bounded linearly elastic plates were studied, under the assumption of periodic oscillations in material properties and $h=\varepsilon$; in \cite{Dam}, a model of a linearly elastic plate was derived without periodicity assumption and invoking additional symmetries of the material, by the $H$-convergence approach (the assumption of material symmetries is removed in \cite{BV}, which uses 
an original approach in the spirit of $\Gamma$-convergence and additionally proves locality of the $\Gamma$-closure, i.e., the fact that every energy density obtained by non-periodic homogenization can be approximated pointwise by energy densities obtained by periodic homogenization); in \cite{Panasenko_book} the full asymptotic expansion with error estimates (as well as 
a boundary layer analysis) for bounded plates and rods is given, although the estimates are not written in the operator norm (i.e. the  constants involved depend on the problem data.) 

For $k=1,2,3,$ denote by ${\mathcal F}_\varepsilon$ the 
  Floquet transform\footnote{The transform (\ref{Floquet_scalar}) is a bounded extension of the mapping defined on $C_0^\infty(\Pi, {\mathbb C}^k)$ by the same formula, see Section \ref{strategynovelties}, where the transform defined by (\ref{Floquet_scalar}) is extended to $L^2(\Pi, {\mathbb C}^k).$}
 \begin{equation}
L^2(\Pi, {\mathbb C}^k)\ni U\mapsto\frac{\varepsilon^2}{2\pi}\sum_{n\in\mathbb{Z}^{2}}\overline{e_\chi(n)}
U\bigl(\varepsilon(y+n), x_3\bigr),\quad (y, x_3)\in Q,
\quad\chi\in Q'_{\rm r}:=[-\pi, \pi)^2.
\label{Floquet_scalar}
 \end{equation}
Furthermore, we define the operators $P_j:L^2(\Pi^h , \mathbb{R}^3)\to L^2(\Pi^h , \mathbb{R})$ of orthogonal projection on the $j$-th coordinate axis, $j=1,2,3,$ as well as the ``smoothing" operators for momenta and forces:

\begin{equation*}
S_\alpha F={\mathcal F}_\varepsilon^*\biggl({\rm i}e_{\chi} \chi_{\alpha}\int_Q \overline{e_{\chi}} x_3 {\mathcal  F}_\varepsilon F\biggr),\quad\alpha=1,2,\qquad
SF:={\mathcal F}_\varepsilon^*\biggl(e_{\chi}\int_Q\overline{e_{\chi}}{\mathcal F}_\varepsilon F\biggr),\qquad F\in L^2(\Pi , {\mathbb R}).
\end{equation*}

The appearance of smoothing operators is standard in homogenisation, as they guarantee that the corrector is in an appropriate function space \cite{BirmanSuslina_corrector}, \cite{ZhikovPastukhova}.  
In some situations it is possible to eliminate them
 from the final results, provided the relevant correctors satisfy certain regularity assumptions. In our setting the situation turns out to be more complex, due to the existence of a bending invariant subspace, where the approximating operator is of fourth order. Although in the end it is possible to remove 
 $S_\alpha,$ $S$ from some of the final estimates, we refrained from doing so, since it cannot be done for all estimates at once. 

Finally, we introduce the 
operators 
$R^h: L^2(\Pi^h,\mathbb{R}^k)\to
L^2(\Pi, \mathbb{R}^k),$ $k=1, 2, 3,$ 
defined on $u\in L^2(\Pi^h,\mathbb{R}^k)$ for each $h$ by the formula 
\[
(R^hu)(x_1, x_2, x_3)=\sqrt{h}u(x_1, x_2, hx_3). 
\]

 We shall omit the label $k$ in the notation for these operators, as the value of $k$ in a given expression will always be clear from the context.
As is common in the analysis of thin structures, we will express the error on the ``canonical" domain $\Pi$ rather than on physical domain $\Pi^h.$ One reason for presenting the results in this way
is that it allows us to scale the displacement gradients ``isotropically", according to the same scaling rule in the in-plane and out-of-plane directions.
 
\begin{theorem}
\label{main_result_first} 
Suppose that
$\gamma\ge-2,$ $\delta\ge0$ and the following ``planar symmetry" conditions on the elasticity tensor $A$ hold:
\begin{equation}
\label{symmetry}
A_{\alpha_1\alpha_2\alpha_33}=0,\quad\quad 
A_{\alpha_1333}=0,\quad\quad\quad
\alpha_j=1,2,\quad j=1,2,3,
\end{equation}
and that the body-force density $F^h$ in (\ref{original_identity})
has the form 
\[
F^h=\bigl(\varepsilon^{-\delta}\widecheck{F\,}\!\!_1,\varepsilon^{-\delta}\widecheck{F\,}\!\!_2, \widehat{F}_3\bigr)^\top,
\] where the first two components are odd and the third component is even in the variable across the plate.

There exists $C=C(A)>0,$ independent of $F^h,$ such that for all $\varepsilon>0$ the following estimates hold:
\begin{equation}
 \begin{aligned}
 \Bigl\|R^h P_{\alpha}&\left(\varepsilon^{-\gamma} \mathcal{A}^{\varepsilon}+I\right)^{-1}F^h\\[0.5em]
 &+\Re\Bigl\{\varepsilon x_3\partial_{\alpha}\left(  \varepsilon^{-\gamma+2}\mathcal{A}^{{\rm hom}, 1} +I \right)^{-1}\left(SR^h\widehat{F}_3+ \varepsilon^{-\delta}{S}_1R^h\widecheck{F\,}\!\!_1+\varepsilon^{-\delta}{S}_2R^h\widecheck{F\,}\!\!_2\right)\Bigr\}\Bigr\|_{L^2(\Pi)} \\[0.5em] 
 &  \hspace{+10ex}  \leq C \varepsilon^{(\gamma+2)/2} \max\bigl\{\varepsilon^{(\gamma+2)/4-\delta},1\bigr\}\bigl\|R^h(\widecheck{F}_1,\widecheck{F}_2, \widehat{F}_3)^\top\bigr\|_{L^2(\Pi,{\mathbb R}^3)} , \quad  \alpha=1,2,\\[1.1em]
 \Bigl\|R^hP_3&\left(\varepsilon^{-\gamma} \mathcal{A}^{\varepsilon}+I\right)^{-1}F^h
 -\Re\Bigl\{\left(  \varepsilon^{-\gamma+2}\mathcal{A}^{{\rm hom}, 1} +I \right)^{-1} \left(SR^h\widehat{F}_3+\varepsilon^{-\delta}S_1R^h\widecheck{F\,}\!\!_1+\varepsilon^{-\delta}S_2 R^h\widecheck{F\,}\!\!_2\right)\Bigr\}\Bigr\|_{L^2(\Pi)} \\[0.5em] 
 & \hspace{+10ex}  \leq C \varepsilon^{(\gamma+2)/4} \max\bigl\{\varepsilon^{(\gamma+2)/4-\delta},1\bigr\}\bigl\|R^h(\widecheck{F\,}\!\!_1,\widecheck{F\,}\!\!_2, \widehat{F}_3)^\top\bigr\|_{L^2(\Pi,{\mathbb R}^3)}. 
 \end{aligned}
\label{first_subspace_est}
\end{equation}

\end{theorem}

A simple analysis of the exponents in the estimates (\ref{first_subspace_est}) shows that the approximation error in (\ref{first_subspace_est}) goes to zero as $\varepsilon\to0$ as long as $\gamma+2>2\delta.$

\begin{remark}
1. In the case $\gamma=2,$ $\delta=1,$ the formulae the approximating fields in (\ref{first_subspace_est}) represent the well-known Kirchhoff-Love ansatz of the theory of plates, see {\it e.g.} \cite{ciarlet}. In particular, they reflect the different scalings of in-plane and out-of-plane components with respect to the parameter $\varepsilon$, see also Remark \ref{different_scalings}. Also, the dependence of the approximating deformation on the moments of in-plane forces is standard, see \cite[Theorem 1.10-1]{ciarlet}.

2. In Theorem \ref{main_result_first}, we establish operator-norm asymptotics for a variety of scaling exponents 
$\gamma,$ not restricting ourselves to the case $\gamma=2,$ $\delta=1$ discussed in \cite{ciarlet} in the context of strong convergence (see also \cite{Miara} showing that the traditional power series asymptotics dictates the choice 
$\gamma=2$). This flexibility in the choice of scaling of the elastic energy (equivalently, the choice of scaling of the spectrum of the original ``unscaled" operator), combined with the fact that the error estimates are given in the operator-norm topology, provides useful quantitative information for evolution problems for elastic plates, under a wider range of time scalings (corresponding to the choice of $\gamma$) that have been considered so far. The scaling exponent of the 
force density translates directly to a similar scaling of the force density in the associated evolution problems. We refer the reader to the preprint \cite{BuzChVZ}, where the scalings $\gamma=0$, $\gamma=2$ and their consequences for the evolution equation are analysed in the context of bounded plates with high-contrast inclusions.
\end{remark}

\begin{theorem}
\label{main_result_second} 
Suppose that 
the tensor $A$ has the symmetry properties (\ref{symmetry})
and that in (\ref{original_identity}) the first two components of 
$F^h=\bigl(\widehat{F}_1,\widehat{F}_2, \widecheck{F\,}\!\!_3\bigr)$ 
are even and the third component is odd in $x_3\in hI.$
Then for each $\gamma\ge-2$ there exists $C=C(A)>0,$ independent of $F^h,$ such that for all $\varepsilon>0$ the following estimate holds:

\begin{equation}
\left\| R^h\left(\varepsilon^{-\gamma} \mathcal{A}^{\varepsilon}+I\right)^{-1}F^h
-\left(\begin{array}{c}\left(\varepsilon^{-\gamma}\mathcal{A}^{{\rm hom}, 2} +I \right)^{-1}SR^h\bigl(\widehat{F}_1, \widehat{F}_2 \bigr)^\top\\[0.4em]0\end{array}\right)\right\|_{L^2(\Pi , \mathbb{R}^3)}
\leq C \varepsilon^{(\gamma+2)/2} \bigl\| R^hF^h
\bigr\|_{L^2(\Pi ,\mathbb{R}^3)}.
\label{second_subspace_est}
\end{equation}

\end{theorem}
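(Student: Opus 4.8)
The plan is to work on the Floquet side, using the transform $\mathcal{F}_\varepsilon$ to pass from the resolvent $(\varepsilon^{-\gamma}\mathcal{A}^\varepsilon+I)^{-1}$ on $\Pi^h$ to a direct integral of fibre operators $(\varepsilon^{-\gamma}\mathcal{A}_\chi^\varepsilon+I)^{-1}$ over $\chi\in Q'_{\rm r}$, each acting on $L^2(Q,\mathbb{C}^3)$. After the standard rescaling $x_3\mapsto x_3/h$ and the change $\varepsilon^{-\gamma}\to$ order-one coefficient, the fibre problem becomes an elasticity problem on the fixed cell $Q=Q_{\rm r}\times I$ with quasiperiodic boundary conditions in the planar variables depending on $\chi$. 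The parity hypothesis on $F^h$ (first two components even, third odd in $x_3$) singles out the invariant subspace corresponding to in-plane stretching: on this subspace the leading behaviour of the fibre operator, as $|\chi|\to0$, is governed by the membrane energy $\mathcal{L}^2$, which is positive definite on $\mathbb{R}^{2\times2}_{\rm sym}$ and \emph{does not degenerate} as $\chi\to0$ in the relevant direction. This is the crucial structural difference from Theorem \ref{main_result_first}, where the out-of-plane (bending) subspace produces an eigenvalue of order $|\chi|^2\varepsilon^2$ that forces the more delicate two-component decomposition and the presence of the $S_\alpha$, $\widetilde S_\alpha$ correctors; here no such corrector is needed, and the right-hand side on the homogenised side is simply $(S\widehat F_1,S\widehat F_2,0)^\top$, i.e. the planar average of the in-plane forcing.

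The key steps, in order: (i) Apply $\mathcal{F}_\varepsilon$ and reduce to a uniform-in-$\chi$, uniform-in-$\varepsilon$ fibrewise estimate
\[
\bigl\|(\varepsilon^{-\gamma}\mathcal{A}_\chi^\varepsilon+I)^{-1}f_\chi-(\varepsilon^{-\gamma}\mathcal{A}_\chi^{{\rm hom},2}+I)^{-1}\widetilde f_\chi\bigr\|_{L^2(Q,\mathbb{C}^3)}\le C\varepsilon^{(\gamma+2)/2}\|f_\chi\|_{L^2(Q,\mathbb{C}^3)},
\]
where $\widetilde f_\chi$ is the $\chi$-fibre of $(S\widehat F_1,S\widehat F_2,0)^\top$. (ii) Within the fibre, decompose $L^2(Q,\mathbb{C}^3)$ using the parity and identify the low-frequency part of the spectrum: show that on the even/even/odd subspace the fibre form is bounded below by $c(|\chi|^2+\varepsilon^2(\text{higher})) $ times the $L^2$-norm squared, using a $\chi$-dependent Korn inequality on the cell. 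The point is that membrane-type Korn inequalities on a periodicity cell with quasiperiodic conditions lose only a factor $|\chi|^{-2}$ (not $|\chi|^{-2}\varepsilon^{-2}$), which is exactly what makes the clean $\varepsilon^{(\gamma+2)/2}$ bound possible without a corrector. (iii) Construct the approximate solution via the standard two-scale ansatz $u_\chi\approx e_\chi(v_0+\varepsilon v_1+\dots)$ with $v_0\in\mathbb{R}^2$ independent of $y$ and $x_3$, $v_1$ built from the membrane cell correctors defining $\mathcal{L}^2$, plug into the fibre identity, and estimate the residual in $H^{-1}$ of the cell. (iv) Combine the coercivity of step (ii) with the residual bound of step (iii) via the second resolvent identity to get the fibrewise estimate, then integrate over $\chi$ and pull back by $\mathcal{F}_\varepsilon^*$.

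The main obstacle I expect is step (ii): establishing the $\chi$-uniform coercivity of the rescaled fibre form on the in-plane invariant subspace with the \emph{correct} power of $\varepsilon$. One must show that after removing the in-plane rigid-motion directions (the constants in $\mathbb{R}^2$, which are controlled by the $+I$ term in the resolvent), the symmetrised-gradient energy on the cell dominates the $L^2$-norm with a constant that is uniform in both $\varepsilon$ and $\chi$ — in particular that the out-of-plane component $u_3$ and the $y$- and $x_3$-oscillatory parts of $u_1,u_2$ are all controlled without picking up an $\varepsilon^{-2}$ factor. This requires a quantitative Korn-type inequality adapted to the thin cell $Q_{\rm r}\times I$ with quasiperiodic lateral conditions, uniform as $h/\varepsilon$ stays bounded; the parity assumption is what kills the bending mode and lets one avoid the degeneracy. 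Once this Korn inequality is in place, the rest (residual estimates, direct integral bookkeeping) is routine and parallels the proof of Theorem \ref{main_result_first}, but with the simplification that no $|\chi|\to0$ singular layer in $\chi$ has to be resolved, so the max-factor $\max\{\varepsilon^{(\gamma+2)/4-\delta},1\}$ and the $\varepsilon^{(\gamma+2)/4}$ (rather than $\varepsilon^{(\gamma+2)/2}$) rate from the first theorem do not appear here.
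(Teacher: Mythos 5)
Your proposal takes essentially the same approach as the paper: Floquet transform to fibre operators $\mathcal{A}_\chi$, parity-based identification of the in-plane invariant subspace, $\chi$-uniform Korn inequalities to separate the eigenvalue scales $|\chi|^4$ (bending) and $|\chi|^2$ (membrane), an asymptotic ansatz with cell correctors to build the approximate solution and estimate the residual, and then transfer to the operator norm. This is precisely the structure of the paper's Sections 3--5.1.2 and 6.2.2, and your identification of the structural advantage of the second subspace (the absence of the extra degeneracy that necessitates the $\varepsilon^{(\gamma+2)/4}$ and $\max\{\varepsilon^{(\gamma+2)/4-\delta},1\}$ factors in Theorem \ref{main_result_first}) is exactly the point the paper makes.

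One place where your step (iv) is under-specified is the passage from the $|\chi|$-scaled fibrewise estimates to the $\varepsilon^{-\gamma-2}$-scaled resolvent: you invoke ``second resolvent identity'' and ``direct integral bookkeeping'', but the paper's mechanism is a Riesz contour integral, writing $(\varepsilon^{-\gamma-2}\mathcal{A}_\chi+I)^{-1}=\frac{1}{2\pi i}\oint_\Gamma g_{\varepsilon,\chi}(\zeta)(\zeta I-|\chi|^{-2}\mathcal{A}_\chi)^{-1}\,d\zeta$ with $g_{\varepsilon,\chi}(\zeta)=(\varepsilon^{-\gamma-2}|\chi|^2\zeta+1)^{-1}$, together with an explicit maximisation over the scaled quasimomentum $\theta=\varepsilon^{-1}\chi$ showing the worst case $\varepsilon^{-\gamma}|\theta|^2\sim 1$ yields exactly the rate $\varepsilon^{(\gamma+2)/2}$. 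This also handles the discarded spectral projections onto the order-one eigenspace, whose contribution is shown to be of order $\varepsilon^{\gamma+2}$ and hence subdominant. Your scheme would reach the same conclusion, but without the contour representation you would need to re-derive the coercivity and residual estimates with the $\varepsilon$-dependent coefficient in place, keeping careful track of where in $\chi$ the error peaks; since you do not make this optimisation explicit, the $\varepsilon^{(\gamma+2)/2}$ rate is asserted rather than derived in your outline. Also, a minor point: the parenthetical ``lose only a factor $|\chi|^{-2}$ (not $|\chi|^{-2}\varepsilon^{-2}$)'' is imprecise after rescaling to the fixed cell $Q$ (all $\varepsilon$-factors have been absorbed); what you mean, and what the paper proves in \eqref{bbb15}, is that the membrane subspace has coercivity $C\nu|\chi|^2$ rather than the bending subspace's $C\nu|\chi|^4$.
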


Notice that the approximation error in (\ref{second_subspace_est}) is small as $\varepsilon\to0$ as long as $\gamma>-2.$ In particular, for $\gamma=0$ we obtain a version of the norm-resolvent estimates \cite{BirmanSuslina} for the two-dimensional ``zero-thickness'' plate. The operator ${\mathcal A}^{\rm hom, 2}$ in this case is related to the two-dimensional version of the``spectral germ'' of \cite{BirmanSuslina}. However, the estimates (\ref{first_subspace_est}) in the ``bending subspace" consisting of functions with the first two components odd and the third component even do not have an analogue in the existing literature and indeed require a more delicate analysis than the estimates (\ref{second_subspace_est}), which we carry out below under the umbrella of the general approach we introduce in this paper.

In the next statement we do not impose the symmetry conditions (\ref{symmetry}) on $A.$
\begin{theorem}
\label{main_result_general}
For each $\gamma\ge-2$ there exists $C=C(A)>0,$ independent of $F^h=(F_1, F_2, F_3),$ such that for all $\varepsilon>0$ the following estimates hold:
\begin{equation}
\begin{aligned}
&\Bigl\|R^h P_{\alpha}\left(\varepsilon^{-\gamma} \mathcal{A}^{\varepsilon}+I\right)^{-1}F^h\\[0.4em]
&-\Re\Bigl\{\left(P_{\alpha}-\varepsilon x_3\partial_{\alpha}P_3\right)\left(  \varepsilon^{-\gamma}\mathcal{A}^{\rm hom} +I \right)^{-1}\bigl(SR^hF_1, S R^h F_2, S R^h F_3+S_1 R^h F_1+S_2 R^h F_2  \bigr)^\top\Bigr\}\Bigr\|_{L^2(\Pi )}\\[0.7em]
& \hspace{+15ex}\leq C \varepsilon^{(\gamma+2)/2} \bigl\|R^h F^h\bigr\|_{L^2(\Pi,{\mathbb R}^3)}, \quad \alpha=1,2, \\[0.7em]
&\Bigl\| R^h P_{3}\left(\varepsilon^{-\gamma} \mathcal{A}^{\varepsilon}+I\right)^{-1}F^h\\[0.4em]
&-\Re\Bigl\{P_3\left(\varepsilon^{-\gamma}\mathcal{A}^{\rm hom} +I \right)^{-1}\bigl(S R^h F_1, S R^h F_2, S R^h F_3+S_1 R^hF_1+S_2 R^hF_2\bigr)^\top\Bigr\}\Bigr\|_{L^2(\Pi)}
\\[0.7em]
& \hspace{+15ex}
\leq C \varepsilon^{(\gamma+2)/4} \bigl\|R^h F^h\bigr\|_{L^2(\Pi,{\mathbb R}^3)}. 
\end{aligned}
\label{general_est}
\end{equation}

\end{theorem}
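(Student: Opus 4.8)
The plan is to pass to a Floquet--Bloch fibre representation of ${\mathcal A}^\varepsilon$, analyse each fibre operator near the bottom of its spectrum with the help of $\chi$-uniform Korn-type inequalities, and then reassemble the fibrewise resolvent estimates by integration over the quasimomentum $\chi\in Q'_{\rm r}$. First I would apply the transform ${\mathcal F}_\varepsilon$ of (\ref{starF}) together with the rescaling $x_3\mapsto x_3/h$ of the thickness variable (legitimate because $h=h(\varepsilon)\sim\varepsilon$), which makes $\varepsilon^{-\gamma}{\mathcal A}^\varepsilon$ unitarily equivalent to the direct integral $\int_{Q'_{\rm r}}^{\oplus}(\varepsilon^{-\gamma-2}{\mathcal B}_\chi)\,d\chi$, where ${\mathcal B}_\chi$ is the operator on $L^2(Q,{\mathbb C}^3)$, $Q=Q_{\rm r}\times I$, generated by the sesquilinear form $\int_Q A(y)\,{\rm sym}(\nabla^\chi U):\overline{{\rm sym}(\nabla^\chi\Phi)}$ on $Q_{\rm r}$-periodic vector fields, $\nabla^\chi$ denoting the anisotropic ``scaled quasigradient'' whose in-plane part is $\partial_{y_\alpha}+{\rm i}\chi_\alpha$ and whose out-of-plane part carries the bounded factor $\varepsilon/h$. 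In these variables the averaging operators $S$, $\widetilde S_\alpha$ and the corrector $\varepsilon x_3\partial_\alpha P_3$ occurring in (\ref{general_est}) become, respectively, the orthogonal projection onto the modes constant in $y$, its first $x_3$-moment, and multiplication by ${\rm i}\chi_\alpha x_3$; the last is precisely the leading cell corrector in the Kirchhoff--Love ansatz $U_3\approx w$, $U_\alpha\approx-{\rm i}\chi_\alpha x_3\,w$ underlying the variational definition of ${\mathcal L}$ and of ${\mathfrak I}$.

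The next step is the spectral analysis of ${\mathcal B}_\chi$ for small $|\chi|$. The kernel of ${\mathcal B}_0$ is the three-dimensional space of constant vector fields; switching on $\chi$ lifts it to a three-dimensional ``almost-null'' subspace ${\mathcal N}_\chi$ that decomposes into a two-dimensional ``membrane'' block (in-plane constants corrected by $O(|\chi|)$ cell functions, with ${\mathcal B}_\chi$-energy $\asymp|\chi|^2$) and a one-dimensional ``bending'' block (the out-of-plane constant together with the ${\rm i}\chi_\alpha x_3$ corrector and higher cell functions, energy $\asymp|\chi|^4$), whereas on ${\mathcal N}_\chi^{\perp}$ one has ${\mathcal B}_\chi\ge c>0$ uniformly in $\chi$. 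On ${\mathcal N}_\chi$ the restriction of ${\mathcal B}_\chi$ agrees, to leading order in $\chi$, with the fibre symbol ${\mathcal B}^{\rm eff}_\chi$ of ${\mathcal A}^{\rm hom}$, normalised so that $\varepsilon^{-\gamma-2}{\mathcal B}^{\rm eff}_\chi$ represents the fibre of $\varepsilon^{-\gamma}{\mathcal A}^{\rm hom}$ at quasimomentum $\chi$ --- this is what the definitions of ${\mathcal L}$, ${\mathcal L}^1$, ${\mathcal L}^2$ encode --- the discrepancy being of higher order in $\chi$; since the symmetry conditions (\ref{symmetry}) are not assumed, the membrane and bending blocks couple through the entries $A_{\alpha\beta\gamma3}$ and $A_{\alpha333}$ already at the first order of this discrepancy, and this coupling accounts for the weaker exponents for the out-of-plane component in (\ref{general_est}).

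The analytic core, and the step I expect to be the main obstacle, is a Korn inequality for $\nabla^\chi$ uniform in $\chi$ and $\varepsilon$ that reflects the degeneracy as $|\chi|\to0$: one needs $\|U-R_\chi U\|_{L^2(Q)}\le C\|{\rm sym}(\nabla^\chi U)\|_{L^2(Q)}$ for $U\in H^1_{\#}(Q,{\mathbb C}^3)$, where $R_\chi$ is the orthogonal projection onto ${\mathcal N}_\chi$, together with the sharp weighted bounds $\|R_\chi^{\rm mem}U\|_{L^2}\le C|\chi|^{-1}\|{\rm sym}(\nabla^\chi U)\|_{L^2}$ and $\|R_\chi^{\rm bend}U\|_{L^2}\le C|\chi|^{-2}\|{\rm sym}(\nabla^\chi U)\|_{L^2}$, with $C$ independent of $\chi$ and $\varepsilon$; this is the plate analogue --- with the geometric anisotropy between in-plane and out-of-plane directions built in --- of the Poincar\'{e}-type inequalities of \cite{ChCoARMA}, \cite{Quasi_Cooper}, and proving it requires tracking the membrane and bending scalings and the anisotropy of $\nabla^\chi$ at the same time. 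Granting such inequalities, for each fixed $\chi$ I would compare $(\varepsilon^{-\gamma-2}{\mathcal B}_\chi+I)^{-1}$, acting on the rescaled data, with $(I+{\mathcal K}_\chi)(\varepsilon^{-\gamma-2}{\mathcal B}^{\rm eff}_\chi+I)^{-1}(I+{\mathcal K}'_\chi)$, where ${\mathcal K}_\chi$ and ${\mathcal K}'_\chi$ are the fibre counterparts of the output corrector $-\varepsilon x_3\partial_\alpha P_3$ and of the input corrections carried by the $\widetilde S_\alpha$-terms in (\ref{general_est}): the spectral gap on ${\mathcal N}_\chi^{\perp}$ disposes of the non-resonant part with an $O(\varepsilon^{\gamma+2})$ error, while on ${\mathcal N}_\chi$ the weighted Korn bounds are used to absorb the resolvent blow-up near $\chi=0$ against the higher-order discrepancy of the two symbols and against the membrane--bending coupling, yielding a fibrewise bound of the form $C(|\chi|\,\|(\varepsilon^{-\gamma-2}{\mathcal B}_\chi+I)^{-1}\|+\varepsilon^{\gamma+2})$ for the in-plane components and a correspondingly weaker one for the out-of-plane component.

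Finally I would integrate the squared fibrewise errors over $\chi\in Q'_{\rm r}$ using the Parseval identity for ${\mathcal F}_\varepsilon$. Splitting the integration domain at the resonance radii $|\chi|\sim\varepsilon^{(\gamma+2)/2}$ (membrane) and $|\chi|\sim\varepsilon^{(\gamma+2)/4}$ (bending) and carrying out there the substitutions $\chi=\varepsilon^{(\gamma+2)/2}\eta$ and $\chi=\varepsilon^{(\gamma+2)/4}\eta$ converts the $\chi$-integrals into $\varepsilon$-independent convergent integrals; collecting the resulting powers of $\varepsilon$ gives $\max\{\varepsilon^{(\gamma+2)/2},\varepsilon^{3(\gamma+2)/4-1}\}$ for the in-plane components and $\varepsilon^{\gamma/4}$ for the out-of-plane component of (\ref{general_est}), the non-resonant contributions being only $O(\varepsilon^{\gamma+2})$ and thus absorbed. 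Undoing ${\mathcal F}_\varepsilon$ and the thickness rescaling returns the estimates in $L^2(\Pi^h,{\mathbb R}^3)$. The same scheme, specialised under (\ref{symmetry}) --- where the membrane--bending coupling vanishes and the two blocks become invariant subspaces distinguished by parity in $x_3$ --- yields the sharper Theorems \ref{main_result_first} and \ref{main_result_second}.
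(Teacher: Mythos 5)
Your fibre-level plan coincides with the paper's: Floquet decomposition of $\varepsilon^{-\gamma}\mathcal{A}^\varepsilon$ into $\varepsilon^{-\gamma-2}\mathcal{A}_\chi$, the $\chi$-uniform Korn inequalities with the $|\chi|^{-1}$ (membrane) and $|\chi|^{-2}$ (bending) weights (this is precisely Lemma~3.1 and the bounds (\ref{cetvrta})--(\ref{peta})), the three-dimensional almost-null subspace splitting into a two-dimensional membrane block of energy $\asymp|\chi|^2$ and a one-dimensional bending block of energy $\asymp|\chi|^4$ (the content of (\ref{bbb0})--(\ref{bbb4})), and the Riesz contour argument to discard the $\mathcal{N}_\chi^\perp$ contribution at $O(\varepsilon^{\gamma+2})$. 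Your remark that the lack of (\ref{symmetry}) couples the membrane and bending blocks at first order, producing the weaker exponents, is also the right explanation. The paper realises the ``comparison of $\mathcal{B}_\chi$ with $\mathcal{B}^{\rm eff}_\chi$ on $\mathcal{N}_\chi$'' via a fully explicit multi-step recurrence of cell problems and corrections $m^{(1)},m^{(2)},\dots$, ${\mathfrak u}_k^{(j)}$ (Section~5.2), and it uses two distinct scalings of the fibre ($|\chi|^{-4}$ for the bending eigenvalue, $|\chi|^{-2}$ for the two membrane eigenvalues) each with its own corrector chain, rather than a single block-diagonalisation; your sketch would have to be fleshed out into a comparable term-by-term expansion to actually extract the sharp fibre bound (korrre1oo2) with the right $|\chi|$-powers on each component.

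The genuine gap is in the final step. You propose to ``integrate the squared fibrewise errors over $\chi\in Q'_{\rm r}$ using the Parseval identity'' and then split the domain at the resonance radii and change variables. For the $L^2\to L^2$ operator-norm estimate this is the wrong reduction: after Floquet decomposition the difference of resolvents is a direct-integral operator, and its operator norm equals the essential supremum over $\chi$ of the fibre operator norms, not any $\chi$-integral of the error envelope. Parseval only tells you $\|T_\varepsilon F\|^2=\int\|T_\chi F_\chi\|^2\,d\chi$; to bound $\|T_\varepsilon\|_{\rm op}$ you must take $\sup_\chi\|T_\chi\|$, and the supremum is attained when data concentrate near the resonance radius. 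This is exactly what the paper does (``the largest error is attained when $\varepsilon^{2-\gamma}|\theta|^4\sim1$'' in Section~6.1.1, and ``the overall error is then the maximum of the two'' in Section~6.2). If one actually carries out the $\chi$-integral of the squared envelope, the resulting power of $\varepsilon$ differs from the supremum (for the bending block it would produce $\varepsilon^{(\gamma+2)/2}$ instead of the correct $\varepsilon^{(\gamma+2)/4}$-scale quantity), so the exponents in (\ref{general_est}) do not follow from the procedure you describe; they come out only if you replace the integration by a pointwise maximisation of the fibre error over $\chi$.
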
 

When $A$ satisfies the condition (\ref{symmetry}), we are able to give a more precise estimate in each invariant subspace and include the dependence on $\delta,$ which is important for the plate theory. In the case of bounded plates one usually takes $\delta=1$, see Remark \ref{expl3} for the meaning of $\gamma$, $\delta.$

\section{Strategy for the proofs and the novelty of the approach} 
\label{strategynovelties} 
In this chapter we introduce the Floquet transform, briefly discuss our strategy of the proofs of the main results stated in the previous section, and describe in what way our approach is new. 

The Floquet transform allows one to represent the elasticity operator (more precisely, its resolvent) in terms of simpler operators, which can then be analysed. Namely, we rewrite (\ref{original_identity}) on the scaled domain $\Pi$
with the solution $\widetilde{U}\in H^1(\Pi, {\mathbb R}^3),$ as follows:
\begin{align}
\int_\Pi A\biggl(\dfrac{x_1}{\varepsilon}, \dfrac{x_2}{\varepsilon}\biggr) \,{\rm sym}\nabla^\varepsilon \widetilde{U}:{\rm sym}\nabla^\varepsilon \Psi+\int_\Pi\widetilde{U}\cdot\Psi
&=\int_\Pi\widetilde{F}^\varepsilon\cdot\Psi\quad\quad\forall\Psi\in H^1(\Pi, {\mathbb R}^3),\label{original2}
\end{align}
where 
for each $\varepsilon$ the force density $\widetilde{F}^\varepsilon$ is given by  $\widetilde{F}^\varepsilon(x)=F^\varepsilon(x_1, x_2, \varepsilon x_3),$ the solution 
$\widetilde{U}$ is related to the solution $U^{h, \varepsilon}$ of (\ref{original_identity}) by a similar formula $\widetilde{U}(x)=U^{h, \varepsilon}(x_1, x_2, \varepsilon x_3),$
and $\nabla^\varepsilon=(\partial_{1}, \partial_{2}, \varepsilon^{-1}\partial_{3})$ is the corresponding rescaling of the gradient.

For $\chi\in Q'_{\rm r},$ we define the space $H^1_\chi(Q, {\mathbb C}^3)$ as the closure in $H^1(Q, {\mathbb C}^3)$ of the set of smooth functions $u$ on $\overline{\Pi}$ that are $\chi$-quasiperiodic with respect to $y_1,$ $y_2,$ {\it i.e.} 
\[
u(y+l_\alpha, x_3)=u(y, x_3)\exp({\rm i}\chi_\alpha),\ \ \ \ \ l_\alpha:=(\delta_{\alpha j})_{j=1}^3,\ \ \ \ \ \alpha=1,2,\qquad {y\in{\mathbb R}^2},\quad x_3\in I,
\]
where $\delta_{ij},$ $i,j=1,2,3,$ is the Kronecker delta. It is easily seen that $\chi$-quasiperiodic functions on $Q,$ are products of $Q_{\rm r}$-periodic functions and the exponent $e_\chi$ (see the end of Section \ref{intro_section}).

The Floquet transform ${\mathcal F}_\varepsilon$ defined by (\ref{Floquet_scalar}) has the following properties: 
\begin{enumerate} 
\item  It is a unitary transform on $L^2(\Pi, {\mathbb C}^3)$ with values in $L^2(Q \times Q_r',\mathbb{C}^3)$; 
\item  For fixed $\chi \in Q_r'$, the function ${\mathcal F}_\varepsilon \widetilde{U}(\cdot,\cdot, \chi)$ is quasiperiodic with respect to $y_1,y_2$;
\item  
For every $\widetilde{U} \in H^1 (\Pi,{\mathbb C}^3),$ one has
$$({\mathcal F}_\varepsilon) (\partial_{\alpha} \widetilde{U})(y,x_3,\chi)=\varepsilon^{-1}  \partial_{y_{\alpha}} ({\mathcal F}_\varepsilon \widetilde{U})(y,x_3,\chi), \ \alpha=1,2, \qquad  ({\mathcal F}_\varepsilon) (\partial_{x_3} \widetilde{U})(y,x_3,\chi)= \partial_{x_3} ({\mathcal F}_\varepsilon \widetilde{U})(y,x_3,\chi).$$  
\item   
The $\varepsilon$-problems can  be decomposed (after transformation $\mathcal{F}_{\varepsilon}$), using von Neumann direct integral. For every $\gamma \in \mathbb{R},$ one has
\begin{equation}
R^h\left(\varepsilon^{-\gamma} \mathcal{A}^{\varepsilon}+I\right)^{-1}={\mathcal F}_\varepsilon^*\left(\oplus \int_{Q'_{\rm r}}\left(\varepsilon^{-\gamma-2} \mathcal{A}_\chi+I\right)^{-1}d\chi\right) {\mathcal F}_\varepsilon R^h,
\label{vonNeumann1}
\end{equation}
where the operator $\mathcal{A}_{\chi}$ is defined 
in $L^2(Q,\mathbb{C}^3)$ by 
the sesquilinear form ({\it cf.} (\ref{general_problemrevision}))
\begin{equation}
{{\mathfrak b}_\chi}(u,\varphi):=\int_QA\,\,{\rm sym}\nabla u:\overline{{\rm sym}\nabla\varphi},\qquad u, \varphi\in H^1_{\chi} (Q,\mathbb{C}^3).
\label{bform}
\end{equation}
\end{enumerate}

Applying the transform ${\mathcal F}_\varepsilon$ to (\ref{original2}) yields a family of problems on 
$Q$
parametrised by $\varepsilon$ and ``quasimomentum'' $\chi\in Q'_{\rm r}:$
\begin{align}
\frac{1}{\varepsilon^2}\int_Q  A \ {\rm sym} \nabla u:\overline{ {\rm sym} \nabla\varphi}+ \int_Q  u\cdot\overline{\varphi}
=\int_Qf\cdot\overline{\varphi}\quad\quad\forall\varphi\in H^1_\chi(Q, {\mathbb C}^3).
\label{general_problemrevision}
\end{align}
The right-hand side $f\in L^2(\Pi, {\mathbb C}^3)$ in (\ref{general_problemrevision}) and the solution $u\in H^1_\chi(Q, {\mathbb C}^3)$ are the Floquet transforms of the right-hand side $\widetilde{F}^\varepsilon$ and the solution $\widetilde{U}$ in (\ref{original2}), {\it i.e.} $f={\mathcal F}_\varepsilon\widetilde{F}^\varepsilon,$ $u={\mathcal F}_\varepsilon\widetilde{U},$ where for brevity we do not indicate the dependence of $f,$ $u$ on the parameter $\varepsilon.$

Next we will describe our approach to proving the results formulated in Section \ref{formulation}. First, in Sections \ref{Korn_section}, \ref{structure} we show that the smallest eigenvalue of the operator $\mathcal{A}_{\chi}$ is of order $|\chi|^4$, the next two eigenvalues are of order $|\chi|^2,$ and the remainin eigenvalues are of order one, see Lemma \ref{revlemma}. The discrepancy in the order of the smallest eigenvalues is the consequence of the anisotropy of the space $H^1_\chi(Q, {\mathbb C}^3)$ (as the periodicity conditions are imposed only in $y$ variable). The same observation shows why the plate problem cannot be tackled by the techniques developed in \cite{BirmanSuslina}, where the regularity of the so called "spectral germ" implies that the smallest eigenvalues of the corresponding $\chi$-problem are all of order $|\chi|^2$, see Remark \ref{revrem} below. In the present paper we develop an original approach for obtaining operator norm-resolvent estimates, by invikong $\chi$-dependent asymptotic expansions (see Section \ref{casegeneltens}, where our approach is compared to that of \cite{ChCoARMA}). Our technique can also be used for the problems analysed in \cite{BirmanSuslina} in a relatively simple way, see part 2 of Remark \ref{f3_omitted} below. We also point out that the presence of an eigenvalue of order $|\chi|^4$ in addition to eigenvalues of order $|\chi|^2$ makes the computations more difficult (see Remark \ref{rem53} below) and requires that the final estimates are given with respect to the parameters $\gamma$ and $\delta$, which does not fit into the framework of \cite{BirmanSuslina} (see Remark \ref{expl3}).  

Once the problem is decomposed into simpler problems using the formula \eqref{vonNeumann1}, it is important to notice that the eigenspaces associated with order one eigenvalues of the operator $\mathcal{A}_{\chi}$ are not important, if one is interested in the $\varepsilon^2$-precision of the resolvent. This is a consequence of the scaling (see also the first term in \eqref{general_problemrevision}). Thus, for every $\chi$ we only need to approximate the space spanned by the first three smallest eigenvalues of 
 $\mathcal{A}_{\chi}$ (see also how  the estimates obtained in Section \ref{asymptotic_proc} are used in Section \ref{normresolvent} to estimate the resolvent in the operator norm). Also, the limit homogenised operator (looked at for every value of the parameter $\chi$) acts on a finite-dimensional space, see  \eqref{sinisa1000revision},\eqref{sinisa3*second}, \eqref{korekcija11111} below. This is precisely what makes the Floquet transform useful in the analysis of these problems.  It is also important to notice that not all values of $\chi$ play the same role in the estimates of the resolvent. Namely, again due to scaling, only the ``fibres'' $\mathcal{A}_{\chi}$ for $\chi$ in a neighbourhood of origin contribute to the resolvent estimate as for $|\chi|>c$, for some $c>0,$ the smallest eigenvalue of $\mathcal{A}_{\chi}$ is of order one, see also Section \ref{normresolvent}. 

In order to estimate the resolvent of $\mathcal{A}_{\chi}$ we use asymptotic procedure in Section \ref{asymptotic_proc}. However, it would be very cumbersome (and unnatural) to develop this asymptotics directly in a uniform way with respect to the parameter $\varepsilon$. Instead, the asymptotics is approached by scaling the operator $\mathcal{A}_{\chi}$ with $|\chi|^{-4}$ and then with $|\chi|^{-2}$, see \eqref{jdba10_new}, \eqref{jdba10_newsecond}, as well as \eqref{scalingjedan} and \eqref{scalingdva}. This is natural as a consequence of apriori estimates on the least eigenvalues of $\mathcal{A}_{\chi}$.  As a result, the error itself is then also dependent on $|\chi|,$ which however does not prevent us from obtaining uniform estimates on the resolvent in Section \ref{normresolvent}, by virtue of the fact that the values of the resolvent of $\mathcal{A}_{\chi}$ for different $\chi$ do not have the same importance in the uniform estimate of the resolvent of $\mathcal{A}^{\varepsilon}$. 

In our asymptotic procedure we single out the case when the elasticity tensor satisfies planar symmetries (\ref{symmetry}). 
Although it is not necessary from the technical point of view, this assumption 
makes the asymptotic procedure more straightforward and provides an idea on how to proceed in the general case. Moreover, as already mentioned, under the assumption (\ref{symmetry}) one obtains sharper estimates, as the two invariant subspaces can be treated separately. This, in turn, allows us to express the final result in one of them, namely the ``bending" subspace, with the forces scaled in the way it is usually done in linear plate theory, see \cite{ciarlet}.  

We emphasize the fact that the asymptotic procedure would be more straighforward if all the lowest eigenvalues of the operator $\mathcal{A}_{\chi}$ were of the same order $|\chi|^2$ as in \cite{BirmanSuslina}, and the analysis presented in Section \ref{section_drugaasimptotika} would suffice. 
We also emphasize the fact that we do not require any smoothness of the coefficients of the elasticity tensor (they are only assumed to be in $L^{\infty}$). 

We next summarise the main steps of our approach, which can be used for solving other problems (see also \cite{CDVZ} the remark 3 in Section \ref{concl_sec}): 
\begin{enumerate}
\item Using Korn inequalities for quasiperiodic function in Section \ref{Korn_section}, we obtain the estimates on the smallest eigenvalues for the operator $\mathcal{A}_{\chi}$, see Proposition \ref{propcorrection}; 

\item  By applying an asymptotic development in $|\chi|$ we obtain optimal estimates for the resolvents of the operators $|\chi|^{-2}\mathcal{A}_{\chi}$ and $1/|\chi|^{-4}\mathcal{A}_{\chi}$ with respect to the approximating operator that corresponds to the standard linear Kirchoff plate model. This operator acts on an finite-dimensional subspace of $H^1_{\chi}(Q,\mathbb{C}^3)$. This is implemented in Section \ref{asymptotic_proc}; 

\item Using $\chi$- dependent estimates for the resolvents of the operators  $|\chi|^{-2}\mathcal{A}_{\chi}$ and $|\chi|^{-4} \mathcal{A}_{\chi}$ and the property 4 above, we obtain an estimate for the resolvent of the operator $\varepsilon^{-\gamma} \mathcal{A}^{\varepsilon}$. This is carried out in Section \ref{normresolvent}. The estimates are given in terms of the $L^2 \to L^2$ and $L^2 \to H^1$ norms. 
\end{enumerate} 	
We also  include Section \ref{apriori_sec}, which is not necessary to obtain the resolvent estimates but is helpful for a better understanding of the approximating operator and the asymptotic procedure in Section \ref{asymptotic_proc}, see Remark \ref{expl2} and Section \ref{rgamma}, where the estimates on the constants appearing on the right-hand side of the resolvent estimate are discussed. It may be worth making a technical remark that while in our analysis we chose to use the Floquet version of the transform representing the operator of the original equation (\ref{general_eq}) as a direct integral, see (\ref{vonNeumann1}), it could be similarly carried out in the framework of the Gelfand transform \cite{Gelfand}, with appropriate straightforward modifications of the Korn inequalities of Section \ref{Korn_section} and the asymptotic recurrence procedure of Section \ref{asymptotic_proc}.


\section{Auxiliary results}
\label{aux_sec}

\subsection{Korn inequalities}
\label{Korn_section}
We next establish several Korn-type inequalities, which will inform us in Section \ref{apriori_sec} about the structure of solutions to the problem (\ref{general_problemrevision}).
\begin{lemma}
\label{neN_est}
There exists a constant $C>0$ such that for all $\chi\in Q'_{\rm r}$ and $u=(u_1, u_2, u_3)^\top\in H^1_\chi(Q, {\mathbb C}^3),$ 
there are $a_1, a_2, c_1, c_2, c_3 \in \mathbb{C}$ satisfying the estimates
\begin{align} 
\label{prva} \|u_\alpha- a_\alpha x_3-c_\alpha \|_{H^1(Q, {\mathbb C})} &\leq  C\bigl\| {\rm sym}\nabla u\bigr\|_{L^2(Q, {\mathbb C}^{3\times3})},\qquad \alpha=1,2,\\[0.4em]
\label{treca} \|u_3+ a_1y_1+a_2y_2-c_3 \|_{H^1(Q, {\mathbb C})} &\leq  C\| {\rm sym}\nabla u \|_{L^2(Q, {\mathbb C}^{3\times3})}, \\[0.6em]
\label{cetvrta}\max\bigl\{ |a_1|, |a_2|, |c_1|, |c_2|\bigr\} &\leq  C\vert\chi\vert^{-1}
\bigl\| {\rm sym}\nabla u\bigr\|_{L^2(Q, {\mathbb C}^{3\times3})}, \\[0.5em]
\label{peta}| c_3| &\leq C 
\vert\chi\vert^{-2}\bigl\| {\rm sym}\nabla u\bigr\|_{L^2(Q, {\mathbb C}^{3\times3})}, \\[0.5em]
\label{sesta}\bigl|\left(\exp({\rm i}\chi_\alpha)-1\right) c_3+a_\alpha\bigr| &\leq  C\bigl\| {\rm sym}\nabla u\bigr\|_{L^2(Q, {\mathbb C}^{3\times3})},\quad\alpha=1,2.
\end{align} 
\end{lemma}
\begin{proof}
The estimates \eqref{prva}--\eqref{treca} are obtained by using Korn's inequality, as follows. The standard ``second'' Korn inequality, see {\it e.g.} \cite[Theorem 2.5]{OShY}, gives 
\begin{eqnarray} 
	\|u_1- a_1x_3-dy_2-c_1 \|_{H^1(Q, {\mathbb C})} &\leq & C\bigl\| \textrm{sym}\nabla u\bigr\|_{L^2(Q, {\mathbb C}^{3\times3})}, \label{one}\\[0.5em]
	\|u_2- a_2x_3+dy_1-c_2 \|_{H^1(Q, {\mathbb C})} &\leq & C\bigl\| \textrm{sym}\nabla u\bigr\|_{L^2(Q, {\mathbb C}^{3\times3})}, \nonumber\\[0.5em]
	\|u_3+ a_1y_1+a_2y_2- c_3 \|_{H^1(Q, {\mathbb C})} &\leq & C\bigl\| \textrm{sym}\nabla u\bigr\|_{L^2(Q, {\mathbb C}^{3\times3})},\label{three} 
\end{eqnarray}
where 
\begin{equation*}
\begin{aligned}
a_\alpha&:=\int_Q(\partial_3 u_\alpha-\partial_\alpha u_3), \quad \alpha=1,2,
 \qquad d:=\int_Q(\partial_2 u_1-\partial_1 u_2),\\[0.4em] 
c_1&:=\int_Qu_1-d,\quad c_2:=\int_Qu_2+d,\quad c_3=\int_Qu_3+a_1+a_2.
\end{aligned}   
\end{equation*}           
Furthermore, by a variation of the proof of the ``first'' Korn inequality, see 
{\it e.g.} \cite[Theorem 2.1]{OShY}, the following estimate is valid for quasiperiodic functions on $Q_{\rm r}:$
\begin{equation} 
\label{kornquasi} 
\bigl\| \nabla u\bigr\|_{L^2(Q_{\rm r}, {\mathbb C}^{2\times 2})} \leq C\bigl\| {\rm sym} \nabla u\bigr\|_{L^2(Q_{\rm r}, {\mathbb C}^{2\times2})},\quad u\in H^1_\chi(Q_{\rm r}, {\mathbb C}^2),\quad \chi\in Q'_{\rm r},
\end{equation}
where the constant $C$ is independent of the quasimomentum $\chi,$ and $H^1_\chi(Q_{\rm r}, {\mathbb C}^2)$ is defined similarly to $H^1_\chi(Q, {\mathbb C}^3).$ Noting that 
\[
d= \int_{Q_{\rm r}}\left\{ \partial_2 \int_I u_1d\,x_3-\partial_1 \int_I u_2\,dx_3 \right\}dy_1dy_2,
\]
and applying \eqref{kornquasi} to the vector  
\[
\overline u:=\left(\int_I u_1\,dx_3,\int_I u_2\,dx_3 \right)^\top: Q_{\rm r} \to \mathbb{R}^2,
\]
we infer that 
\[
|d| \leq C\bigl\| {\rm sym} \nabla \overline{u}\bigr\|_{L^2(Q_{\rm r}, {\mathbb C}^{3\times3})} \leq C\bigl\|{\rm sym} \nabla u\bigr\|_{L^2(Q, {\mathbb C}^{3\times 3})}, 
\]
which in combination with (\ref{one})--(\ref{three}) yields (\ref{prva})--(\ref{treca}). Notice that  the same argument proves 
the estimates (\ref{prva})--(\ref{sesta}) with $a_\alpha,$ $\alpha=1,2,$ as defined above, as long as they hold with 
\[ 
a_\alpha=\int_{Q}\partial_3 u_\alpha,\quad \alpha=1,2.
\]

The estimates (\ref{cetvrta})--(\ref{sesta}) are derived by invoking the quasiperodicity in the directions $y_1, y_2,$ as follows. Notice first that from  (\ref{prva})--(\ref{treca}), using the trace inequality and the fact that $u$ is $\chi$-quasiperodic, one has
\begin{align} 
&\int\limits_{\{(y_1, y_2, x_3)\in Q: y_\beta=1\}}\Bigl|\bigl(\exp{({\rm i}\chi_\beta)} -1\bigr) (a_\alpha x_3+c_\alpha)\Bigr|^2dy_2dy_3\leq C\bigl\| {\rm sym} \nabla u\bigr\|^2_{L^2(Q, {\mathbb C}^{3\times3})},\quad \alpha,\beta=1,2,
\label{dvan}\\[0.6em]
&\int\limits_{\{(y_1, y_2, x_3)\in Q: y_1=1\}}\Bigl|\bigl(\exp{({\rm i}\chi_1)} (a_2y_2- c_3)-(a_1+a_2y_2- c_3)\bigr)\Bigr|^2dy_2dy_3\leq  C\bigl\| {\rm sym} \nabla u\bigr\|^2_{L^2(Q,  {\mathbb C}^{3\times3})}, \label{sesn}
\\[0.6em]
&\int\limits_{\{(y_1, y_2, x_3)\in Q: y_2=1\}}\Bigl|\bigl(\exp{({\rm i}\chi_2)} (a_1y_1- c_3)-(a_1y_1+a_2-c_3)\bigr)\Bigr|^2dy_1dy_3\leq  C\bigl\| {\rm sym} \nabla u\bigr\|^2_{L^2(Q,  {\mathbb C}^{3\times3})}.
 \label{sedamn}
\end{align}
Furthermore, there exist constants $C_1,C_2>0$ such that 
 \begin{equation*} 
 C_1|\chi_{\alpha}| \leq\bigl|\exp{({\rm i}\chi_{\alpha})} -1\bigr| \leq C_2 |\chi_{\alpha}|, \quad\quad 
 \alpha=1,2. 
 \end{equation*}
The estimates \eqref{cetvrta}--\eqref{sesta} are now obtained as a direct consequence of \eqref{dvan}--\eqref{sedamn}. 
\end{proof}

\subsection{Structure of the leading-order field}
\label{structure}

Taking into account 
\eqref{peta}, 
we infer that \eqref{sesta}
 are equivalent to the estimates
\begin{equation}
\label{jdba1}
|{\rm i}\chi_\alpha c_3+a_\alpha| \leq C\bigl\| {\rm sym} \nabla u\bigr\|_{L^2(Q, {\mathbb C}^{3\times 3})}, 
\qquad\alpha=1,2,
\end{equation}
In particular, from \eqref{prva}--\eqref{treca}, \eqref{jdba1}
we obtain
\begin{align}
\label{janprva}  \| u_\alpha-(c_\alpha-{\rm i}\chi_\alpha c_3 x_3)e_\chi\|_{H^1(Q, {\mathbb C})}&\leq C\bigl\| {\rm sym} \nabla u\bigr\|_{L^2(Q, {\mathbb C}^{3\times3})},\qquad\alpha=1,2,\\[0.65em]
\label{jantreca} \| u_3-c_3e_\chi\|_{H^1(Q, {\mathbb C})}&\leq  C\bigl\| {\rm sym} \nabla u\bigr\|_{L^2(Q, {\mathbb C}^{3\times3})},
\end{align}
where $c_j,$ $j=1,2,3,$ satisfy (\ref{peta})--(\ref{sesta}).

\begin{remark}
Notice that the leading-order term $e_\chi(c_1-{\rm i}\chi_1c_3x_3, c_2-{\rm i}\chi_2c_3x_3, c_3)^{\top}$ for $u$ 
 is the Floquet-transformed representation of the classical linear Kirchhoff-Love ansatz \cite[Theorem 1.10-1]{ciarlet}. 
\end{remark}


Next, notice that for $\chi\in Q'_{\rm r}$ 
\begin{equation}
 \label{antonio2}
\begin{aligned}
&
\bigl\| {\rm sym}\nabla (c_1,c_2,0)^\top e_{\chi}\bigr\|_{L^2(Q, {\mathbb C}^{3\times3})}=\frac{1}{2}\Bigl(|c_1\chi_1+c_2\chi_2|+|\chi| \bigl|(c_1,c_2)^\top\bigr|\Bigr)\le  |\chi| \bigl|(c_1,c_2)^\top\bigr|
\qquad \forall c_1, c_2\in{\mathbb C},\\[0.4em]
& \bigl\| {\rm sym}\nabla (-{\rm i}\chi_1x_3,-{\rm i}\chi_2 x_3, 1)^\top e_{\chi}\bigr\|_{L^2(Q, {\mathbb C}^{3\times3})}=\frac{|\chi|^2}{\sqrt{24}}.
\end{aligned}
\end{equation}	

	It follows from the estimates \eqref{janprva}--\eqref{antonio2}, by examining appropriate Rayleigh quotients (see (\ref{max_min}) below), that the smallest eigenvalue of $\mathcal{A}_{\chi}$ is of order $|\chi|^4,$ there are also two eigenvalues of order $|\chi|^2,$ and the remaining eigenvalues are of order $1$ or higher. 
Indeed, as a consequence of \eqref{cetvrta}, \eqref{peta}, \eqref{janprva}--\eqref{antonio2} as well as coercivity and boundedness of the tensor $A,$ the following proposition holds. 
\begin{proposition}
\label{propcorrection}
There exists $C>0$ 
such that for all $\chi\in Q'_{\rm r}$ the following bounds hold:
 \begin{align} 
 	& \label{bbb0} {\mathfrak b}_{\chi} (u,u)\ge C\nu|\chi|^4 \| u\|^2_{L^2(Q,\mathbb{C}^3)} \quad \forall u \in H^1_{\chi}(Q,\mathbb{C}^3), \\[1.0em]
 	& {\mathfrak b}_{\chi}\left( e_\chi(-{\rm i}\chi_1c_3x_3,-{\rm i}\chi_2 c_3 x_3,c_3)^\top, e_\chi(-{\rm i}\chi_1c_3x_3,-{\rm i}\chi_2 c_3 x_3,c_3)^\top e_{\chi} \right)\leq(24\nu)^{-1}|\chi|^4 |c_3|^2\quad \quad \forall c_3 \in \mathbb{C}, \label{bbb1}\\[1.0em]
        & {\mathfrak b}_{\chi} (u,u)\ge C\nu|\chi|^2 \| u\|^2_{L^2(Q,\mathbb{C}^3)} \quad \forall u \in H^1_{\chi}(Q,\mathbb{C}^3)\quad
        \textrm{such that }\ u\cdot e_\chi(-{\rm i}\chi_1 x_3,-{\rm i}\chi_2 x_3,1)^{\top}=0, \label{bbb15} \\[1.0em]
 &{\mathfrak b}_{\chi}\left(e_\chi(c_1-{\rm i}\chi_1 x_3 c_3,c_2-{\rm i}\chi_2 x_3 c_3,c_3)^\top, e_\chi(c_1-{\rm i}\chi_1 x_3 c_3,c_2-{\rm i}\chi_2 x_3 c_3,c_3)^\top \right)\nonumber
 \\[0.7em] 
&  \hspace{+30ex} \leq \nu^{-1}|\chi|^2\left(\bigl|(c_1,c_2)^\top\bigr|^2
+|\chi|^2|c_3|^2/24 \right) \quad\forall c_1,c_2,c_3 \in \mathbb{C}, \label{bbb3}  \\[1.0em]
 & {\mathfrak b}_{\chi}(u,u)\geq C\|u\|^2_{L^2(Q, \mathbb{C}^3)} \quad \forall u \in  H^1_{\chi}(Q,\mathbb{C}^3) \nonumber \\[0.7em]
& \label{bbb4} \hspace{+5ex} \textrm{ such that }\ u\cdot e_\chi(-{\rm i}\chi_1 x_3,-{\rm i}\chi_2 x_3,1)^{\top}=0,\quad
u \cdot (e_{\chi}, 0, 0)^\top =0,\quad u \cdot (0, e_{\chi}, 0)^\top=0.
  \end{align}	
  \end{proposition}
Arranging the eigenvalues of the operator $\mathcal{A}_{\chi}$ in non-decreasing order, the $k$-th eigenvalue $\lambda_\chi^{(k)}$ is given by (see {\it e.g.} \cite[Section 4.5]{Davies})
\begin{equation}
\lambda_\chi^{(k)}=\inf_{{\mathfrak U}\subset H^1_{\chi} (Q,\mathbb{C}^3),\, \dim {\mathfrak U}=k}\ \ \sup_{u\in {\mathfrak U}\setminus\{0\}}\frac{ {\mathfrak b}_{\chi} (u,u)}{\|u\|^2_{L^2(Q,\mathbb{C}^3)}},\qquad k\in{\mathbb N}.
\label{max_min} 
\end{equation}

\begin{lemma}\label{revlemma} 
There exist $c_1,c_2>0$, independent of $\varepsilon$ such that
\begin{equation} 
c_1 \leq \frac{\lambda^{(1)}_{\chi}}{|\chi|^4}, \frac{\lambda^{(2)}_{\chi}}{|\chi|^2}, \frac{\lambda^{(3)}_{\chi}}{|\chi|^2} \leq c_2, \quad \lambda^{(4)}_{\chi} \geq c_1 \quad \forall \chi \in Q_r'. 
\end{equation} 		
\end{lemma}
\begin{proof} 
By virtue of (\ref{max_min}) with $k=1,$ it  follows from \eqref{bbb0}  that $\lambda_\chi^{(1)} \geq   C\nu|\chi|^4$ and \eqref{bbb1} implies that $\lambda_\chi^{(1)} \leq 2 \nu^{-1}|\chi|^4$. Next, using  \eqref{bbb15} and the fact that every two-dimensional subspace 
of $H^1_{\chi} (Q, \mathbb{C}^3)$ contains a vector orthogonal to $(-{\rm i}\chi_1 x_3,-{\rm i}\chi_2 x_3,1)^{\top}e_{\chi},$ we infer that $\lambda_\chi^{(2)} \geq C\nu|\chi|^2$. Furthermore, combining \eqref{bbb3} and (\ref{max_min}), where $k=3$ and ${\mathfrak U}$ is taken to be the three-dimensional space 
\[
\bigl\{(c_1-{\rm i}\chi_1 x_3 c_3,c_2-{\rm i}\chi_2 x_3 c_3,c_3)^\top e_{\chi},\ c_1, c_2, c_3\in{\mathbb C}\bigr\},
\]
yields $\lambda_\chi^{(3)}\leq 2\nu^{-1}|\chi|^2$. Finally, using \eqref{bbb4} with $k=4$ and the fact that every four-dimensional subspace
of $ H^1_{\chi}(Q,\mathbb{C}^3)$ contains a vector orthogonal to  the three vectors $(-{\rm i}\chi_1 x_3,-{\rm i}\chi_2 x_3,1)^{\top}e_{\chi},$  $(e_\chi, 0, 0)^\top,$ and  $(0, e_\chi, 0)^\top,$ 
we infer that $\lambda_\chi^{(4)}\geq C$. 	
\end{proof}




	
\begin{remark}\label{revrem}  As already explained in Section \ref{strategynovelties}, 	the norm-resolvent approximations of Theorems \ref{main_result_first}, \ref{main_result_second}, \ref{main_result_general}
	are equivalent to estimating the subspaces corresponding to the lowest three eigenvalues: in the special case $\chi=0$ the eigenvalue zero has a three-dimensional invariant subspace, which is perturbed for $\chi \neq 0$ into a one-dimensional subspace with an eigenvalue of order $|\chi|^4$ and two one-dimensional subspaces with eigenvalues of order $|\chi|^2$. 
	This makes the thin-plate problem fundamentally different from the homogenisation problems considered by the approach of Birman and Suslina, see {\it e.g.} \cite{BirmanSuslina}. In particular, the assumption of regularity of the ``spectral germ" in \cite{BirmanSuslina} implies that the finite-dimensional subspace that appears for $\chi \neq 0$ as a perturbation of the subspace corresponding to the eigenvalue zero for $\chi=0$ has all eigenvalues of the same order $|\chi|^2,$ which is clearly false in the case of plates.  
\end{remark} 


\section{A priori estimates for solutions of (\ref{general_problemrevision})}
\label{apriori_sec}



We consider separately the case when the elasticity tensor $A$ is planar-symmetric and the general, not necessarily symmetric, case: in the former, one is able to separate the study of (\ref{original_identity}) into two different problems, due to the fact that one can identify two invariant subspaces for the operator ${\mathcal A}_\chi,$ in which resolvent estimates (see Sections \ref{formulation}, \ref{normresolvent}) can be made more  
straightforward and instructive for the purpose of considering the general situation. 

\begin{remark}
Although we do not assume that the elasticity tensor depends on $x_3,$ one could do so. In the planar-symmetric case one then would have to assume additionally that the moduli are even with respect to $x_3$ (otherwise we would lose the decomposition into two invariant subspaces), while in the general case the dependence on $x_3$ can be arbitrary.  
\end{remark}

 The results of this section are not used in the proof of the error estimates that we carry out in Section \ref{asymptotic_proc}. However, they are significant in that they give us an idea of what we should look for in the asymptotic approximation.
	
	In what follows we assume that $\chi\neq0,$ which does not affect the derivation of the results announced in Section \ref{formulation}.
	
\subsection{
The case of planar-symmetric elasticity tensor}
\label{planar_symmetric}

In this section we work under the assumption (\ref{symmetry}) on the elasticity tensor $A$ in (\ref{original_identity}). We derive estimates 
that provide an informed guess about the asymptotic behaviour of solutions to (\ref{general_problemrevision})
as $\varepsilon\to0$ for each of two 
subspaces $L^2(Q, {\mathbb C}^3)$ invariant with respect to the operator ${\mathcal A}_\chi\!\!:$
the one consisting of vector functions whose first two components are odd and the third component is even in $x_3,$ and its orthogonal complement, whose elements have the first two components even and the third component odd in $x_3$. 



\subsubsection{First (``bending") invariant subspace} 
\label{first_subspace}
It follows from the analysis of Section \ref{structure} that the first invariant subspace described above contains the eigenspace for the eigenvalue $\lambda_\chi^{(1)}$ of ${\mathcal A}_\chi,$ since it contains the vector $e_\chi(-{\rm i}\chi_1x_3,-{\rm i}\chi_2 x_3, 1)^\top$ on which the Rayleigh quotient attains a value of order $|\chi|^4,$ see (\ref{bbb1}). Guided by this observation, we scale with $|\chi|^{-4}$ the operator ${\mathcal A}_\chi$ (equivalently, the form ${{\mathfrak b}_\chi}$ with $\chi$-dependent domain, see (\ref{bform})) 
 and with $|\chi|^{-1}$ the horizontal components of the force density on its right-hand side, so the identity \eqref{general_problemrevision} is replaced with
\begin{equation}
\frac{1}{|\chi|^4}{{\mathfrak b}_\chi}(u, \varphi)
+ \int_Q u\cdot\overline\varphi
=\frac{1}{|\chi|}\int_Q \widecheck{f\,\,}\!\!\!_{\alpha}\overline{\varphi_\alpha}+\int_Q \widehat{ f}_3\overline{\varphi_3}\quad\quad\forall\varphi\in H^1_\chi(Q, {\mathbb C}^3),
\label{jdba10_new}
\end{equation}
where the form ${{\mathfrak b}_\chi}$ is defined by (\ref{bform}). Here, in line with the above,
we assume that the components $f_\alpha=\widecheck{f\,}\!\!_{\alpha},$ $\alpha=1,2,$ are odd in the $x_3$ variable, while $f_3=\widehat{f}_3$ is even. The estimates \eqref{cetvrta}--\eqref{peta}, \eqref{janprva}--\eqref{jantreca} imply that
\begin{equation}
\begin{aligned}
	\bigl\|(u_1, u_2)^\top\bigr\|_{H^1(Q, {\mathbb C}^2)}
	&\leq  C|\chi|^{-1}
	\bigl\| \textrm{sym}\nabla u
	\bigr\|_{L^2(Q, {\mathbb C}^{3\times3})},\\[0.5em]
	\|u_3\|_{H^1(Q, {\mathbb C})} &\leq  C|\chi|^{-2}
	\bigl\| \textrm{sym}\nabla u
	\bigr\|_{L^2(Q, {\mathbb C}^{3\times3})}.
\end{aligned}
\label{revision1}
\end{equation}
Setting $\varphi=u$ in (\ref{jdba10_new}) and applying (\ref{revision1})
to the right-hand side of the resulting equation, we obtain
\begin{equation}
\bigl\| \textrm{sym}\nabla u
\bigr\|_{L^2(Q, {\mathbb C}^{3\times3})} \leq C|\chi|^2 
\bigl\|(\widecheck{f\,}\!\!_{1},\widecheck{f\,}\!\!_{2}, \widehat{f}_3)^\top\bigr\|_{L^2(Q, {\mathbb C}^3)}.
\label{sym_nabla_est}
\end{equation}
Going back to \eqref{revision1},
we thus infer that
\begin{equation} 
\label{revision11}
\bigl\|(u_1, u_2)^\top\bigr\|_{H^1(Q, {\mathbb C}^2)}
\leq
C |\chi| \bigl\|(\widecheck{f\,}\!\!_{1},\widecheck{f\,}\!\!_{2}, \widehat{f}_3)^\top\bigr\|_{L^2(Q, {\mathbb C}^3)},\qquad
\|u_3\|_{H^1(Q, {\mathbb C})} 
\leq
C\bigl\|(\widecheck{f\,}\!\!_{1},\widecheck{f\,}\!\!_{2}, \widehat{f}_3)^\top\bigr\|_{L^2(Q, {\mathbb C}^3)}.
\end{equation}

Furthermore, as a consequence of the material symmetries (\ref{symmetry}), we have 
\begin{equation*} 
u_\alpha(\cdot, -x_3)=-u_\alpha(\cdot,x_3), \quad \alpha=1,2,\qquad 
 \quad u_3(\cdot,-x_3)=u_3 (\cdot,x_3), \qquad x_3\in I.    
\end{equation*}
In combination with \eqref{janprva}--\eqref{jantreca}, \eqref{sym_nabla_est} this implies the existence of $c_3 \in \mathbb{C}$ such that 
\begin{align*} 
\| u_\alpha+{\rm i}\chi_\alpha c_3 x_3e_\chi\|_{H^1(Q, {\mathbb C})}&\leq  C|\chi|^2 \bigl\|(\widecheck{f\,}\!\!_{1},\widecheck{f\,}\!\!_{2}, \widehat{f}_3)^\top\bigr\|_{L^2(Q, {\mathbb C}^3)},\qquad\alpha=1,2,\\[0.6em]
\| u_3-c_3e_\chi\|_{H^1(Q, {\mathbb C})}&\leq  C|\chi|^2\bigl\|(\widecheck{f\,}\!\!_{1},\widecheck{f\,}\!\!_{2}, \widehat{f}_3)^\top\bigr\|_{L^2(Q, {\mathbb C}^3)}.  
\end{align*}
which suggests that the solution to an appropriate asymptotic equation approximates $u$ 
 with an error of order $O(|\chi|^2)$ with respect to the $H^1$ norm. 
  In Section \ref{asymptotic_proc} we demonstrate that this is indeed so, see the first and fourth estimates in (\ref{korrre1}).

\subsubsection{Second (``membrane'') invariant subspace} 
\label{section_secondsubspace}

Similarly to the argument for the ``bending" invariant subspace, it follows from the analysis of Section \ref{structure} that the second invariant subspace 
described above contains the two-dimensional space spanned by the eigenfunctions of ${\mathcal A}_\chi$ corresponding to the eigenvalues $\lambda_\chi^{(2)},$ $\lambda_\chi^{(3)},$ since it contains vectors $(e_{\chi}, 0, 0)^\top,$ $(0, e_{\chi}, 0)^\top$ on which the Rayleigh quotient attains values of order $|\chi|^2.$
Guided by this observation, we scale the operator ${\mathcal A}_\chi$ with $|\chi|^{-2}$ and do not scale the force density $f,$ 
so the identity \eqref{general_problemrevision} is replaced with
\begin{align}
\frac{1}{|\chi|^2}{{\mathfrak b}_\chi}(u,\varphi)+\int_Q u\cdot\overline\varphi
= \int_Q \widehat{f\,}\!\!_{\alpha}\overline{\varphi_\alpha}+\int_Q\widecheck{f\,}\!\!_{3}\overline{\varphi_3}\quad\quad\forall\varphi\in H^1_\chi(Q, {\mathbb C}^3),
\label{jdba10_newsecond}
\end{align}
where we assume that $f_\alpha=\widehat{f\,}\!\!_{\alpha},$ $\alpha=1,2,$ are even and $f_3=\widecheck{f\,}\!\!_{3}$ is odd in the $x_3$ variable. Setting $\varphi=u$ in (\ref{jdba10_new}), 
we obtain
\begin{equation}
\|u\|_{L^2(Q, {\mathbb C}^3)}\leq C\bigl\|(\widehat{f}_{1}, \widehat{f}_2, \widecheck{f\,}\!\!_{3})^\top\bigr\|_{L^2(Q, {\mathbb C}^3)},\qquad \bigl\| \textrm{sym}\nabla u
\bigr\|_{L^2(Q, {\mathbb C}^{3\times3})} \leq C|\chi| 
\bigl\|(\widehat{f}_{1}, \widehat{f}_2, \widecheck{f\,}\!\!_{3})^\top\bigr\|_{L^2(Q, {\mathbb C}^3)}.
\label{init_est}
\end{equation}
The estimates \eqref{cetvrta}, \eqref{janprva} and the second estimate in (\ref{init_est})
imply ({\it cf.} \eqref{revision1}, (\ref{revision11}))
\begin{equation}
\bigl\|(u_1, u_2)^\top\bigr\|_{H^1(Q, {\mathbb C}^2)}\leq
C|\chi|^{-1}
\bigl\| \textrm{sym}\nabla u
\bigr\|_{L^2(Q, {\mathbb C}^{3\times3})}\leq C\bigl\|(\widehat{f}_{1}, \widehat{f}_2, \widecheck{f\,}\!\!_{3})^\top\bigr\|_{L^2(Q, {\mathbb C}^3)}.
\label{revision1second1}
\end{equation}
Furthermore, by virtue of the material symmetries (\ref{symmetry}), we have
\begin{equation} 
\label{symmetry2revisionintrosecond}
u_\alpha(\cdot, -x_3)=u_\alpha(\cdot,x_3),\quad\alpha=1,2,
 \qquad u_3(\cdot,-x_3)=-u_3 (\cdot,x_3), \qquad x_3\in I,    
\end{equation}
which, in combination with \eqref{jantreca} and the second estimate in (\ref{init_est}),
implies ({\it cf.} 
\eqref{revision11})
\begin{equation}
\|u_3\|_{H^1(Q, {\mathbb C})}\leq
C \bigl\| \textrm{sym}\nabla u
\bigr\|_{L^2(Q, {\mathbb C}^{3\times3})}\leq C|\chi|\bigl\|(\widehat{f}_{1}, \widehat{f}_{2}, \widecheck{f\,}\!\!_{3})^\top\bigr\|_{L^2(Q, {\mathbb C}^3)}.
\label{revision1second2}
\end{equation}
Finally, 
\eqref{janprva}--\eqref{jantreca},  \eqref{symmetry2revisionintrosecond}, 
and the second estimate in (\ref{init_est}) imply 
the existence of $c_1,c_2\in \mathbb{C}$ such that
\begin{equation}
\|u_\alpha-c_{\alpha} e_\chi\|_{H^1(Q, {\mathbb C}^3)}
\leq C|\chi|\bigl\|(\widehat{f}_1, \widehat{f}_2, \widecheck{f\,}\!\!_3)^\top\bigr\|_{L^2(Q, {\mathbb C}^3)} ,\qquad\alpha=1,2,
\label{prelim_est2}
\end{equation}
which suggests that the solution to an appropriate asymptotic equation approximates $u_{\alpha},$ $\alpha=1,2,$ with an error of order $O(|\chi|)$ in $H^1.$
We demonstrate the latter in Section \ref{asymptotic_proc}, see the first estimate in (\ref{korrre1oo}).

Notice also that 
setting $\widehat{f}_{\alpha}=0,$ $\alpha=1,2,$ in
 \eqref{jdba10_newsecond}, we obtain
\begin{equation}
\bigl\| \textrm{sym}\nabla u
\bigr\|_{L^2(Q, {\mathbb C}^{3\times3})}
\leq C|\chi|^2\bigl\|\widecheck{f\,}\!\!_{3}\bigr\|_{L^2(Q, {\mathbb C})}, 
\end{equation}
and, using \eqref{revision1second1}, \eqref{revision1second2}, it yields the bounds
\begin{equation}
\bigl\|(u_1, u_2)^\top\bigr\|_{H^1(Q, {\mathbb C}^2)}\leq
C |\chi|\bigl\| \widecheck{f\,}\!\!_{3}\bigr\|_{L^2(Q, {\mathbb C})},\quad 
\|u_3\|_{H^1(Q, {\mathbb C})}\leq 
 C|\chi|^2\bigl\| \widecheck{f\,}\!\!_{3}\bigr\|_{L^2(Q, {\mathbb C})}.
\label{additional_est}
\end{equation} 
As a result, the component $\widecheck{f\,}\!\!_{3}$ can be set to zero for purposes of obtaining the leading-order $H^1$ 
estimates (\ref{prelim_est2}),
see also Remark \ref{f3_omitted}.


 \subsection{
The case of a general elasticity tensor}
\label{casegeneltens}
 
In the general case, when we do not assume \eqref{symmetry}, consider two asymptotic procedures separately. The first of these corresponds to the problem ({\it cf.} (\ref{jdba10_new}))
\begin{equation} 
\label{scalingjedan} 
\frac{1}{|\chi|^4}{{\mathfrak b}_\chi}(u, \varphi)
+\int_Q u\cdot\overline\varphi
=\frac{1}{|\chi|}\int_Q f\,\!\!_{\alpha}\overline{\varphi_\alpha}+\int_Q  f_3\overline{\varphi_3}\quad\quad\forall\varphi\in H^1_\chi(Q, {\mathbb C}^3).
\end{equation}
In the same way as in Section 
\ref{first_subspace}, we obtain the estimates ({\it cf.} (\ref{sym_nabla_est}), (\ref{revision11}))
\begin{align*}
\bigl\| \textrm{sym}\nabla u\bigr\|_{L^2(Q, {\mathbb C}^{3\times3})} &\leq C|\chi|^2\|f\|_{L^2(Q, {\mathbb C}^3)}, \\[0.6em]
	\bigl\|(u_1, u_2)^\top\bigr\|_{H^1(Q, {\mathbb C}^2)}
	&\leq C |\chi| \|f\|_{L^2(Q, {\mathbb C}^3)},\quad
	\|u_3\|_{H^1(Q, {\mathbb C})}
	\leq C\|f\|_{L^2(Q, {\mathbb C}^3)},
\end{align*}
as well as the existence of  $c_1, c_2, c_3 \in \mathbb{C}$ such that
\begin{align}
 \| u_\alpha-(c_{\alpha}-{\rm i}\chi_\alpha c_3 x_3)e_\chi\|_{H^1(Q, {\mathbb C})}&\leq C|\chi|^2 \|f\|_{L^2(Q, {\mathbb C}^3)},\qquad\alpha=1,2,\label{approx11}
 \\[0.8em]
 \| u_3-c_3e_\chi\|_{H^1(Q, {\mathbb C})}&\leq  C|\chi|^2\|f\|_{L^2(Q, {\mathbb C}^3)}.\label{approx12}  
\end{align}

The second procedure corresponds to the problem ({\it cf.} (\ref{jdba10_newsecond}))
\begin{align}
\frac{1}{|\chi|^2}{{\mathfrak b}_\chi}(u,\varphi)+\int_Q u\cdot\overline\varphi
= \int_Qf_\alpha\overline{\varphi_\alpha}+\int_Qf_3\overline{\varphi_3}\quad\quad\forall\varphi\in H^1_\chi(Q, {\mathbb C}^3).
\label{scalingdva}
\end{align}
We follow the strategy of Section \ref{section_secondsubspace}. Namely, first setting $\varphi=u$ in (\ref{scalingdva}) we obtain ({\it cf.} (\ref{init_est}))
\begin{equation}
\|u\|_{L^2(Q, {\mathbb C}^3)} \leq C\|f\|_{L^2(Q, {\mathbb C}^3)},\qquad \bigl\| \textrm{sym}\nabla u
\bigr\|_{L^2(Q, {\mathbb C}^{3\times3})} \leq C|\chi| 
\bigl\|f\|_{L^2(Q, {\mathbb C}^3)}.
\label{init_est_gen}
\end{equation}
Second, \eqref{cetvrta},
\eqref{janprva},
and the second estimate in (\ref{init_est_gen})
imply ({\it cf.} \eqref{revision1second1})
\begin{equation*}
\bigl\|(u_1, u_2)^\top\bigr\|_{H^1(Q, {\mathbb C}^2)}
\leq
C|\chi|^{-1}
\bigl\| \textrm{sym}\nabla u
\bigr\|_{L^2(Q, {\mathbb C}^{3\times3})}\leq C\|f\|_{L^2(Q, {\mathbb C}^3)},
\end{equation*}
Finally, \eqref{janprva}--\eqref{jantreca} and the second estimate in (\ref{init_est_gen}) imply the existence of $c_1,c_2,c_3 \in \mathbb{C}$  such that
\begin{align}
\bigl\| u_\alpha-(c_{\alpha}-{\rm i}\chi_\alpha c_3 x_3)e_\chi\bigr\|_{H^1(Q, {\mathbb C})}&\leq  C|\chi|\|f\|_{L^2(Q, {\mathbb C}^3)},\qquad\alpha=1,2\label{approx21},\\[0.55em]
\| u_3-c_3e_\chi\|_{H^1(Q, {\mathbb C})}&\leq  C|\chi|\|f\|_{L^2(Q, {\mathbb C}^3)}.\label{approx22}  
\end{align}
It follows from (\ref{init_est_gen}) and (\ref{approx22}) that
\[
|c_3|\le C\|f\|_{L^2(Q, {\mathbb C}^3)}.
\]
Therefore ({\it cf.} \eqref{revision1second2})
\[
\|u_3\|_{H^1(Q, {\mathbb C})} \leq C\|f\|_{L^2(Q, {\mathbb C}^3)}
\]
and ({\it cf.} (\ref{approx21}))
\[
\bigl\| u_\alpha-c_{\alpha}e_\chi\bigr\|_{H^1(Q, {\mathbb C})}\leq  C|\chi|\|f\|_{L^2(Q, {\mathbb C}^3)},\qquad\alpha=1,2.
\]
The values of $c_j,$ $j=1,2,3,$ in (\ref{approx11})--(\ref{approx12}), (\ref{approx21})--(\ref{approx22}) will be determined in Section \ref{gen_est_section}.

\begin{remark}
\label{expl2} 
	The existence of two distinguished scalings for the operator ${\mathcal A}_\chi$ plays a key role in the proof of Theorem \ref{main_result_general}.
	 When adopting the first scaling, we approximate the eigenspace ${\mathcal A}_\chi$ corresponding to the eigenvalue of order $|\chi|^4$. The related normalised eigenfunction has in-plane components of order $|\chi|$ and the vertical component of order one, thus 
	 we achieve one more order of precision in $|\chi|$ for the first two components, {\it cf.} (\ref{approx11})--(\ref{approx12}). Under the second scaling, we approximate the space spanned by eigenfunctions of the two eigenvalues of ${\mathcal A}_\chi$ of order $|\chi|^2$. The related normalised eigenfunctions have components of order one and we approximate them with precision of order $|\chi|,$ {\it cf.} (\ref{approx21})--(\ref{approx22}).
	\end{remark}
	
	 In the next section we develop an asymptotic procedure for the ($\varepsilon$-independent) problems (\ref{jdba10_new}), (\ref{jdba10_newsecond}), (\ref{scalingjedan}) with respect to the modulus of the quasimomentum $\chi.$ In this way we obtain order-optimal resolvent estimates for (\ref{general_problemrevision}), equivalently for the fibres ${\mathcal A}_\chi,$ see (\ref{bform}). This allows us to derive norm-resolvent approximations for scaled versions of the original operator 
	 ${\mathcal A}^\varepsilon,$ see also the original formulation (\ref{original_identity}), which is a direct integral of ${\mathcal A}_\chi,$ $\chi\in Q'_{\rm r},$ by the Floquet transform 
	 \cite[Chapter 3]{Kuchment}, \cite[Section XIII.16]{ReedSimon}, \cite[Chapter 7]{BirmanSolomyak}.
	 	 It also makes our approach different to that presented in  \cite{ChCoARMA}, where an asymptotics as $\varepsilon\to 0$ uniform with respect to the quasimomentum, rather than an asymptotics in $|\chi|,$ was established for a multi-dimensional degenerate problem. Since our estimates are optimal with respect to $\chi,$
		 we do not need to separate the dual cell $Q'_{\rm r}$ into the ``inner" and ``outer" regions like it is done in \cite{ChCoARMA}.
		 In addition, we identify the correctors required for $L^2\to H^1$ estimates as well as for higher-precision $L^2\to L^2$ estimates, see Section \ref{normresolvent} for details. By adopting particular scalings for the operator of (\ref{original_identity}) and the force density, we thus also recover a version of the results of \cite{BirmanSuslina}.

From the {\it a priori} estimates of Section \ref{first_subspace} we see that the appropriate scaling of the horizontal components of the displacement with respect to the vertical one is by $|\chi|,$ not $\varepsilon$ as is usually done for bounded plates. This suggests that we should scale the horizontal force components with respect to the vertical ones with the pre-factor $|\chi|^{-1},$ in order that the corresponding parts of the work by external loads $\int_Q (f_1u_1+f_2u_2)$ and  
$\int_Qf_3u_3$ are of the same order. In our final results we allow arbitrary scalings  $\varepsilon^{-\delta}$ between horizontal and vertical forces, and the final approximation error then depends also on the scaling exponent $\delta$. 

In the asymptotic procedure we capture as many correctors in the expansion as is necessary to infer the error estimate from the equation. In the end these correctors are not required in the final result --- this is standard in homogenisation, see {\it e.g.} \cite[Chapter 7]{CD}. At each step of the asymptotic procedure we ensure that the related system has a solution --- this is done by the method of ``updating", which to the best of our knowledge is also novel. Repeating this process allows us to obtain the complete asymptotic expansion, up to any order in $|\chi|.$

\section{Solution asymptotics: recurrence relations and error estimates}

\label{asymptotic_proc}


In the asymptotic procedure developed in this section, we use the operator 
$X$ on $L^2(Q, {\mathbb C}^3)$ defined for each $\chi\in Q'_{\rm r}$ as the matrix multiplication
\begin{equation*}
X\varphi:=\left(\begin{array}{ccc} \chi_1 \varphi_1 & \dfrac{1}{2} (\chi_2 \varphi_1+\chi_1 \varphi_2) & \dfrac{1}{2} \chi_1 \varphi_3 \\[1.3em] \dfrac{1}{2} (\chi_2 \varphi_1+\chi_1 \varphi_2) & \chi_2 \varphi_2 & \dfrac{1}{2} \chi_2 \varphi_3 \\[1.3em]  \dfrac{1}{2} \chi_1 \varphi_3  & \dfrac{1}{2} \chi_2 \varphi_3 & 0
\end{array} \right),\qquad \varphi\in L^2(Q, {\mathbb C}^3).
\end{equation*}
It is easy to see that for all $\varphi\in L^2(Q, {\mathbb C}^3)$ one has
\begin{equation}
\label{bukal1000}
\bigl\|X\varphi\bigr\|_{L^2(Q, {\mathbb C}^3)} \geq \frac{1}{2}|\chi| \|\varphi\|_{L^2(Q, {\mathbb C}^3)},\quad \chi\in Q'_{\rm r}.
\end{equation}
We also define, for $m_1,m_2,m_3 \in \mathbb{C},$
\begin{align*}
\Xi(\chi, m_1,m_2)&:={\rm i}{\mathfrak I}\left(\begin{array}{ccc} \chi_1 m_1 & \dfrac{1}{2}(\chi_1 m_2+ \chi_2 m_1)\\[0.8em] 
\dfrac{1}{2}(\chi_1 m_2 +\chi_2 m_1)  & \chi_2 m_2\end{array} \right),\quad
\Upsilon(\chi, m_3):={\rm i}m_3{\mathfrak I}\left(\begin{array}{ccc} \chi_1^2& \chi_1 \chi_2 \\[0.8em] \chi_1 \chi_2 & \chi_2^2\end{array} \right),
\end{align*}
where ${\mathfrak I}$ is the mapping (\ref{iota_def}), and remark the following useful identity:
\begin{equation}
\label{ljulj1} {\rm i}X(D_1-{\rm i}\chi_1x_3 D_3,D_2-{\rm i}\chi_2x_3 D_3,0)^{\top}=\Xi (\chi, D_1,D_2)-{\rm i}x_3 \Upsilon(\chi, D_3)\qquad \forall D=(D_1, D_2, D_3)\in \mathbb{C}^3. 
\end{equation}	

Furthermore, for each $\chi\in Q'_{\rm r}$ we define a matrix $A^{\textrm{hom}}_{\chi}$ by the formula 
\begin{align*} 
&A^{\textrm{hom}}_{\chi}m\cdot\overline{d}
:=\int_Q A\bigl(\nabla N_m+\Xi(\chi,m_1,m_2)-{\rm i}x_3 \Upsilon(\chi, m_3)\bigr):\overline{\bigl(\Xi(\chi,d_1,d_2)-{\rm i}x_3 \Upsilon(\chi, d_3)\bigr)},\qquad m, d\in{\mathbb C}^3,
\end{align*} 
where $N_m,$ $m\in{\mathbb C}^3,$ satisfy
\begin{align*}
({\rm sym}\nabla)^{*} A\,{\rm sym}\nabla N_m=-({\rm sym}\nabla)^{*} A\bigl(\Xi(\chi,m_1,m_2)-{\rm i}x_3 \Upsilon(\chi, m_3)\bigr),\qquad
 N_m\in H^1_{\#}(Q, {\mathbb C}^3),\quad\int_Q N_m=0.
\end{align*}
Notice that
\begin{equation}
\begin{aligned} 
\max\bigl\{|m_1|, |m_2|\bigr\}+\max\bigl\{|\chi_1|,|\chi_2|\bigr\} |m_3|&
\leq  C\vert\chi\vert^{-1} 
\Bigl(\bigl|\Xi(\chi,m_1,m_2)\bigr|+\bigl|\Upsilon(\chi, m_3)\bigr|\Bigr),\\[0.5em] 
|m_3| &\leq C\vert\chi\vert^{-2} 
\bigl|\Upsilon(\chi,m_3)\bigr\vert,
\end{aligned}
\label{muic1}
\end{equation}
and in the case of the planar symmetries \eqref{symmetry} we have
$ N_m=N_m^{(1)}+N_m^{(2)}, $
where 
\begin{align*}
({\rm sym}\nabla)^{*}A\,{\rm sym}\nabla N_m^{(1)} &= ({\rm sym}\nabla)^{*}A\bigl({\rm i}x_3 \Upsilon(\chi, m_3)\bigr),\quad\quad N_m^{(1)}\in H^1_{\#}(Q, {\mathbb C}^3), \quad \quad\int_Q N_m^{(1)}=0.
\\[0.7em]
({\rm sym}\nabla)^{*}A\,{\rm sym}\nabla N_m^{(2)} &= -({\rm sym}\nabla)^{*}A\Xi(\chi,m_1,m_2),\quad\quad 
N_m^{(2)}\in H^1_{\#}(Q, {\mathbb C}^3), \quad\quad\int_Q N_m^{(2)}=0.
\end{align*}
By a symmetry argument, we have, for all $m\in{\mathbb C}^3,$ $x_3\in I,$ $j=1,2:$
\begin{equation*}
\bigl(N_{m}^{(j)}\bigr)_{\alpha}(\cdot,-x_3)=\mp\bigl(N_{m}^{(j)}\bigr)_{\alpha}(\cdot,x_3),\quad\alpha=1,2,\qquad
\bigl(N_{m}^{(j)}\bigr)_{3}(\cdot,-x_3)=\pm\bigl(N_{m}^{(j)}\bigr)_{3}(\cdot,x_3),
\end{equation*}
where the top and bottom signs in ``$\mp$" and ``$\pm$'' are taken for $j=1$ and $j=2,$ respectively.
Therefore,
\begin{equation*}
A^{\textrm{hom}}_{\chi} m\cdot\overline{d}
=A^{\textrm{hom},1}_{\chi}m_3\,\overline{d_3}+A^{\textrm{hom},2}_{\chi}(m_1,m_2)^\top\cdot \overline{(d_1,d_2)^\top},  
\end{equation*}
where, for $\chi\in Q'_{\rm r},$ $m_3, d_3\in{\mathbb C},$ $(m_1, m_2)^\top,$ $(d_1, d_2)^\top\in{\mathbb C}^2,$ we define
\begin{align*} 
	A^{\textrm{hom},1}_{\chi}m_3\,\overline{d_3}&:=\int_Q A\bigl(\nabla N_m^{(1)}-{\rm i}x_3 \Upsilon(\chi, m_3)\bigr): \overline{\bigl(-{\rm i}x_3 \Upsilon(\chi, d_3)\bigr)},
	\\[0.4em]  
	A^{\textrm{hom},2}_{\chi}(m_1,m_2)^\top\cdot\overline{(d_1,d_2)^\top}  &:=\int_Q A\bigl(\nabla N_m^{(2)}+\Xi(\chi,m_1,m_2)\bigr):\overline{ \Xi(\chi,d_1,d_2)}. 
\end{align*}

\subsection{Planar-symmetric elasticity tensor}
\label{planar} 
In this section we assume that the ``planar symmetry" conditions (\ref{symmetry}) on the elasticity tensor $A$ hold and, as in Section \ref{planar_symmetric}, analyse the bending and membrane invariant subspaces separately. We use the notation $H^1_\#(Q, {\mathbb C}^3)$ for the closure of smooth functions $Q_{\rm r}$-periodic 
in $y_1,$ $y_2$ in the  norm of $H^1(Q, {\mathbb C}^3).$ 

\subsubsection{First invariant subspace}
\label{section_prvaasimptotika}

\textbf{Asymptotic equation.} 
Here assume that the components $f_\alpha=\widecheck{f\,}\!\!_\alpha,$ $\alpha=1, 2,$ of the body force density are odd in the $x_3$ variable, while $f_3=\widehat{f\,}\!\!_3$ is even, {\it cf.} Section \ref{first_subspace}, and throughout the present section we write $f$ in place of $(\widecheck{f\,}\!\!_{1},\widecheck{f\,}\!\!_{2}, \widehat{f}_3)^\top.$ An approximating problem for the leading-order amplitude $m_3\in{\mathbb C}$ in the first invariant subspace, see the estimates (\ref{korrre1}), has the form\footnote{Note that the left-hand side of (\ref{sinisa1000revision}) can be simplified, by observing that
\begin{equation*}
\int_Q (-{\rm i}\chi_1 x_3 m_3, -{\rm i}\chi_2 x_3m_3,m_3)^\top \cdot \overline{(-{\rm i}\chi_1 x_3 d_3, -{\rm i}\chi_2 x_3d_3,d_3)^\top}=\biggl(\frac{|\chi|^2}{3}+1\biggr)m_3\overline{d_3},
\end{equation*}
see also (\ref{sinisa1500oo}), (\ref{sinisa1500}). For clarity of the argument, we keep the original form of the second term on the left-hand side of (\ref{sinisa1000revision}).
}
\begin{equation}
\begin{aligned}
\vert\chi\vert^{-4}A^{\textrm{hom},1}_{\chi}m_3\,\overline{d_3} 
&+\int_Q (-{\rm i}\chi_1 x_3 m_3, -{\rm i}\chi_2 x_3m_3,m_3)^\top \cdot \overline{(-{\rm i}\chi_1 x_3 d_3, -{\rm i}\chi_2 x_3d_3,d_3)^\top}
\\[0.2em] 
&=\int_Q|\chi|^{-1}\bigl(\widecheck{f\,}\!\!_1, \widecheck{f\,}\!\!_2\bigr)^\top \cdot\overline{e_\chi(-{\rm i}\chi_1 x_3 d_3, -{\rm i}\chi_2 x_3d_3)^\top}
+\int_Q \widehat{f\,}\!\!_3 \cdot \overline{e_\chi d_3} \quad\quad\forall d_3\in{\mathbb C},
\end{aligned}
\label{sinisa1000revision}
\end{equation}
so that the following estimate holds:
\begin{equation} 
\label{propertiesrevision}
\vert m_3\vert \leq C\| f
\|_{L^2(Q, {\mathbb C}^3)}. 
\end{equation}
\vskip 0.1cm
\noindent\textbf{Approximation error estimates in the first invariant subspace.} 
{\it Step 1.} We define ${\mathfrak u}_2 \in H^1_{\#}(Q,\mathbb{C}^3)$ that solves 
\begin{equation} \label{sinisa1001}
(\sym \nabla)^{*}A \sym \nabla {\mathfrak u}_2=(\sym \nabla)^{*}A\bigl({\rm i}x_3 \Upsilon(\chi,m_3)\bigr), \quad\quad \int_Q {\mathfrak u}_2=0,      
\end{equation}
so that, by the standard Poincar\'{e} inequality combined with the Korn inequality for periodic functions ({\it cf.} (\ref{kornquasi})), we have the bound
\begin{equation} 
\label{sinisa1002}
\|{\mathfrak u}_2\|_{H^1(Q, {\mathbb C}^3)} \leq C |\chi|^2\|f
\|_{L^2(Q, {\mathbb C}^2)}.
\end{equation}
In addition, the following symmetry properties\footnote{Similar symmetry properties hold for all terms in the asymptotics series, see {\it e.g.} (\ref{sinisa1005}).} hold: 
\begin{equation} 
\label{sinisa1003} 
({\mathfrak u}_{2})_\alpha(\cdot,-x_3)=-({\mathfrak u}_{2})_{\alpha}(\cdot,x_3), \quad \alpha=1,2, 
\qquad({\mathfrak u}_{2})_{3}(\cdot,-x_3)=({\mathfrak u}_{2})_{3}(\cdot,x_3),\quad x_3\in I.
\end{equation}

Next, we define ${\mathfrak u}_3^{(1)} \in H^1_{\#}(Q,\mathbb{C}^3)$ that satisfies 
\begin{equation}
\begin{aligned} 
(\sym \nabla)^{*}A \sym \nabla {\mathfrak u}_3^{(1)}&= {\rm i}\bigl\{X^{*}A \sym \nabla {\mathfrak u}_2-(\sym \nabla)^{*}  A(X{\mathfrak u}_2)
-X^{*}A\bigl({\rm i}x_3\Upsilon(\chi,m_3)\bigr)\bigr\}
\\[0.5em]
&+|\chi|^4 ({\rm i}\chi_1 x_3 m_3, {\rm i}\chi_2 x_3m_3,0)^\top  +|\chi|^3\overline{e_\chi}\bigl(\widecheck{f\,}\!\!_1,\widecheck{f\,}\!\!_2,0\bigr)^\top,\quad\quad \int_Q {\mathfrak u}_3^{(1)}=0.
\end{aligned}
\label{sinisa1004}
\end{equation}
In view of the symmetry properties \eqref{sinisa1003}, the problem (\ref{sinisa1004}) has a unique solution. Indeed, it follows from (\ref{sinisa1003}) that the right-hand side of the above equation for ${\mathfrak u}_3^{(1)}$ vanishes when tested with vectors $(D_1,D_2,0)^\top$. To verify that it also vanishes when tested with vectors $(0,0,D_3)^\top,$ we use the fact that 
\begin{equation}
\label{psi_id}
\begin{aligned}
\int_Q  A\bigl(-{\rm i}x_3 \Upsilon(\chi,m_3)+\sym \nabla {\mathfrak u}_2\bigr):\overline{{\rm i}X(0,0,D_3)^\top}&= 
\int_Q  A\bigl( -{\rm i}x_3 \Upsilon(\chi,m_3)+\sym \nabla {\mathfrak u}_2\bigr):\overline{\sym \nabla \psi}=0,\\[0.4em]
&\psi:={\rm i}D_3 x_3(\chi_1, \chi_2 ,0)^\top.
\end{aligned}
\end{equation}
By virtue of (\ref{propertiesrevision}), (\ref{sinisa1002}), the solution to (\ref{sinisa1004}) satisfies the estimate 
\begin{equation}
\label{julian101} 
\bigl\|{\mathfrak u}_3^{(1)}\bigr\|_{H^1(Q, {\mathbb C}^3)} \leq C |\chi|^3\|f
\|_{L^2(Q, {\mathbb C}^3)}. 
\end{equation}
Furthermore, using the assumption on the forces and the properties \eqref{sinisa1003} we obtain 
\begin{equation}
 \label{sinisa1005} 
\bigl({\mathfrak u}_{3}^{(1)}\bigr)_{\alpha}(\cdot,-x_3)=-\bigl({\mathfrak u}_{3}^{(1)}\bigr)_{\alpha}(\cdot,x_3), \quad\alpha=1,2,\qquad
\bigl({\mathfrak u}_{3}^{(1)}\bigr)_{3}(\cdot,-x_3)=\bigl({\mathfrak u}_{3}^{(1)}\bigr)_{3}(\cdot,x_3),\quad x_3\in I.
\end{equation}

Next, we seek ${\mathfrak u}_4^{(1)} \in H^1_{\#}(Q,\mathbb{C}^3)$ such that
\begin{equation}
\begin{aligned}
(\sym \nabla)^{*}A \sym \nabla {\mathfrak u}_4^{(1)} &= {\rm i}\bigl\{X^{*}A \sym \nabla {\mathfrak u}_3^{(1)}-(\sym \nabla)^{*}A\bigl(X{\mathfrak u}_3^{(1)}\bigr)\bigr\}
\\[0.5em] &-X^{*}A(X{\mathfrak u}_2)
-|\chi|^4\bigl(0,0,m_3-\widehat{f}_3\overline{e_\chi}\bigr)^\top,
\quad\quad \int_Q {\mathfrak u}_4^{(1)}=0.
\end{aligned}
\label{sinisa1010}
\end{equation}
The right-hand side of the problem for ${\mathfrak u}_4^{(1)}$ yields zero when tested with constant vectors of the form $(D_1,D_2,0)^\top$. To ensure it also vanishes when tested with vectors $(0,0,D_3)^\top,$ we observe that ({\it cf.} (\ref{psi_id}))
\begin{equation*}
\begin{aligned}
	& \int_Q A \bigl(\sym \nabla {\mathfrak u}_3^{(1)}+ {\rm i}X{\mathfrak u}_2 \bigr): \overline{{\rm i}X(0,0,D_3)^\top}
	=\int_Q A \bigl(\sym \nabla {\mathfrak u}_3^{(1)}+ {\rm i}X {\mathfrak u}_2 \bigr): \overline{ \sym \nabla \psi}\\[0.4em] 
	& 
	=A^{\textrm{hom},1}_{\chi}m_3\,\overline{D}_3
	+|\chi|^4\int_Q ({\rm i}\chi_1 x_3 m_3, {\rm i}\chi_2 x_3m_3,0)^\top \cdot \overline{({\rm i}\chi_1 x_3 D_3, {\rm i}\chi_2 x_3D_3,0)^\top}
	\\[0.4em] 
	& \hspace{+8ex}
	\qquad\quad\ \ +|\chi|^3\int_Q \bigl(\widecheck{f\,}\!\!_1,\widecheck{f\,}\!\!_2\bigr)^\top \cdot \overline{e_\chi({\rm i}\chi_1 x_3 D_3, {\rm i}\chi_2 x_3D_3)^\top}
	=-|\chi|^4 m_3\overline{D_3}+|\chi|^4 \int_Q \widehat{f}_3 \cdot \overline{e_{\chi} D_3},
\end{aligned}
\end{equation*}
where we use \eqref{sinisa1000revision}, \eqref{sinisa1004}. 

Similarly to the above argument for $\mathfrak{u}_2,$ $\mathfrak{u}_3^{(1)},$ the following estimate holds for the solution of (\ref{sinisa1010}):
\begin{equation*}
\bigl\|{\mathfrak u}_4^{(1)}\bigr\|_{H^1(Q, {\mathbb C}^3)} \leq C |\chi|^4\|f
\|_{L^2(Q, {\mathbb C}^3)}. 
\end{equation*}
\vskip 0.2cm
{\it Step 2.} 
We proceed by updating the leading-order term $(-\chi_1 x_3 m_3,-{\rm i}\chi_2 x_3 m_3,m_3)^\top$. To this end, we define a ``correction'' $m_3^{(1)}$ to the value $m_3$ as the solution to 
\begin{equation}
\begin{aligned} 
\label{sinisa1500oo}
A^{\textrm{hom},1}_{\chi}m_3^{(1)}\,\overline{d_3} 
&+|\chi|^4\int_Q\bigl(-{\rm i}\chi_1 x_3 m_3^{(1)}, -{\rm i}\chi_2 x_3m_3^{(1)},m_3^{(1)}\bigr)^\top \cdot \overline{(-{\rm i}\chi_1 x_3 d_3, -{\rm i}\chi_2 x_3d_3,d_3)^\top}
\\[0.4em]
& 
=-\int_Q A \bigl(\sym \nabla {\mathfrak u}_4^{(1)}+{\rm i}X{\mathfrak u}_3^{(1)}\bigr): \overline{{\rm i}X(0,0,d_3)^\top}\quad\quad\forall d_3\in{\mathbb C}.
\end{aligned}
\end{equation}
We then have the estimate
\begin{equation} \label{julian100} 
\bigl|m_3^{(1)}\bigr| \leq C|\chi|
\|f
\|_{L^2(Q, {\mathbb C}^3)}.
\end{equation}
Following the correction,
we repeat the procedure and first define ${\mathfrak u}_3^{(2)} \in H^1_{\#} (Q, \mathbb{C}^3)$ as the solution of ({\it cf.} (\ref{sinisa1002}))
\begin{equation} 
\label{sinisa1120}
(\sym \nabla)^{*} A \sym \nabla {\mathfrak u}_3^{(2)}=(\sym \nabla)^{*}A\bigl({\rm i}x_3 \Upsilon\bigl(\chi,m_3^{(1)}\bigr)\bigr),\quad\quad   \int_Q {\mathfrak u}_3^{(2)}=0,
\end{equation}
so that
\begin{equation} \label{julian102} 
\bigl\|{\mathfrak u}_3^{(2)}\bigr\|_{H^1(Q, {\mathbb C}^3)} \leq C |\chi|^3\|f
\|_{L^2(Q, {\mathbb C}^2)}.
\end{equation}
Second, consider ${\mathfrak u}_4^{(2)} \in  H^1_{\#} (Q, \mathbb{C}^3)$ satisfying ({\it cf.} (\ref{sinisa1004}))
\begin{equation}
\label{sinisa1104oo}
\begin{aligned}
(\sym \nabla)^{*}A \sym \nabla  {\mathfrak u}_4^{(2)}&= {\rm i}\bigl\{X^{*} \cdot A \sym \nabla {\mathfrak u}_3^{(2)}-(\sym \nabla)^{*} A\bigl(X{\mathfrak u}_3^{(2)}\bigr)
\\[0.7em]  & -X^{*} \cdot A\bigl({\rm i}x_3\Upsilon\bigl(\chi,m_3^{(1)}\bigr)\bigr)\bigr\}
+|\chi|^4\bigl({\rm i}\chi_1 x_3 m_3^{(1)}, {\rm i}\chi_2 x_3m_3^{(1)},0\bigr)^\top,\quad \int_Q {\mathfrak u}_4^{(2)}=0.
\end{aligned}
\end{equation}
In the same way as above, we conclude that (\ref{sinisa1104oo}) has a unique solution, and  
\begin{equation} \label{julian103} 
\bigl\|{\mathfrak u}_4^{(2)}\bigr\|_{H^1(Q, {\mathbb C}^3)} \leq C |\chi|^4\|f
\|_{L^2(Q, {\mathbb C}^3)}.
\end{equation}

Finally, we define ${\mathfrak u}_5^{(1)} \in  H^1_{\#} (Q, \mathbb{C}^3)$ as the solution to ({\it cf.} (\ref{sinisa1010}))
\begin{equation}
\label{sinisa1022oo} 
\begin{aligned}
(\sym \nabla)^{*}A \sym \nabla {\mathfrak u}_5^{(1)} &= {\rm i}\Bigl(X^{*}A \sym \nabla\bigl({\mathfrak u}_4^{(1)}+{\mathfrak u}_4^{(2)}\bigr)-(\sym \nabla)^{*}A\bigl\{X\bigl({\mathfrak u}_4^{(1)}+{\mathfrak u}_4^{(2)}\bigr)\bigr\}\Bigr)
\\[0.7em]
&-X^{*}A\bigl\{X\bigl({\mathfrak u}_3^{(1)}+{\mathfrak u}_3^{(2)}\bigr)\bigr\}
-|\chi|^4 (0,0,m_3^{(1)})^\top,
\quad\quad \int_Q {\mathfrak u}_5^{(1)}=0.
\end{aligned}
\end{equation}
The right-hand side yields zero when tested with vectors $D=(D_1,D_2,0)^\top$, by symmetry properties similar to (\ref{sinisa1003}) for the terms involved. To see that it yields zero when tested with vectors $(0,0,D_3)^\top,$ we use the same reasoning as above ({\it cf.} (\ref{psi_id})):
\begin{align*}
\int_Q A \bigl(\sym \nabla {\mathfrak u}_4^{(2)}+{\rm i}X{\mathfrak u}_3^{(2)}\bigr)&: \overline{{\rm i}X(0,0,D_3)^\top}
=\int_Q A \bigl(\sym \nabla {\mathfrak u}_4^{(2)}+ {\rm i}X{\mathfrak u}_3^{(2)}\bigr): \overline{ \sym \nabla \psi} \\[0.4em]
&=A^{\textrm{hom},1}_{\chi}m_3^{(1)}\,\overline{D}_3
+|\chi|^4\int_Q\bigl({\rm i}\chi_1 x_3 m_3^{(1)}, {\rm i}\chi_2 x_3m_3^{(1)},0\bigr)^\top \cdot \overline{({\rm i}\chi_1 x_3 D_3, {\rm i}\chi_2 x_3D_3,0)^\top}\\[0.4em]
&=-|\chi|^4 m_3^{(1)}\,\overline{D}_3-\int_Q A \bigl(\sym \nabla {\mathfrak u}_4^{(1)}+{\rm i}X{\mathfrak u}_3^{(1)}\bigr): \overline{{\rm i}X(0,0,D_3)^\top},
\end{align*}
where we use \eqref{sinisa1500oo}, \eqref{sinisa1104oo}. Thus, the problem \eqref{sinisa1022oo} has a unique solution, and 
\begin{equation} \label{julian104} 
\bigl\|{\mathfrak u}_5^{(1)}\bigr\|_{H^1(Q, {\mathbb C}^3)} \leq C |\chi|^5\|f
\|_{L^2(Q, {\mathbb C}^3)}.
\end{equation}
{\it Step 3.} 
We once again update the leading-order term $\bigl(-\chi_1 x_3 m_3^{(2)},-{\rm i}\chi_2 x_3 m_3^{(2)},m_3^{(2)}\bigr)^\top$. Defining $m_3^{(2)}$ so that
\begin{align} \label{sinisa1500}
A^{\textrm{hom},1}_{\chi}m_3^{(2)}\,\overline{d_3} 
&+|\chi|^4\int_Q\bigl(-{\rm i}\chi_1 x_3 m_3^{(2)}, -{\rm i}\chi_2 x_3m_3^{(2)},m_3^{(2)}\bigr)^\top \cdot \overline{(-{\rm i}\chi_1 x_3 d_3, -{\rm i}\chi_2 x_3d_3,d_3)^\top}
\\[0.3em] \nonumber & 
=-\int_Q A \bigl(\sym \nabla {\mathfrak u}_5^{(1)}+{\rm i}X{\mathfrak u}_4^{(1)}+{\rm i}X{\mathfrak u}_4^{(2)}\bigr): \overline{{\rm i}X(0,0,d_3)^\top}
\quad\quad\forall d_3\in{\mathbb C},
\end{align}
one has
\begin{equation}\label{julian105} 
\bigl|m_3^{(2)}\bigr| \leq C|\chi|^2\|f
\|_{L^2(Q, {\mathbb C}^3)}.
\end{equation}
As in Step 2, we repeat the procedure and first define ${\mathfrak u}_4^{(3)} \in H^1_{\#} (Q, \mathbb{C}^3)$ as the solution to ({\it cf.} (\ref{sinisa1120}))
\begin{equation*}
(\sym \nabla)^{*} A \sym \nabla {\mathfrak u}_4^{(3)}=(\sym \nabla)^{*}A\bigl({\rm i}x_3 \Upsilon\bigl(\chi,m_3^{(2)}\bigr)\bigr),\quad\quad   \int_Q {\mathfrak u}_4^{(3)}=0,
\end{equation*}
so that
\begin{equation} \label{julian110} 
\bigl\|{\mathfrak u}_4^{(3)}\bigr\|_{H^1(Q, {\mathbb C}^3)} \leq C |\chi|^4\|f
\|_{L^2(Q, {\mathbb C}^3)}.
\end{equation}
Second, consider ${\mathfrak u}_5^{(2)} \in  H^1_{\#} (Q, \mathbb{C}^3)$ that satisfies ({\it cf.} (\ref{sinisa1104oo}))
\begin{equation}
\begin{aligned}
(\sym \nabla)^{*}A \sym \nabla  {\mathfrak u}_5^{(2)}&= {\rm i}\bigl\{X^{*}A \sym \nabla {\mathfrak u}_4^{(3)}-(\sym \nabla)^{*} A\bigl(X{\mathfrak u}_4^{(3)}\bigr)
 -X^{*}A\bigl({\rm i}x_3\Upsilon\bigl(\chi,m_3^{(2)}\bigr)\bigr)\bigr\}
\\[0.7em]  &
+|\chi|^4\bigl({\rm i}\chi_1 x_3 m_3^{(2)}, {\rm i}\chi_2 x_3m_3^{(2)},0\bigr)^\top,\quad \int_Q {\mathfrak u}_5 ^{(2)}=0.
\end{aligned}
 \label{sinisa1104}
\end{equation}
In the same way as above, we infer that the system has a unique solution, and 
\begin{equation}\label{julian111} 
\bigl\|{\mathfrak u}_5^{(2)}\bigr\|_{H^1(Q, {\mathbb C}^3)} \leq C |\chi|^4\|f
\|_{L^2(Q, {\mathbb C}^3)}.
\end{equation}
Finally, we define ${\mathfrak u}_6 \in  H^1_{\#} (Q, \mathbb{C}^3)$ as the solution to ({\it cf.} (\ref{sinisa1022oo}))
\begin{equation}
\label{sinisa1022} 
\begin{aligned}
(\sym \nabla)^{*}&A \sym \nabla {\mathfrak u}_6= {\rm i}\Bigl(X^{*}A \sym \nabla\bigl({\mathfrak u}_5^{(1)}+{\mathfrak u}_5^{(2)}\bigr)-(\sym \nabla)^{*}A\bigl\{X\bigl({\mathfrak u}_5^{(1)}+{\mathfrak u}_5^{(2)}\bigr)\bigr\}\Bigr)
\\[0.5em]
& 
-X^{*}A\bigl\{X\bigl({\mathfrak u}_4^{(1)}+{\mathfrak u}_4^{(2)}+{\mathfrak u}_4^{(3)}\bigr)\bigr\}
-|\chi|^4\bigl(0,0,m_3^{(2)}\bigr)^\top-|\chi|^4 \mathfrak{u}_2,
\quad\quad \int_Q {\mathfrak u}_6=0.
\end{aligned}
\end{equation}
The right-hand side of the equation for ${\mathfrak u}_6$ yields zero when tested with vectors $D=(D_1,D_2,0)^\top$, by symmetry properties as above. To see that it yields zero when tested with vectors $(0,0,D_3)^\top,$ we use the same reasoning as above ({\it cf.} (\ref{psi_id})):
\begin{equation*}
\begin{aligned}
\int_Q A &\bigl(\sym \nabla {\mathfrak u}_5^{(2)}+{\rm i}X{\mathfrak u}_4^{(3)} \bigr): \overline{{\rm i}X(0,0,D_3)^\top}
=\int_Q A \bigl(\sym \nabla {\mathfrak u}_5^{(2)}+ {\rm i}X{\mathfrak u}_4^{(3)}\bigr): \overline{ \sym \nabla \psi} \\[0.4em]
&=A^{\textrm{hom},1}_{\chi}m_3^{(2)}\,\overline{D}_3
+|\chi|^4\int_Q\bigl({\rm i}\chi_1 x_3 m_3^{(2)}, {\rm i}\chi_2 x_3m_3^{(2)},0\bigr)^\top \cdot \overline{({\rm i}\chi_1 x_3 D_3, {\rm i}\chi_2 x_3D_3,0)^\top}\\[0.4em]
&=-|\chi|^4 m_3^{(2)}\,\overline{D}_3-\int_Q A \bigl(\sym \nabla {\mathfrak u}_5^{(1)}+{\rm i}X{\mathfrak u}_4^{(1)}+{\rm i}X{\mathfrak u}_4^{(2)}\bigr): \overline{{\rm i}X(0,0,D_3)^\top},
\end{aligned}
\end{equation*}
where we use \eqref{sinisa1500}, \eqref{sinisa1104}. Thus, the problem \eqref{sinisa1022} has a unique solution, and 
\begin{equation} \label{julian112} 
\bigl\|{\mathfrak u}_6\bigr\|_{H^1(Q, {\mathbb C}^3)} \leq C |\chi|^6\|f
\|_{L^2(Q, {\mathbb C}^3)}.
\end{equation}
\vskip 0.2cm
{\it Step 4.} To complete the proof, we define the approximate solution 
\begin{align*}
	{\mathfrak U}&=\bigl(-{\rm i}\chi_1 x_3\bigl(m_3+m_3^{(1)}+m_3^{(2)}\bigr), -{\rm i}\chi_2 x_3\bigl(m_3+m_3^{(1)}+m_3^{(2)}\bigr),m_3+m_3^{(1)}+m_3^{(2)}\bigr)^\top
	+{\mathfrak u}_2+{\mathfrak u}_3^{(1)}+{\mathfrak u}_3^{(2)}\\[0.6em]
	& +{\mathfrak u}_4^{(1)}+{\mathfrak u}_4^{(2)}+{\mathfrak u}_4^{(3)}+{\mathfrak u}_5^{(1)}+{\mathfrak u}_5^{(2)}+{\mathfrak u}_6, 
\end{align*}
which satisfies 
$$ (\sym \nabla+{\rm i}X)^{*}A (\sym \nabla+{\rm i}X){\mathfrak U}+|\chi|^4 {\mathfrak U}=|\chi|^4\overline{e_\chi}\bigl(\widecheck{f\,}\!\!_1,\widecheck{f\,}\!\!_2,\widehat{f}_3\bigr)^\top+R_7, $$
where 
$$\bigl\| R_7\bigr\|_{H^{-1}_{\#}(Q)} \leq C|\chi|^7\|f
\|_{L^2(Q, {\mathbb C}^3)}.$$
The error $z:=u\overline{e_\chi} 
-{\mathfrak U}$  satisfies 
\begin{equation} 
\label{revision10000} 
(\sym \nabla+{\rm i}X)^{*} A (\sym \nabla+{\rm i}X)z+|\chi|^4 z=R_7.
\end{equation}
It is easy to see that, due to the estimate (\ref{bukal1000}), there exists $\rho_1>0$ such that for all $\chi\in Q'_{\rm r},$ $\vert\chi\vert\le \rho_1,$ one has 
$$
\bigl\| A({\rm sym}\nabla +{\rm i}X)z\bigr\|^2_{L^2(Q, {\mathbb C}^{3\times3})} \geq \frac{1}{2}\bigl(\|{\rm sym}\nabla z\|^2_{L^2(Q, {\mathbb C}^{3\times3})}- \|z\|^2_{L^2(Q, {\mathbb C}^3)}\bigr),
$$
and thus 
\begin{equation*}
\bigl\| A({\rm sym}\nabla+{\rm i}X)z\bigr\|^2_{L^2(Q, {\mathbb C}^{3\times3})}+|\chi|^4 \|z\|^2_{L^2(Q, {\mathbb C}^3)} \geq 
\frac{1}{4}|\chi|^4\bigl(\|{\rm sym}\nabla z\|^2_{L^2(Q, {\mathbb C}^{3\times3})}+\|z\|^2_{L^2(Q, {\mathbb C}^3)}\bigr).     
\end{equation*}
Testing the equation \eqref{revision10000} with $z$ therefore yields
\begin{equation*} 
\|z\|_{H^1(Q, {\mathbb C}^3)} \leq C|\chi|^3\|f
\|_{L^2(Q, {\mathbb C}^2)}.
\end{equation*}
Finally, as a consequence of  \eqref{sinisa1002}, \eqref{julian101}, \eqref{julian100}, \eqref{julian102}, \eqref{julian103}, \eqref{julian104}, \eqref{julian105}, \eqref{julian110}, \eqref{julian111}, \eqref{julian112}, we obtain 
\begin{equation}
\begin{aligned}
\|\overline{e_\chi}  u_\alpha+{\rm i}\chi_\alpha m_3 x_3
\|_{H^1(Q, {\mathbb C}^3)} & \leq C|\chi|^2\|f
\|_{L^2(Q, {\mathbb C}^3)} ,\qquad\alpha=1,2,
\\[0.7em]
\| \overline{e_\chi}  u_3-m_3 
\|_{H^1(Q, {\mathbb C}^3)} & \leq  C|\chi|\|f
\|_{L^2(Q, {\mathbb C}^3)},
\\[0.5em]
\bigl\|\overline{e_\chi}  u_\alpha+{\rm i}\chi_\alpha\bigl(m_3+m_3^{(1)}\bigr)x_3
-(\mathfrak{u}_2)_{\alpha}\bigr\|_{H^1(Q, {\mathbb C}^3)} & \leq C|\chi|^3\|f
\|_{L^2(Q, {\mathbb C}^3)} ,\qquad\alpha=1,2,
\\[0.5em]
\bigl\|\overline{e_\chi}  u_3-\bigl(m_3+m_3^{(1)}\bigr)\bigr\|_{H^1(Q, {\mathbb C}^3)} & \leq C|\chi|^2\|f
\|_{L^2(Q, {\mathbb C}^3)},
\end{aligned}
\label{korrre1}
\end{equation}
where $m_3 \in \mathbb{C}$ solves 
\eqref{sinisa1000revision}, $m_3^{(1)} \in \mathbb{C}$ solves \eqref{sinisa1500oo}, and $\mathfrak{u}_2$ solves \eqref{sinisa1001}. For $|\chi|>\rho_1$ the estimates (\ref{korrre1}) hold automatically (where the constant $C$ depends on $\rho_1$).
\begin{remark} \label{korekcijajed} 	
	Denote by $\widetilde m _3 \in \mathbb{C}$ the solution to the identity
	\begin{equation*}
	\label{sinisa1000revision000} 
	\bigl(\vert\chi\vert^{-4}A^{\textrm{\rm hom},1}_{\chi}+1\bigr)\widetilde m _3\,\overline{d_3} 
	=\int_Q|\chi|^{-1}\bigl(\widecheck{f\,}\!\!_1, \widecheck{f\,}\!\!_2\bigr)^\top \cdot \overline{e_\chi(-{\rm i}\chi_1 x_3 d_3, -{\rm i}\chi_2 x_3d_3)^\top}+\int_Q \widehat{f\,}\!\!_3 \cdot \overline{e_\chi d_3} \quad\quad\forall d_3\in{\mathbb C}.
	\end{equation*}
	Setting $d_3=m_3-\widetilde m _3$ in \eqref{sinisa1000revision} and \eqref{sinisa1000revision000}, subtracting one from the other, and using \eqref{propertiesrevision}, we obtain
	$$ |m_3-\widetilde m _3 | \leq  C|\chi|^2\|f
	\|_{L^2(Q, {\mathbb C}^3)}. $$ 
	This implies that all four estimates in \eqref{korrre1}
	are valid with $m_3$ replaced by $\widetilde{m}_3.$ In the same way we can replace $m_3^{(1)}$ in \eqref{sinisa1500oo} with $\widetilde m_3^{(1)}$ that satisfies 
	\begin{equation} 
	\label{marita2} 
	\bigl(A^{\textrm{\rm hom},1}_{\chi}+|\chi|^4\bigr)\widetilde m_3^{(1)}\,\overline{d_3} 
	=-\int_Q A \bigl(\sym \nabla {\mathfrak u}_4^{(1)}+{\rm i}X{\mathfrak u}_3^{(1)}\bigr): \overline{{\rm i}X(0,0,d_3)^\top}\quad\quad\forall d_3\in{\mathbb C},
	\end{equation} 
	thereby making an error of order $|\chi|^2$ at most. This also does not affect any of the estimates \eqref{korrre1}.
\end{remark} 

\subsubsection{Second invariant subspace} 
\label{section_drugaasimptotika}
\textbf{Asymptotic equation.} Here assume that the components $f_\alpha=\widehat{f}_{\alpha},$ $\alpha=1,2,$ are even in the $x_3$ variable, while $f_3=\widecheck{f\,}\!\!_3$ is odd, {\it cf.} Section \ref{section_secondsubspace}.
An approximating problem for $(m_1, m_2)^\top$ in the estimates (\ref{korrre1oo}) then takes the form
\begin{equation}
\begin{aligned}
\bigl(A^{\textrm{hom},2}_{\chi} +|\chi|^2\bigr)(m_1,m_2)^\top\cdot\overline{(d_1,d_2)^\top}
=|\chi|^2\int_Q\bigl(\widehat{f}_1, \widehat{f}_2\bigr)^\top \cdot \overline{e_\chi(d_1,d_2)^\top}
\quad\forall(d_1, d_2)^\top\in{\mathbb C}^2,
\end{aligned}
\label{sinisa3*second}
\end{equation}
and the following estimate holds:
\begin{equation} \label{revision1111} 
\bigl|(m_1, m_2)^\top\bigr| \leq C \bigl\| (\widehat{f}_1, \widehat{f}_2)^\top\bigr\|_{L^2(Q, \mathbb{C}^2)}. 
\end{equation} 
\vskip 0.1cm
\noindent\textbf{Approximation error estimates in the second invariant subspace.} 
{\it Step 1.}
First, we define ${\mathfrak u}_1 \in H^1_{\#} (Q, \mathbb{C}^3)$ so that 
\begin{equation}\label{revision1003} 
({\rm sym}\nabla)^{*}A\,{\rm sym}\nabla {\mathfrak u}_1=-({\rm sym}\nabla)^{*}A\Xi(\chi, m_1,m_2),\quad\quad\int_Q {\mathfrak u}_1=0.           
\end{equation} 
As in the case of $\mathfrak{u}_2$ for the first invariant subspace, we infer from \eqref{revision1111} that 
\begin{equation}\label{sinisa10revision} 
\|{\mathfrak u}_1\|_{H^1(Q, {\mathbb C}^3)} \leq C|\chi| \bigl\|(\widehat{f}_1,\widehat{f}_2)^\top\bigr\|_{L^2(Q, {\mathbb C}^2)}. 
\end{equation}
Notice that ${\mathfrak u}_1 $ has the symmetry properties
\begin{equation} 
\label{symmetryuagain2} 
({\mathfrak u}_{1})_{\alpha}(\cdot,-x_3)=({\mathfrak u}_{1})_{\alpha} (\cdot,x_3), \quad\alpha=1,2,\qquad 
\quad({\mathfrak u}_{1})_{3}(\cdot,-x_3)=-({\mathfrak u}_{1})_{3} (\cdot,x_3), \qquad x_3\in I.
\end{equation} 

In the remainder of the present section we write $f$ in place of $(\widehat{f}_1,\widehat{f}_2,\widecheck{f\,}\!\!_3)^\top.$ Define ${\mathfrak u}_2\in H^1_{\#} (Q, \mathbb{C}^3)$ that satisfies 
\begin{equation}
\begin{aligned} 
({\rm sym}\nabla)^{*}A\,{\rm sym}\nabla  {\mathfrak u}_2^{(1)}={\rm i}\bigl\{X^{*}A\,{\rm sym}\nabla {\mathfrak u}_1&-({\rm sym}\nabla)^{*}A(X{\mathfrak u}_1)+X^{*} A \Xi (\chi,m_1,m_2)\bigr\}\\[0.7em] 
&-|\chi|^2 (m_1,m_2,0)^\top +|\chi|^2 \overline{e_\chi}f,
\quad\quad \int_Q {\mathfrak u}_2^{(1)}=0.
\end{aligned} 
\label{sinisa101pon2}
\end{equation}
It follows from \eqref{sinisa3*second} that the right-hand side of \eqref{sinisa101pon2} yields zero when tested with constant vectors $(D_1,D_2,0)^\top$. Furthermore, it also vanishes when tested with vectors $(0,0,D_3)^\top,$ by the same argument as before, {\it i.e.} as a consequence of the symmetry properties of $\mathfrak{u}_1$ and $f.$ 
We also have the estimate
\begin{equation}\label{revision11111}
\bigl\|{\mathfrak u}_2^{(1)}\bigr\|_{H^1(Q, {\mathbb C}^3)} \leq C|\chi|^2\|f
\|_{L^2(Q, {\mathbb C}^3)}.
\end{equation}
\vskip 0.2cm
\noindent{\it Step 2.}
Second, we update $(m_1,m_2)^\top$ with $(m_1^{(1)},m_2^{(1)})^\top\in{\mathbb C}^2,$ defined as the solution to
\begin{equation}
\begin{aligned}
\bigl(A^{\textrm{hom},2}_{\chi}+|\chi|^2\bigr)\bigl(m_1^{(1)}&, m_2^{(1)}\bigr)^\top\cdot\overline{(d_1,d_2)^\top}
\\[0.5em]
&=-\int_QA\bigl(\sym \nabla \mathfrak{u}_2^{(1)}+{\rm i}X\mathfrak{u}_1\bigr): \overline{{\rm i}X(d_1,d_2,0)^\top}
\quad\forall(d_1, d_2)^\top\in{\mathbb C}^2.
\end{aligned}
\label{sinisa3*secondoo}
\end{equation}
The following estimate is a consequence of  \eqref{sinisa10revision}, \eqref{revision11111}:
\begin{equation} \label{julian1000}
\bigl|\bigl(m_1^{(1)}, m_2^{(1)}\bigr)^\top\bigr| \leq C|\chi|\|f 
\|_{L^2(Q, \mathbb{C}^3)}. 
\end{equation} 
Next, we define 
\begin{equation*}
({\rm sym}\nabla)^{*}A\,{\rm sym}\nabla {\mathfrak u}_2^{(2)}=-({\rm sym}\nabla)^{*}A\Xi\bigl(\chi, m_1^{(1)},m_2^{(1)}\bigr),\quad\quad\int_Q {\mathfrak u}_2^{(2)}=0,        
\end{equation*}
and, as a consequence of \eqref{julian1000}, we have 
\begin{equation*}
\bigl\|{\mathfrak u}_2^{(2)}\bigr\|_{H^1(Q, {\mathbb C}^3)} \leq C|\chi|^2\|f
\|_{L^2(Q, {\mathbb C}^3)}.
\end{equation*} 

Furthermore, we define $\mathfrak{u}_3\in H^1_{\#}(Q, \mathbb{C}^3)$ such that
\begin{equation*}
\begin{aligned} 
({\rm sym}\nabla)^{*}A\,{\rm sym}\nabla  {\mathfrak u}_3&= {\rm i}\Bigl(X^{*}A\,{\rm sym}\nabla \bigl({\mathfrak u}_2^{(1)}+{\mathfrak u}_2^{(2)}\bigr)-({\rm sym}\nabla)^{*}A\bigl\{X\bigl({\mathfrak u}_2^{(1)}+{\mathfrak u}_2^{(2)}\bigr)\bigr\}
+X^{*} A \Xi\bigl(\chi,m_1^{(1)},m_2^{(1)}\bigr)\Bigr)\\[0.7em] 
&
+{\rm i}X^{*}A(X\mathfrak{u}_1)
-|\chi|^2\bigl(m_1^{(1)},m_2^{(1)},0\bigr)^\top-|\chi|^2 \mathfrak{u}_1,\quad\quad\quad \int_Q {\mathfrak u}_3=0.
\end{aligned} 
\end{equation*}
The right-hand side of the above equation for ${\mathfrak u}_3$
yields zero when tested with vectors $D=(0,0,D_3)^\top$, by the symmetry properties (\ref{symmetryuagain2}). To see that it vanishes when tested with vectors $(D_1,D_2,0)^\top,$ we use the fact that
\begin{align*}
\int_Q A \bigl(\sym \nabla {\mathfrak u}_2^{(2)}&+\Xi\bigl(\chi,m_1^{(1)},m_2^{(1)}\bigr) \bigr): \overline{{\rm i}X(D_1,D_2,0)^\top}
=A^{\textrm{hom},2}_{\chi}\bigl(m_1^{(1)},m_2^{(1)}\bigr)^\top\cdot\overline{(D_1,D_2)^\top} \\[0.3em] 
&=-\int_Q A(\sym \nabla \mathfrak{u}_2^{(1)}+{\rm i}X\mathfrak{u}_1): \overline{{\rm i}X(D_1,D_2,0)^\top}
-|\chi|^2\bigl(m_1^{(1)},m_2^{(1)},0\bigr)^\top.
\end{align*}
Finally, we obtain the estimate
\begin{equation*}
\|{\mathfrak u}_3\|_{H^1(Q, {\mathbb C}^3)} \leq C|\chi|^3\|f
\|_{L^2(Q, {\mathbb C}^3)}.
\end{equation*} 
\\
{\it Step 3.}
Define the approximate solution 
$$
{\mathfrak U}=\bigl(m_1+m_1^{(1)},m_2+m_2^{(1)},0\bigr)^\top+{\mathfrak u}_1+{\mathfrak u}_2^{(1)}+{\mathfrak u}_2^{(2)} +{\mathfrak u}_3, $$
which clearly satisfies 
\begin{equation*}
\begin{aligned}
({\rm sym}\nabla+{\rm i}X)^{*}A ({\rm sym}\nabla+{\rm i}X){\mathfrak U}+|\chi|^2 {\mathfrak U}=|\chi|^2 \overline{e_\chi}f
+R_4,\\[0.75em]
\qquad \| R_4
\|_{H^{-1}_{\#}(Q, {\mathbb C}^3)} \leq C|\chi|^4\|f
\|_{L^2(Q, {\mathbb C}^3)}.
\end{aligned}
\end{equation*}
It follows that the error  $z:=u\overline{e_\chi} 
-{\mathfrak U}$ satisfies
\begin{equation*}   ({\rm sym}\nabla+{\rm i}X)^{*}A ({\rm sym}\nabla+{\rm i}X)z+|\chi|^2 z=R_4,
\end{equation*} 
and hence, in the same way as before, we obtain (see the argument between (\ref{revision10000}) and (\ref{korrre1}))
\begin{equation*} 
\|z\|_{H^1(Q, {\mathbb C}^3)} \leq C|\chi|^2\|f
\|_{L^2(Q, {\mathbb C}^3)}.
\end{equation*}
It follows that
\begin{equation}
\begin{aligned}
\|\overline{e_\chi}  u_\alpha-m_{\alpha}
\|_{H^1(Q, {\mathbb C}^3)} & \leq C|\chi|\|f
\|_{L^2(Q, {\mathbb C}^3)} ,\qquad\alpha=1,2,
\\[0.7em]
\| \overline{e_\chi}  u_3
\|_{H^1(Q, {\mathbb C}^3)} & \leq  C|\chi|\|f
\|_{L^2(Q, {\mathbb C}^3)},
\\[0.6em]
\bigl\|\overline{e_\chi}  u_\alpha-\bigl(m_{\alpha}+m_{\alpha}^{(1)}\bigr)
-(\mathfrak{u}_1)_{\alpha}\bigr\|_{H^1(Q, {\mathbb C}^3)} & \leq C|\chi|^2\|f
\|_{L^2(Q, {\mathbb C}^3)} ,\qquad\alpha=1,2,
\\[0.6em]
\|\overline{e_\chi}  u_3-(\mathfrak{u}_1)_{3}\|_{H^1(Q, {\mathbb C}^3)} & \leq C|\chi|^2\|f
\|_{L^2(Q, {\mathbb C}^3)}. 
\end{aligned}
\label{korrre1oo}
\end{equation}
\begin{remark} 
\label{f3_omitted}
1. One can set $\widecheck{f\,}\!\!_3=0$ when deriving
the first two estimates in (\ref{korrre1oo}), 
by virtue of the inequalities (\ref{additional_est}) and the fact that $m_1,$ $m_2$ 
do not depend on $\widecheck{f\,}\!\!_3,$ see (\ref{sinisa3*second}).

2. The method of this section is also applicable to the problem of the ``bulk'' 3D elasticity, and thus by a similar analysis we can recover the result of \cite{BirmanSuslina}. As already mentioned, the novelty in the plate case is the presence of eigenvalue of order $|\chi|^4.$ This requires further and more complex analysis, which is presented in Section 5.1.1.
\end{remark}

\subsection{General elasticity  tensor} 
\label{gen_est_section}

In this section we do not impose the symmetry conditions (\ref{symmetry}) and, guided by the results of Section \ref{structure}, consider two different scalings for the operator ${\mathcal A}_\chi$ with respect to the quasimomentum $\chi.$

\subsubsection{First scaling}
\label{first_scaling_section}

As the asymptotic procedure of Section \ref{planar} shows, in the case of the first scaling, which corresponds to the eigenvalue of order $|\chi|^4$ of ${\mathcal A}_\chi$ ({\it cf.} also Remark \ref{expl2}), it is convenient to scale the horizontal components of the ``Floquet-transformed" force density $f$, see (\ref{sinisa1000revision}), so our analysis below yields the estimates (\ref{korrre1oo2}), similar to (\ref{korrre1}). 
In Section \ref{gen_case_sec} we interpret these estimates in terms of the original body forces $F^h$ ({\it i.e.} the inverse Floquet transform of the ``unscaled" $f$) for the proof of Theorem \ref{main_result_general} ({\it cf.} Theorem \ref{main_result_first}, where for the case of the first invariant subspace we keep the general scaling $\varepsilon^{-\delta}$ in the first two components of the forces.) 

\vskip 0.2cm
\noindent\textbf{Asymptotic equation.} An approximating problem for \eqref{scalingjedan} takes the form 
\begin{equation}
\begin{aligned}
&\vert\chi\vert^{-4}A^{\textrm{hom}}_{\chi}m\cdot\overline{d} 
+\int_Q (m_1-{\rm i}\chi_1 x_3 m_3, m_2-{\rm i}\chi_2 x_3m_3,m_3)^\top \cdot \overline{(d_1-{\rm i}\chi_1 x_3 d_3, d_2-{\rm i}\chi_2 x_3d_3,d_3)^\top}
\\[0.3em] 
&=\int_Q|\chi|^{-1}\bigl(f\,\!\!_1, f\,\!\!_2\bigr)^\top \cdot \overline{e_\chi(d_1-{\rm i}\chi_1 x_3 d_3,d_2 -{\rm i}\chi_2 x_3d_3)^\top}+\int_Q f\,\!\!_3 \cdot \overline{e_\chi d_3} \quad\quad\forall d\in \mathbb{C}^3.
\end{aligned}
\label{korekcija11111}
\end{equation}
In the same way as in Remark \ref{korekcijajed}, it can be shown that (\ref{korekcija11111}) is equivalent to the identity
\begin{equation}
\label{marita10000}  
\bigl(\vert\chi\vert^{-4}A^{\textrm{hom}}_{\chi}+1\bigr)m\cdot\overline{d} 
=\int_Q|\chi|^{-1}\bigl(f\,\!\!_1, f\,\!\!_2\bigr)^\top \cdot \overline{e_\chi(d_1-{\rm i}\chi_1 x_3 d_3,d_2 -{\rm i}\chi_2 x_3d_3)^\top}+\int_Q f\,\!\!_3 \cdot \overline{e_\chi d_3} \quad\quad\forall d\in{\mathbb C}^3,
\end{equation}
in the sense that it does not affect any of the four inequalities \eqref{korrre1oo2}.
Using \eqref{muic1}, 
we easily obtain the estimates
\begin{equation}
\bigl|(m_1,m_2)^\top\bigr|\leq C|\chi| \|f\|_{L^2(Q, \mathbb{C}^3)},\qquad\quad
|m_3| \leq C \|f\|_{L^2(Q, \mathbb{C}^3)}.
\label{muic4}
\end{equation}
	
\vskip 0.2cm
\noindent{\bf Approximation error estimates for the first scaling.} 
\textbf{\it Step 1.}
In order to determine the ``corrector'' term ${\mathfrak u}_2$, we solve 
\begin{equation} 
({\rm sym}\nabla)^{*}A ({\rm sym}\nabla)  {\mathfrak u}_2=-({\rm sym}\nabla)^{*}A\bigl(\Xi(\chi, m_1,m_2)-{\rm i}x_3 \Upsilon(\chi,m_3)\bigr),\ \ \ \ 
{\mathfrak u}_2 \in H^1_{\#}(Q, \mathbb{C}^3), \quad\int_{Q} {\mathfrak u}_2=0,   
\label{otto1revision0000}       
\end{equation}
so that, due to 
\eqref{muic4},
the estimate
\begin{equation} 
\label{revision10020000}
\|{\mathfrak u}_2\|_{H^1(Q, {\mathbb C}^3)} \leq C\vert \chi \vert^2\|f\|_{L^2(Q, {\mathbb C}^3)}
\end{equation}
holds. 

Next, we define $\mathfrak{u}_3^{(1)} \in H^1_{\#}(Q, \mathbb{C}^3)$ as the solution to
\begin{equation}
\label{marita10002}
\begin{aligned}
({\rm sym}\nabla)^{*} &A\,{\rm sym}\nabla{\mathfrak u}_3^{(1)} ={\rm i}\bigl\{X^*A\,{\rm sym}\nabla  {\mathfrak u}_2
-({\rm sym}\nabla)^{*}A(X{\mathfrak u}_2)+X^*A\bigl(\Xi(\chi, m_1,m_2)-{\rm i}x_3 \Upsilon(\chi,m_3)\bigr)\bigr\}\\[0.6em] 
& +|\chi|^3\overline{e_\chi}(f_1,f_2, 0)^\top
-|\chi|^4(m_1-{\rm i}\chi_1 x_3 m_3,m_2-{\rm i}\chi_2 x_3 m_3,0)^\top,\quad
\quad\int_{Q} {\mathfrak u}_3^{(1)}=0.
\end{aligned}
\end{equation}
Notice that, due to \eqref{korekcija11111}, \eqref{otto1revision0000}, the right-hand side in the definition of ${\mathfrak u}_3^{(1)}$  vanishes when tested with constant vectors 
$(D_1,D_2,D_3)^\top$, where we again follow the reasoning of (\ref{psi_id}).
Due to  \eqref{muic4}, \eqref{revision10020000}, we also have the estimate
\begin{equation*}
\bigl\|\mathfrak{u}_3^{(1)}\bigr\|_{H^1(Q,\mathbb{C}^3)} \leq C|\chi|^3 \|f\|_{L^2(Q, \mathbb{C}^3)}. 
\end{equation*}
\vskip 0.2cm
\noindent{\it Step 2.} We update $m \in \mathbb{C}^3$ with $m^{(1)}=\bigl(m^{(1)}_1, m^{(1)}_2, m^{(1)}_3\bigr)^\top\in \mathbb{C}^3$ such that
\begin{equation}
\begin{aligned}
&A^{\textrm{hom}}_{\chi} m^{(1)}\cdot\overline{d}+|\chi|^4\int_Q\bigl(m_1^{(1)}-{\rm i}\chi_1 x_3 m_3^{(1)},m_2^{(1)}-{\rm i}\chi_2 x_3 m_3^{(1)}, m_3^{(1)}\bigr)^\top\cdot \overline{(d_1-{\rm i}\chi_1 x_3 d_3,d_2-{\rm i}\chi_2 x_3 d_3,d_3)^\top}\\[0.5em]
&=-\int_QA(\sym \nabla \mathfrak{u}_3^{(1)}+{\rm i}X\mathfrak{u}_2): \overline{\bigl(\Xi(\chi, d_1,d_2)-{\rm i}x_3 \Upsilon (\chi,d_3) \bigr)}
\qquad\forall d=(d_1, d_2,d_3)^\top\in{\mathbb C}^3.
\end{aligned}
\label{marita10001}
\end{equation}
It is straightforward to see that
\begin{equation*}
\bigl|\bigl(m_1^{(1)}, m_2^{(1)}\bigr)^\top\bigr|\leq C|\chi|^2 \|f\|_{L^2(Q,\mathbb{C}^3)}, \quad \bigl|m_3^{(1)}\bigr|\leq C|\chi| \|f\|_{L^2(Q,\mathbb{C}^3)}. 
\end{equation*}
Furthermore, we define ${\mathfrak u}_3^{(2)}\in H^1_{\#}(Q, \mathbb{C}^3)$ as the solution to the problem
\begin{equation*} 
({\rm sym}\nabla)^{*}A\,{\rm sym}\nabla {\mathfrak u}_3^{(2)}=-({\rm sym}\nabla)^{*}A\bigl(\Xi\bigl(\chi, m_1^{(1)},m_2^{(1)}\bigr)-{\rm i}x_3\Upsilon\bigl(\chi,m_3^{(1)}\bigr)\bigr), \quad 
\int_{Q} {\mathfrak u}_3^{(2)}=0,    
\end{equation*}
so that, in particular, the following bound holds:
\begin{equation*}
\bigl\| \mathfrak{u}_3^{(2)}\bigr\|_{H^1(Q,\mathbb{C}^3)}\leq C|\chi|^3 \|f\|_{L^2(Q, \mathbb{C}^3)}.
\end{equation*}
Next we define ${\mathfrak u}_4^{(1)} \in H^1_{\#}(Q, \mathbb{C}^3)$ as the solution to
\begin{equation}
\label{josss2}
\begin{aligned}
&({\rm sym}\nabla)^{*}A\,{\rm sym}\nabla{\mathfrak u}_4^{(1)}= {\rm i}\Bigl(X^*A\,{\rm sym}\nabla\bigl({\mathfrak u}_3^{(1)}+{\mathfrak u}_3^{(2)}\bigr)
-({\rm sym}\nabla)^{*}A\bigl\{X\bigl({\mathfrak u}_3^{(1)}+{\mathfrak u}_3^{(2)}\bigr)\bigr\}\\[0.7em]
&+X^*A\bigl(\Xi\bigl(\chi, m_1^{(1)},m_2^{(1)}\bigr)-{\rm i}x_3\Upsilon\bigl(\chi, m_3^{(1)}\bigr)\bigr)\Bigr)
+{\rm i}X^{*} A(X\mathfrak{u}_2)
+|\chi|^4\overline{e_\chi}(0,0,f_3)^\top
\\[0.6em]
&-|\chi|^4\bigl(m_1^{(1)}+(\mathfrak{u}_2)_{1}-{\rm i}\chi_1 x_3 m_3^{(1)},m_2^{(1)}+(\mathfrak{u}_2)_{2}-{\rm i}\chi_2 x_3 m_3^{(1)},m_3\bigr)^\top,
\qquad\quad\int_{Q} {\mathfrak u}_4^{(1)}=0.
\end{aligned}
\end{equation}
As before, the right-hand side of \eqref{josss2} vanishes when tested with constant vectors, 
in view of the identity \eqref{ljulj1}. Thus \eqref{josss2} has a unique solution, and 
\begin{equation*}
\bigl\| \mathfrak{u}_4^{(1)}\bigr\|_{H^1(Q,\mathbb{C}^3)}\leq C|\chi|^4 \|f\|_{L^2(Q, \mathbb{C}^3)}.
\end{equation*}
\vskip 0.2cm
\noindent{\it Step 3.}  We again update $m \in \mathbb{C}^3$ with $m^{(2)} \in \mathbb{C}^3$ such that 
\begin{equation*}
\begin{aligned}
&A^{\textrm{hom}}_{\chi} m^{(2)}\cdot\overline{d}+|\chi|^4\int_Q\bigl(m_1^{(2)}-{\rm i}\chi_1 x_3 m_3^{(2)},m_2^{(2)}-{\rm i}\chi_2 x_3 m_3^{(2)}, m_3^{(2)}\bigr)^\top\cdot \overline{(d_1-{\rm i}\chi_1 x_3 d_3,d_2-{\rm i}\chi_2 x_3 d_3,d_3)^\top}\\[0.8em]
&=-\int_Q A\bigl(\sym \nabla \mathfrak{u}_4^{(1)}+{\rm i}X\mathfrak{u}_3^{(1)}+{\rm i}X\mathfrak{u}_3^{(2)}\bigr): \overline{\bigl(\Xi(\chi, d_1,d_2)-{\rm i}x_3 \Upsilon (\chi,d_3) \bigr)}
\quad\quad\forall d=(d_1, d_2,d_3)^\top\in{\mathbb C}^3,
\end{aligned}
\end{equation*}
and, in particular, the following estimates hold:
\begin{equation*}
\bigl|\bigl(m_1^{(2)},m_2^{(2)}\bigr)^\top\bigr|\leq C|\chi|^3 \|f\|_{L^2(Q,\mathbb{C}^3)}, \quad\bigl|m_3^{(2)}\bigr| \leq C|\chi|^2 \|f\|_{L^2(Q,\mathbb{C}^3)}.
\end{equation*}
Furthermore, we define ${\mathfrak u}_4^{(2)} \in H^1_{\#}(Q, \mathbb{C}^3)$ such that
\begin{equation*} 
({\rm sym}\nabla)^{*}A\,{\rm sym}\nabla{\mathfrak u}_4^{(2)}=-({\rm sym}\nabla)^{*}A\bigl(\Xi\bigl(\chi, m_1^{(2)},m_2^{(2)}\bigr)-{\rm i}x_3\Upsilon\bigl(\chi,m_3^{(2)}\bigr)\bigr), \quad
\quad\int_{Q} {\mathfrak u}_4^{(2)}=0,  
\end{equation*}
and, particular, 
\begin{equation*}
\bigl\| \mathfrak{u}_4^{(2)}\bigr\|_{H^1(Q,\mathbb{C}^3)}\leq C|\chi|^4 \|f\|_{L^2(Q, \mathbb{C}^3)},
\end{equation*}
as well as ${\mathfrak u}_5^{(1)} \in H^1_{\#}(Q, \mathbb{C}^3)$ such that 
\begin{align*}
&({\rm sym}\nabla)^{*}A\,{\rm sym}\nabla{\mathfrak u}_5^{(1)} = {\rm i}\Bigl(X^*A\,{\rm sym}\nabla\bigl({\mathfrak u}_4^{(1)}+{\mathfrak u}_4^{(2)}\bigr)
-({\rm sym}\nabla)^{*}A\bigl\{X\bigl({\mathfrak u}_4^{(1)}+{\mathfrak u}_4^{(2)}\bigr)\bigr\}\\[0.6em]
&+X^*A\bigl(\Xi\bigl(\chi, m_1^{(2)},m_2^{(2)}\bigr)-{\rm i}x_3\Upsilon\bigl(\chi, m_3^{(2)}\bigr)\bigr)\Bigr)
+{\rm i}X^{*} A\bigl\{X\bigl(\mathfrak{u}_3^{(1)}+\mathfrak{u}_3^{(2)}\bigr)\bigr\}\\[0.5em] 
&-|\chi|^4\bigl(m_1^{(2)}+\bigl(\mathfrak{u}_3^{(1)}+\mathfrak{u}_3^{(2)}\bigr)_{1}-{\rm i}\chi_1 x_3 m_3^{(2)},m_2^{(2)}+\bigl(\mathfrak{u}_3^{(1)}+\mathfrak{u}_3^{(2)}\bigr)_{2}-{\rm i}\chi_2 x_3 m_3^{(2)},m_3^{(1)}\bigr)^\top,\quad 
\quad\int_{Q} {\mathfrak u}_5^{(1)}=0,
\end{align*}
and hence
\begin{equation*}
\bigl\| \mathfrak{u}_5^{(1)}\bigr\|_{H^1(Q,\mathbb{C}^3)}\leq C|\chi|^5 \|f\|_{L^2(Q, \mathbb{C}^3)}.
\end{equation*}
\vskip 0.2cm
\noindent{\it Step 4.} We update $m \in \mathbb{C}^3$ with $m^{(3)} \in \mathbb{C}^3$ 
in the same way as above by defining $\mathfrak{u}_5^{(2)}, \mathfrak{u}_6 \in H^1_{\#}(Q, \mathbb{C}^3)$. \\
\vskip 0.2cm
\noindent{\it Step 5.} 
In the same way as in Section \ref{section_prvaasimptotika}, it follows that
\begin{equation}
\begin{aligned}
\bigl\|\overline{e_\chi}  u_\alpha-(m_{\alpha}-{\rm i}\chi_\alpha m_3 x_3)
\bigr\|_{H^1(Q, {\mathbb C})} & \leq C|\chi|^2\|f\|_{L^2(Q, {\mathbb C}^3)} ,\qquad\alpha=1,2,
\\[0.5em]
\| \overline{e_\chi}  u_3-m_3
\|_{H^1(Q, {\mathbb C})} & \leq  C|\chi|\|f\|_{L^2(Q, {\mathbb C}^3)},
\\[0.5em]
\bigl\|\overline{e_\chi}  u_\alpha-\bigl(m_{\alpha}+m_{\alpha}^{(1)}-{\rm i}\chi_\alpha (m_3+m_3^{(1)})x_3+(\mathfrak{u}_2)_{\alpha}\bigr)\bigr\|_{H^1(Q, {\mathbb C})} & \leq C|\chi|^3\|f\|_{L^2(Q, {\mathbb C}^3)} ,\qquad\alpha=1,2,
\\[0.5em]
\bigl\|\overline{e_\chi}  u_3-\bigl(m_3+m_3^{(1)}\bigr)\bigr\|_{H^1(Q, {\mathbb C})} & \leq C|\chi|^2\|f\|_{L^2(Q, {\mathbb C}^3)}. 
\end{aligned}
\label{korrre1oo2}
\end{equation}

\subsubsection{Second scaling} 
\label{second_scaling_section}
\textbf{Asymptotic equation.} An approximating problem for \eqref{scalingdva} has the form
\begin{equation}
\begin{aligned} 
\vert\chi\vert^{-2}A^{\textrm{hom}}_{\chi}m\cdot\overline{d} 
&+\int_Q (m_1-{\rm i}\chi_1 x_3 m_3, m_2-{\rm i}\chi_2 x_3m_3,m_3)^\top \cdot \overline{(d_1-{\rm i}\chi_1 x_3 d_3, d_2-{\rm i}\chi_2 x_3d_3,d_3)^\top}
\\[0.3em] &=\int_Qf
 \cdot \overline{e_\chi(d_1-{\rm i}\chi_1 x_3 d_3,d_2 -{\rm i}\chi_2 x_3d_3,d_3)^\top}\qquad\forall d=(d_1, d_2,d_3)^\top\in{\mathbb C}^3,
\end{aligned}
\label{korekcija100pon0} 
\end{equation}
and the following estimate holds:
\begin{equation} \label{korekcija10pon0}
|m| \leq C\|f\|_{L^2(Q,\mathbb{C}^3)}.
\end{equation} 
Setting 
\[
d_1=d_2=0,\qquad d_3= m_3-\int_Q \overline{e_{\chi}} f_3, 
\]
in (\ref{korekcija100pon0}), we obtain 
\begin{equation*} 
\left|m_3-\int_Q \overline{e_{\chi}} f_3\right| \leq C|\chi|\|f\|_{L^2(Q,\mathbb{C}^3)}.
\end{equation*} 
In the same way as in Remark \ref{korekcijajed} it is shown that this equation is equivalent to 
\begin{equation}
\bigl(\vert\chi\vert^{-2}A^{\textrm{hom}}_{\chi}+1\bigr)m\cdot\overline{d} 
=\int_Qf
\cdot \overline{e_\chi(d_1-{\rm i}\chi_1 x_3 d_3, d_2-{\rm i}\chi_2 x_3 d_3,d_3)^\top}
\qquad\quad\forall d=(d_1, d_2,d_3)^\top\in{\mathbb C}^3.
\label{marita10003} 
\end{equation}
\\
\textbf{Approximation error estimates for the second scaling.} 
{\it Step 1.} 
We define ${\mathfrak u}_1 \in H^1_{\#} (Q, \mathbb{C}^3)$ as the solution to
\begin{equation}\label{revision1003000pon0} 
({\rm sym}\nabla)^{*}A\,{\rm sym}\nabla {\mathfrak u}_1=-({\rm sym}\nabla)^{*}A\bigl( \Xi(\chi, m_1,m_2)-{\rm i}x_3\Upsilon(\chi,m_3) \bigr),\quad {\mathfrak u}_1 \in H^1_{\#}(Q, \mathbb{C}^3), \quad\int_Q {\mathfrak u}_1=0,          
\end{equation} 
and infer from \eqref{korekcija10pon0} that 
\begin{equation*}
\|{\mathfrak u}_1\|_{H^1(Q, {\mathbb C}^3)} \leq C  |\chi|\|f\|_{L^2(Q, {\mathbb C}^3)}. 
\end{equation*}
Next we define ${\mathfrak u}_2^{(1)}\in H^1_{\#} (Q, \mathbb{C}^3)$ that satisfies 
\begin{equation}
\begin{aligned} 
& ({\rm sym}\nabla)^{*}A\,{\rm sym}\nabla  {\mathfrak u}_2^{(1)}={\rm i}\bigl\{X^{*}A\,{\rm sym}\nabla {\mathfrak u}_1-({\rm sym}\nabla)^{*}A(X{\mathfrak u}_1)+X^{*} A \bigl(\Xi (\chi,m_1,m_2)-{\rm i}x_3\Upsilon(\chi,m_3)\bigr)\bigr\}
\\[0.5em] 
&-|\chi|^2 (m_1-{\rm i}\chi_1x_3 m_3,m_2-{\rm i}\chi_2x_3 m_3,0)^\top +|\chi|^2 \overline{e_\chi}f
-|\chi|^2\biggl(0,0,\int_Q\overline{e_\chi}f_3\biggr)^{\top},
\quad \int_Q {\mathfrak u}_2^{(1)}=0.
\end{aligned} 
\label{sinisa101000}
\end{equation}

It follows from \eqref{korekcija100pon0} and \eqref{revision1003000pon0} that the right-hand side of \eqref{sinisa101000} vanishes when tested with constant vectors, and 
\begin{equation*}
\bigl\|{\mathfrak u}_2^{(1)}\bigr\|_{H^1(Q, {\mathbb C}^3)} \leq C|\chi|^2\|f\|_{L^2(Q, {\mathbb C}^3)}.
\end{equation*}
\\
{\it Step 2.} 
We update $m \in \mathbb{C}^3$ with $m^{(1)} \in \mathbb{C}^3,$ which we define to satisfy
\begin{equation}
\begin{aligned}
&A^{\textrm{hom}}_{\chi} m^{(1)}\cdot \overline{d}+|\chi|^2\int_Q\bigl(m_1^{(1)}-{\rm i}\chi_1 x_3 m_3^{(1)},m_2^{(1)}-{\rm i}\chi_2 x_3 m_3^{(1)},m_3^{(1)}\bigr)^\top\cdot \overline{(d_1-{\rm i}\chi_1 x_3 d_3,d_2-{\rm i}\chi_2 x_3 d_3,d_3)^\top}
\\[0.5em] &=-\int_Q A(\sym \nabla \mathfrak{u}_2^{(1)}+{\rm i}X\mathfrak{u}_1): \overline{\bigl(\Xi (\chi,d_1,d_2)-{\rm i}x_3\Upsilon(\chi,d_3)\bigr)}
 \quad\quad\forall d=(d_1, d_2,d_3)^\top\in{\mathbb C}^3.
\end{aligned}
\label{sinisa3*secondoooo}
\end{equation}
First, setting $d=m^{(1)}$ in (\ref{sinisa3*secondoooo}), we obtain  
\begin{equation}
	\bigl|m^{(1)}\bigr|\leq C |\chi| \|f\|_{L^2 (Q,\mathbb{C}^3)},
	\label{first_m_est}
\end{equation}	
Second, setting $d_1=d_2=0,$ $d_3=m_3^{(1)}$ in \eqref{sinisa3*secondoooo} and using (\ref{first_m_est}), we additionally obtain 
\begin{equation*}
	 \bigl|m_3^{(1)}
	\bigr| \leq C |\chi|^2\|f\|_{L^2(Q, \mathbb{C}^3)}. 
\end{equation*}	


Next, we define ${\mathfrak u}_2^{(2)} \in H^1_{\#}(Q, \mathbb{C}^3)$ so that
\begin{equation*}
({\rm sym}\nabla)^{*}A\,{\rm sym}\nabla {\mathfrak u}_2^{(2)}=-({\rm sym}\nabla)^{*}A\bigl( \Xi\bigl(\chi, m_1^{(1)},m_2^{(2)}\bigr)-{\rm i}x_3\Upsilon\bigl(\chi,m_3^{(1)}\bigr) \bigr),
\quad\int_Q {\mathfrak u}_2^{(2)}=0,          
\end{equation*} 
and, in particular, the estimate
\begin{equation*}
\bigl\|\mathfrak{u}_2^{(2)}\bigr\|_{H^1(Q, \mathbb{C}^3)} \leq C |\chi|^2 \|f\|_{L^2(Q,\mathbb{C}^3)}
\end{equation*}
holds.

Finally, consider ${\mathfrak u}_3 \in H^1_{\#}(Q, \mathbb{C}^3)$ such that
\begin{equation}
\begin{aligned} 
&({\rm sym}\nabla)^{*}A\,{\rm sym}\nabla{\mathfrak u}_3= {\rm i}\Bigl(X^{*}A\,{\rm sym}\nabla\bigl({\mathfrak u}_2^{(1)}+{\mathfrak u}_2^{(2)}\bigr)-({\rm sym}\nabla)^{*}A\bigl\{X\bigl( {\mathfrak u}_2^{(1)}+{\mathfrak u}_2^{(2)}\bigr)\bigr\}\\[0.7em]
&\hspace{+24ex} +X^{*} A \bigl(\Xi\bigl(\chi,m_1^{(1)},m_2^{(1)}\bigr)-{\rm i}x_3\Upsilon\bigl(\chi,m_3^{(1)}\bigr)\bigr)\Bigr)
+{\rm i}X^{*} A(X \mathfrak{u}_1)
\\[0.2em]  
&-|\chi|^2 \left(m_1^{(1)}-{\rm i}\chi_1x_3 m_3^{(1)}+({\mathfrak u}_1)_1, m_2^{(1)}-{\rm i}\chi_2x_3 m_3^{(1)}
+({\mathfrak u}_1)_{2}, m_3+({\mathfrak u}_1)_{3}-\int_Q\overline{e_\chi}f_3
\right)^\top,
\quad \int_Q {\mathfrak u}_3=0.
\end{aligned} 
\label{julian2}
\end{equation}
It is easy to see that, as a consequence of \eqref{korekcija100pon0}, \eqref{sinisa101000} and \eqref{sinisa3*secondoooo}, the right-hand side of \eqref{julian2} vanishes when tested with constant vectors. 
 Thus \eqref{julian2} has a unique solution, and 
$$ 
\|{\mathfrak u}_3\|_{H^1(Q, {\mathbb C}^3)} \leq C|\chi|^3\|f\|_{L^2(Q, {\mathbb C}^3)}.  
$$
\vskip 0.2cm
\noindent{\it Step 3.} Similarly to Section \ref{section_drugaasimptotika} we infer that for all $\chi\in Q'_{\rm r}$
\begin{equation*}
\begin{aligned}
\|\overline{e_\chi}  u-m\|_{H^1(Q, {\mathbb C^3})} & \leq C|\chi||\|f\|_{L^2(Q, {\mathbb C}^3)},
\\[0.6em]
\bigl\|\overline{e_\chi}  u_\alpha-\bigl(m_{\alpha}+m_{\alpha}^{(1)}-{\rm i}\chi_\alpha m_3 x_3+(\mathfrak{u}_1)_{\alpha}\bigr)\bigr\|_{H^1(Q, {\mathbb C})} & \leq C|\chi|^2|\|f\|_{L^2(Q, {\mathbb C}^3)} ,\qquad\alpha=1,2,
\\[0.4em]
\bigl\|\overline{e_\chi}  u_3-\bigl(m_3+m_3^{(1)}+(\mathfrak{u}_1)_{3}\bigr)\bigr\|_{H^1(Q, {\mathbb C})} & \leq C|\chi|^2|\|f\|_{L^2(Q, {\mathbb C}^3)}. 
\end{aligned}
\end{equation*}
\begin{remark} 
\label{rem53}
1.	The above procedure allows us to continue the asymptotic expansion up to any order in $|\chi|.$ 
Since we terminated the asymptotic procedure at the second corrector, the above estimates are optimal for the eigenspace corresponding to the eigenvalue of ${\mathcal A}_\chi$ of order $|\chi|^2.$ 
Optimality for the eigenspace of  the eigenvalue 
of order $|\chi|^4$ would require higher-order correctors. These are non-standard in the theory of homogenisation and can be obtained by achieving an error of order $|\chi|^4$ in the mentioned subspace, as an analysis similar to that presented in the next chapter shows.


2. The reason for not including the standard corrector $({\mathfrak u}_2)_3$
 in the last estimate in (\ref{korrre1}) is that already the $|\chi|^2$-error requires to include a non-standard corrector $m_3^{(1)},$ and incorporating $({\mathfrak u}_2)_3$ would only make sense if $m_3^{(2)}$ were included as well (thereby resulting in a $|\chi|^3$ error for the third component), which we have refrained from doing.
 
3. In summary, our method not only produces optimal operator-norm estimates for the difference between the solutions to the original equation and the standard plate equations, but it also provides a way to compute correctors that improve these estimates (up to the errors that result from neglecting the eigenspaces the higher-order spectrum of order one).
\end{remark}
\section{Norm-resolvent and energy estimates for the infinite plate}
\label{normresolvent}
Here we interpret the error estimates obtained above in terms the original family of elasticity operators ${\mathcal A}^\varepsilon,$ $\varepsilon>0,$ in $L^2(\Pi^h , {\mathbb R}^3),$ $\Pi^h ={\mathbb R}^2\times(-h/2, h/2),$ given by the differential expressions ({\it cf.} (\ref{original_identity}), where $h=\varepsilon$)
$$ 
({\rm sym}\nabla)^{*}A(x_1/\varepsilon, x_2/\varepsilon)\,{\rm sym}\nabla.  
$$
This will complete the proofs of our results formulated in Section \ref{formulation}. For each value $\gamma\ge-2$ we use the representation (\ref{vonNeumann1}) in Section \ref{strategynovelties}. 
We shall discuss separately the cases of planar-symmetric and general elasticity tensors. 
In addition to the proofs of the main results,
we will address the $L^2\to H^1$ estimates (``energy estimates") and higher-order $L^2\to L^2$ estimates.  

Notice that while the $|\chi|$-uniform estimates we have proved in Section \ref{asymptotic_proc} do not yield small errors for finite values of $|\chi|,$ they will suffice for the proof of the stated operator-norm estimates, as finite values of $\chi$ make a controllably small contribution into the resolvent asymptotics. This will be evident from the optimality analysis with respect to $\chi$ in each of the two parts of Section \ref{sym_case_res} and in Section \ref{gen_case_sec}.

\subsection{Case of planar-symmetric elasticity tensor}

\label{sym_case_res}

Following a convention similar to that of Section \ref{apriori_sec}, we attach the overscripts $\,\widehat{}\,$ and $\widecheck{\phantom{a}}$ to the force components that are even and odd in $x_3,$ respectively. 

\subsubsection{First invariant subspace}
\label{app_first}


\noindent\textbf{Proof of $L^2\to L^2$ estimate (\ref{first_subspace_est}).} It follows from the analysis of Section \ref{structure}, that in the for each $\chi\in Q'_{\rm r},$ the largest eigenvalue of the ``fibre" $\left(\varepsilon^{-\gamma-2}  \mathcal{A}_\chi+I\right)^{-1}$
is of order 
$$
\left(\max\left\{\varepsilon^{-\gamma-2}|\chi|^4,1\right\}\right)^{-1},
$$ 
and all other eigenvalues are at most of order $\bigl(\max\left\{\varepsilon^{-\gamma-2},1\right\}\bigr)^{-1}\sim\varepsilon^{\gamma+2},$ $\varepsilon\to0.$
In deriving the sought approximation for $(\varepsilon^{-\gamma}{\mathcal A}^\varepsilon+I)^{-1},$ we neglect the spectral projections of 
$(\varepsilon^{-\gamma-2} \mathcal{A}_\chi+I)^{-1}$ (see (\ref{vonNeumann1})) onto its eigenspaces corresponding to the latter eigenvalues,
which results in an overall error of order $\varepsilon^{\gamma+2-\delta}.$ 

Furthermore, it follows from the first two estimates in \eqref{korrre1} that in the eigenspace of $(\varepsilon^{-\gamma-2} \mathcal{A}_\chi+I)^{-1}$  corresponding to its largest eigenvalue, the leading order of the approximation error 
is given by 
\begin{equation}
C|\chi|^p \left(\max\left\{\varepsilon^{-\gamma-2}|\chi|^4,1\right\}\right)^{-1} \max\bigl\{\varepsilon^{-\delta}|\chi|,1\bigr\},
\label{two_orders}
\end{equation}
where $p=2$ and $p=1$ for the first two components and 
 third component, respectively. Indeed, for each $\varepsilon>0,$ $\chi\in Q'_{\rm r}\setminus\{0\},$ we write
\[
\varepsilon^{-\gamma-2}  \mathcal{A}_{\chi}+I=\varepsilon^{-\gamma-2}|\chi|^4\bigl(|\chi|^{-4} \mathcal{A}_{\chi} \bigr)+I
\]
and notice that for all $\eta>0$ the function 
\begin{equation}
f_{\varepsilon,\chi}(\zeta):=\bigl(\varepsilon^{-\gamma-2}|\chi|^4\zeta+1\bigr)^{-1},\quad  \Re(\zeta)>0,
\label{function_f}
\end{equation}
is bounded in the half-plane $\{\zeta: \Re(\zeta)>\eta\}$ by the expression $\left(\max\{\varepsilon^{-\gamma-2}|\chi|^4\eta,1 \}\right)^{-1},$ which depends on $\eta$ but is independent of 
$\varepsilon.$ 
Furthermore, we use
the Riesz integral representation
\begin{equation}
 \label{marita1} 
{\mathcal P}_1^\chi\left(\varepsilon^{-\gamma-2}  \mathcal{A}_{\chi}+I\right)^{-1}{\mathcal P}_1^\chi=\frac{1}{2\pi{\rm i}} \oint_{\Gamma}f_{\varepsilon,\chi} (\zeta) \left(\zeta I-|\chi|^{-4} \mathcal{A}_{\chi}\right)^{-1}d\zeta,
\end{equation} 
where ${\mathcal P}_1^\chi$ is the projection onto the eigenspace of the lowest eigenvalue of ${\mathcal A}_\chi.$
Here we choose $\rho_2>0$ and a contour $\Gamma\subset\{\zeta: \Re(\zeta)>\eta\}$ 
with the following three properties satisfied for all $\chi\in Q'_{\rm r}\setminus\{0\},$ $|\chi|<\rho_2$ (see Lemma \ref{revlemma}): 

a) The domain bounded by $\Gamma$ contains the lowest eigenvalue of the operator $|\chi|^{-4}\mathcal{A}_{\chi};$ 

b) The same domain contains  none of the higher eigenvalues of $|\chi|^{-4}\mathcal{A}_{\chi};$ 

c) The distance from $\Gamma$ to the spectrum of  $|\chi|^{-4} \mathcal{A}_{\chi}$ is bounded below by a positive constant.

The claim concerning the error bound (\ref{two_orders}) follows by approximating the resolvent under the integral sign 
in \eqref{marita1}, uniformly on $\Gamma,$ using the first two estimates in \eqref{korrre1} and Remark \ref{korekcijajed}. The existence of a value $\rho_2$ with the required properties follows from the bounds (\ref{bbb0})--(\ref{bbb4}), and  $\rho_2$ in turn determines an appropriate choice of $\eta$ above. (Notice that analogous estimates ensure that for a suitable choice of $\rho_2$ the contour $\Gamma$ possesses the properties a, c above in relation to the eigenvalue of ${\mathcal A}_\chi^{{\rm hom},1},$ whenever $|\chi|<\rho_2.$) For quasimomenta $\chi$ such that $|\chi|>\rho_2$ the required bound holds automatically.

It is easily seen that the maximum in (\ref{two_orders}) is attained for 
$\varepsilon^{-\gamma-2}|\chi|^4 \sim 1.$ 
Therefore, the total error is of order
$$
\varepsilon^{p(\gamma+2)/4} \max\bigl\{\varepsilon^{(\gamma+2)/4-\delta},1\bigr\},
$$ 
and the estimate (\ref{first_subspace_est}) follows by applying the inverse Floquet transform.


\vskip 0.2cm

\noindent\textbf{$L^2\to H^1$ estimate.} To obtain an approximation for the resolvents $(\varepsilon^{-\gamma}{\mathcal A}^\varepsilon+I)^{-1}$  as mappings from 
$L^2(\Pi^h , {\mathbb R}^3)$ to $H^1(\Pi^h , {\mathbb R}^3),$  we use the third and fourth estimates in \eqref{korrre1} and include the corrector $\mathfrak{u}_2$ in the first two components.\footnote{Notice that neglecting the spectral projection onto the eigenspace of an order-one eigenvalue of ${\mathcal A}_\chi$ 
 leads to an error of order $\varepsilon^{-1}$ in the energy norm.} To this end, for each $\chi\in Q_{\rm r}'$ consider the 
bounded operator
\begin{equation}
 {\mathfrak K}_1^{\rm b}: L^2(\Pi)\ni U\mapsto\varepsilon^{-2}{\mathcal F }_\varepsilon^*{\mathfrak S}_{\rm b}^1\left(\varepsilon^{-\gamma-2} A^{\textrm{hom},1}_{\chi}+I \right)^{-1}\!{\mathcal F}_\varepsilon{U},
 \label{correctorb}
 \end{equation}
where ${\mathfrak S}_{\rm b}^1$ takes $m_3 \in \mathbb{C}$ to the solution of \eqref{sinisa1001}.
 Notice that ${\mathfrak K}^{\rm b}_1$ is the standard corrector in the theory of homogenisation, see {\it e.g.} \cite{BLP}. The overall error of neglecting the spectral projections of $(\varepsilon^{-\gamma}{\mathcal A}^\varepsilon+I)^{-1}$ onto the eigenspaces corresponding to higher eigenvalues of $(\varepsilon^{-\gamma-2}{\mathcal A}_\chi+I)^{-1}$ in the direct integral (\ref{vonNeumann1}) is of order $\varepsilon^{\gamma+1-\delta}$ and, with the corrector 
${\mathfrak K}^{\rm b}_1$  included,
the $H^1$ error in each fibre\footnote{Recall that under the Floquet transform, the operator of differentiation with respect to $x_\alpha,$ $\alpha=1,2,$ is mapped to 
$\varepsilon^{-1}e_\chi(\partial_{y_{\alpha}}+{\rm i}\chi_\alpha)\overline{e_\chi}.$}
is bounded by
\begin{equation}
C\max\bigl\{|\chi|^p,\varepsilon^{-1}|\chi|^{p+1}\bigr\} \left(\max\left\{\varepsilon^{-\gamma-2}|\chi|^4,1\right\}\right)^{-1} \max\bigl\{\varepsilon^{-\delta} |\chi|,1\bigr\},
\label{two_ordersH1}
\end{equation}
where $p=2$ and $p=1$ for the first two components and 
 third component, respectively. As in the case of (\ref{two_orders}), the maximum in (\ref{two_ordersH1}) is attained for
$\varepsilon^{-\gamma-2}|\chi|^4  \sim 1,$ 
and therefore the overall approximation error is of order
\[
\max\bigl\{\varepsilon^{p(\gamma+2)/4},\varepsilon^{(p+1)(\gamma+2)/4-1}\bigr\} \max\bigl\{\varepsilon^{(\gamma+2)/4-\delta},1\bigr\}. 
\]
As a result, there exists $C>0$ such that
\begin{equation*}
  \begin{aligned}
  &\Bigl\|R^hP_{\alpha} \left(\varepsilon^{-\gamma} \mathcal{A}^{\varepsilon}+I\right)^{-1}\bigl(\varepsilon^{-\delta}\widecheck{F\,}\!\!_1,\varepsilon^{-\delta}\widecheck{F\,}\!\!_2, \widehat{F}_3\bigr)^\top
  \\[0.5em]& \hspace{+5ex}
  +\Re\Bigl\{\left(\varepsilon x_3\partial_{\alpha}\left(  \varepsilon^{-\gamma+2}\mathcal{A}^{{\rm hom}, 1} +I \right)^{-1}-\varepsilon^2P_{\alpha} {\mathfrak K}^{\rm b}_1\right)\bigl( SR^h\widehat{F}_3+\varepsilon^{-\delta}S_1R^h\widecheck{F\,}\!\!_1 +\varepsilon^{-\delta}S_2R^h\widecheck{F\,}\!\!_2\bigr)\Bigr\}\Bigr\|_{H^1(\Pi)} 
  \\[0.5em] 
  &  \hspace{+10ex}  
  \leq C\max\bigl\{\varepsilon^{(\gamma+2)/2}, \varepsilon^{3(\gamma+2)/4-1}\bigr\}
  \max\bigl\{\varepsilon^{(\gamma+2)/4-\delta},1\bigr\}
  \Bigl\|R^h\bigl(\widecheck{F}_1,\widecheck{F}_2, \widehat{F}_3\bigr)^\top\Bigr\|_{L^2(\Pi ,\mathbb{R}^3)} , \quad  \alpha=1,2, \\[1.3em]
  &\Bigl\|R^hP_3 \left(\varepsilon^{-\gamma} \mathcal{A}^{\varepsilon}+I\right)^{-1}\bigl(\varepsilon^{-\delta}\widecheck{F\,}\!\!_1,\varepsilon^{-\delta}\widecheck{F\,}\!\!_2, \widehat{F}_3\bigr)^\top\\[0.5em]
  &\hspace{+10ex}
 -\Re\Bigl\{\left(  \varepsilon^{-\gamma+2}\mathcal{A}^{{\rm hom}, 1} +I \right)^{-1}
  \bigl( SR^h\widehat{F}_3+\varepsilon^{-\delta}S_1R^h\widecheck{F\,}\!\!_1 +\varepsilon^{-\delta}S_2R^h\widecheck{F\,}\!\!_2\bigr)\Bigr\}\Bigr\|_{H^1(\Pi)} \\[0.3em] 
  &  \hspace{+20ex}  \leq C \max\bigl\{\varepsilon^{(\gamma+2)/4}, \varepsilon^{(\gamma+2)/2-1}\bigr\} 
  \max\bigl\{\varepsilon^{(\gamma+2)/4-\delta},1\bigr\}\Bigl\|R^h\bigl(\widecheck{F\,}\!\!_1,\widecheck{F\,}\!\!_2, \widehat{F}_3\bigr)^\top\Bigr\|_{L^2(\Pi , \mathbb{R}^3)}. 
  \end{aligned}
  \end{equation*}
\\
\textbf{Higher-order $L^2\to L^2$ estimate.} To obtain a higher-order approximation in $L^2(\Pi^h , \mathbb{R}^3)$, one defines a suitable corrector ${\mathfrak K}^{\rm b}_2,$ using \eqref{sinisa1000revision}, \eqref{sinisa1004}, \eqref{sinisa1010}, and \eqref{marita2}, for all $\chi\in Q_{\rm r}'$.  This corrector appears to be unknown in the homogenisation theory, although its derivation is similar to the elliptic argument of \cite{BirmanSuslina}.
We leave the details to the interested reader, noting that the associated error is bounded by 
\begin{equation}
C|\chi|^p\left(\max\left\{\varepsilon^{-\gamma-2}|\chi|^4,1\right\}\right)^{-1} \max\bigl\{\varepsilon^{-\delta} |\chi|,1\bigr\}.
\label{l2max}
\end{equation} 
where $p=3$ and $p=2$ for the first two components and 
third component, respectively. (Recall that the error of neglecting higher eigenvalues is of order $\varepsilon^{\gamma+2-\delta}$ in terms of the resolvent $(\varepsilon^{-\gamma}{\mathcal A}^\varepsilon+I)^{-1}.$) It is easily seen that the maximum in (\ref{l2max}) is attained for  
$\varepsilon^{-\gamma-2}|\chi|^4\sim 1,$
and hence the overall error is of order
$$
\varepsilon^{p(\gamma+2)/4} \max\bigl\{\varepsilon^{(\gamma+2)/4-\delta},1\bigr\}.
$$  
As a result, there exists $C>0$ such that
\begin{equation*}
  \begin{aligned}
  &\Bigl\|R^hP_{\alpha} \left(\varepsilon^{-\gamma} \mathcal{A}^{\varepsilon}+I\right)^{-1}\bigl(\varepsilon^{-\delta}\widecheck{F\,}\!\!_1,\varepsilon^{-\delta}\widecheck{F\,}\!\!_2, \widehat{F}_3\bigr)^\top\\[0.8em]
  &\hspace{+8ex}+\Re\Bigl\{\left(\varepsilon x_3\partial_{\alpha}\left(  \varepsilon^{-\gamma+2}\mathcal{A}^{{\rm hom}, 1} +I \right)^{-1}-\varepsilon^2P_{\alpha} {\mathfrak K}^{\rm b}_1\right)\bigl(SR^h\widehat{F}_3+ \varepsilon^{-\delta}S_1R^h\widecheck{F\,}\!\!_1+\varepsilon^{-\delta}S_2R^h\widecheck{F\,}\!\!_2 \bigr)\\[0.8em]
  &\hspace{+16ex}
  +\varepsilon x_3\partial_{\alpha}{\mathfrak K}^{\rm b}_2 R^h\bigl(\widecheck{F\,}\!\!_1,\widecheck{F\,}\!\!_2, \widehat{F}_3\bigr)^\top\Bigr\}\Bigr\|_{L^2(\Pi)}\\[0.8em]
  &\hspace{+24ex}\leq C \varepsilon^{3(\gamma+2)/4} 
  \max\bigl\{\varepsilon^{(\gamma+2)/4-\delta},1\bigr\}\Bigl\|R^h(\widecheck{F}_1,\widecheck{F}_2, \widehat{F}_3)^\top\Bigr\|_{L^2(\Pi ,\mathbb{R}^3)} , \quad  \alpha=1,2,
  \end{aligned}
  \end{equation*}	 
  \begin{equation*}
  \begin{aligned}
  &\Bigl\|R^hP_3 \left(\varepsilon^{-\gamma}\mathcal{A}^{\varepsilon}+I\right)^{-1}\bigl(\varepsilon^{-\delta}\widecheck{F\,}\!\!_1,\varepsilon^{-\delta}\widecheck{F\,}\!\!_2, \widehat{F}_3\bigr)^\top\\[0.9em]
  &\hspace{+12ex}-\Re\Bigl\{\left( \left(\varepsilon^{-\gamma+2}\mathcal{A}^{{\rm hom}, 1} +I \right)^{-1}-\varepsilon^2P_3 {\mathfrak K}^{\rm b}_1  \right) \bigl(SR^h\widehat{F}_3+\varepsilon^{-\delta}S_1 R^h\widecheck{F\,}\!\!_1+\varepsilon^{-\delta}S_2R^h\widecheck{F\,}\!\!_2  \bigr)
 \\[0.9em] 
 & \hspace{+24ex} 
  - {\mathfrak K}^{\rm b}_2R^h\bigl(\widecheck{F\,}\!\!_1,\widecheck{F\,}\!\!_2, \widehat{F}_3\bigr)^\top\Bigr\}\Bigr\|_{L^2(\Pi)}\\[0.8em]
  &\hspace{+36ex}\leq C \varepsilon^{(\gamma+2)/2} \max\bigl\{\varepsilon^{(\gamma+2)/4-\delta},1\bigr\}\Bigl\|R^h(\widecheck{F\,}\!\!_1,\widecheck{F\,}\!\!_2, \widehat{F}_3)^\top\Bigr\|_{L^2(\Pi ,\mathbb{R}^3)}.
  \end{aligned}
  \end{equation*}

\subsubsection{Second invariant subspace}
\label{app_second}

\noindent\textbf{Proof of $L^2\to L^2$ estimate (\ref{second_subspace_est}).} For each $\chi\in Q_{\rm r}'$ in the direct integral (\ref{vonNeumann1}),
the largest two eigenvalues of 
$\left(\varepsilon^{-\gamma-2} \mathcal{A}_\chi+I\right)^{-1}$ 
are of order 
\begin{equation}
\left(\max\left\{\varepsilon^{-\gamma-2}|\chi|^2,1\right\}\right)^{-1},
\label{largest_order}
\end{equation}
and all other eigenvalues are at least of order 
$\left(\max\left\{\varepsilon^{-\gamma-2},1\right\}\right)^{-1}\sim\varepsilon^{\gamma+2},$ $\varepsilon\to0.$
 In the approximation on each fibre we neglect the spectral projections of $(\varepsilon^{-\gamma-2} \mathcal{A}_\chi+I)^{-1}$ onto the eigenspaces corresponding to the latter 
eigenvalues, which results in an error of order $\varepsilon^{\gamma+2}$ for the operators $(\varepsilon^{-\gamma}{\mathcal A}^\varepsilon+I)^{-1}.$ 
Furthermore, the expression (\ref{largest_order}) and the first two estimates in \eqref{korrre1oo} imply that in the eigenspace corresponding to the largest two eigenvalues of  $(\varepsilon^{-\gamma-2} \mathcal{A}_\chi+I)^{-1},$ the approximation error is of order
\[
|\chi|\left(\max\left\{\varepsilon^{-\gamma-2}|\chi|^2,1\right\}\right)^{-1}.
\] 
As in Section \ref{app_first}, introducing, for each $\varepsilon>0,$ $\chi\in Q'_{\rm r}\setminus\{0\},$ the function 
\begin{equation}
g_{\varepsilon,\chi}(\zeta):=\bigl(\varepsilon^{-\gamma-2}|\chi|^2\zeta+1\bigr)^{-1},\quad  \Re(\zeta)>0,
\label{function_g}
\end{equation}
and integrating the expression $g_{\varepsilon,\chi} (\zeta)\left(\zeta I-|\chi|^{-2} \mathcal{A}_{\chi}\right)^{-1}$ over  a contour in the $\zeta$-plane that contains both order-one eigenvalues of $|\chi|^{-2} \mathcal{A}_{\chi},$
{\it cf.} (\ref{marita1}), we infer that the overall error is of order
$\varepsilon^{(\gamma+2)/2},$ and the estimate (\ref{second_subspace_est}) follows.
\\
\vskip 0.01cm
\noindent\textbf{$L^2\to H^1$ estimate.} Similarly to the analysis of the $L^2\to H^1$ error in the case of the first invariant subspace, for each $\chi\in Q_{\rm r}'$ we define the corrector 
 \begin{equation}
 {\mathfrak K}_1^{\rm m}: L^2(\Pi ,\mathbb{R}^2)\ni U\mapsto\varepsilon^{-1}{\mathcal F}_\varepsilon^*{\mathfrak S}^{\rm m}_1\left(\varepsilon^{-\gamma-2} A^{\textrm{hom},2}_{\chi}+I \right)^{-1}\!{\mathcal F}_\varepsilon U,
 \label{correctors}
 \end{equation}
where ${\mathfrak S}^{\rm m}_1$ is the solution operator for \eqref{revision1003}.
The resulting error is of order 
$$
\max\bigl\{|\chi|, \varepsilon^{-1}|\chi|^2\bigr\} \left(\max\left\{\varepsilon^{-\gamma-2}|\chi|^2,1\right\}\right)^{-1},
$$
so that 
\begin{equation*}
 \begin{aligned}
\Biggl\| R^h\left(\varepsilon^{-\gamma} \mathcal{A}^{\varepsilon}+I\right)^{-1}\bigl(\widehat{F}_1,\widehat{F}_2, \widecheck{F\,}\!\!_3\bigr)^\top&-\left(\begin{array}{c}\Re\Bigl\{\left(\left(  \varepsilon^{-\gamma}\mathcal{A}^{{\rm hom}, 2} +I \right)^{-1}+\varepsilon {\mathfrak K}^{\rm m}_1 \right)SR^h\bigl(\widehat{F}_1,\widehat{F}_2 \bigr)^\top\Bigr\}\\[0.5em]0\end{array}\right)\Biggr\|_{H^1(\Pi , \mathbb{R}^3)}\\[0.7em] 
 &\leq C \max\bigl\{\varepsilon^{(\gamma+2)/2}, \varepsilon^{\gamma+1}\bigr\} \Bigl\|R^h\bigl(\widehat{F}_1,\widehat{F}_2, \widecheck{F\,}\!\!_3\bigr)^\top\Bigr\|_{L^2(\Pi ,\mathbb{R}^3)}.
 \end{aligned} 
 \end{equation*}
Note that the cost of neglecting the projections 
onto the eigenspaces of of  $(\varepsilon^{-\gamma-2} \mathcal{A}_\chi+I)^{-1}$ corresponding to lower eigenvalues is of order $\varepsilon^{\gamma+1}$ in terms of the operator $(\varepsilon^{-\gamma}{\mathcal A}^\varepsilon+I)^{-1}.$
\\
\vskip 0.01cm
\noindent\textbf{Higher-order $L^2\to L^2$ estimate.} 
Similarly to the rationale of Section \ref{app_first}, for all $\chi\in Q_{\rm r}'$ we define a corrector ${\mathfrak K}^{\rm m}_2$ using \eqref{sinisa3*second}, \eqref{revision1003}, \eqref{sinisa101pon2}, \eqref{sinisa3*secondoo}, and obtain a fibre-wise error of order 
$$
|\chi|^2 \left(\max\left\{\varepsilon^{-\gamma-2}|\chi|^2,1\right\}\right)^{-1}.
$$
In terms of the original operator family ${\mathcal A}^\varepsilon,$
this yields the estimate
\begin{equation*}
\begin{aligned}
\Bigg\|R^h\left(\varepsilon^{-\gamma} \mathcal{A}^{\varepsilon}+I\right)^{-1}\bigl(\widehat{F}_1,\widehat{F}_2, \widecheck{F\,}\!\!_3\bigr)^\top&-\Re\Biggl\{\left(\begin{array}{c}\left(\left(\varepsilon^{-\gamma}\mathcal{A}^{{\rm hom}, 2} +I \right)^{-1}+\varepsilon {\mathfrak K}^{\rm m}_1 \right)SR^h\bigl(\widehat{F}_1, \widehat{F}_2\bigr)^\top\\[0.6em]0\end{array}\right)\\[0.7em]
&+{\mathfrak K}^{\rm m}_2R^h\bigl( \widehat{F}_1,  \widehat{F}_2, \widecheck{F}\!\!_3\bigr)^\top\Biggr\}\Bigg\|_{L^2(\Pi, \mathbb{R}^3)}
\leq C \varepsilon^{\gamma+2} \Bigl\|R^h\bigl(\widehat{F}_1,\widehat{F}_2, \widecheck{F\,}\!\!_3\bigr)^\top\Bigr\|_{L^2(\Pi,\mathbb{R}^3)}.
\end{aligned} 
\end{equation*}

\begin{remark} 
\label{expl3}  
1. For the case of a bounded plate of thickness $h$ which is homogeneous and isotropic, the eigenvalues  of orders $h^2$ and $1$ were approximated in \cite{Dauge}.
We find that among our estimates the most interesting are those for $\gamma=2$, $\delta=1$ and $\gamma=\delta=0.$ (The former is the scaling studied in \cite{ciarlet} and earlier in \cite{Miara}, while the second corresponds to the unscaled forces and unscaled time in the related evolution problem, see also a discussion in \cite{BuzChVZ} and in Section \ref{intro_section} of the present paper.) Furthermore, the scaling we adopt for the horizontal forces is standard for the derivation of stationary and evolution equations for plates, see \cite{ciarlet}. Our analysis shows that while this scaling does not influence the asymptotic behaviour of spectral projections, it will be important in the study of the evolution equations, which we postpone to a future publication. Moreover, it can be seen as natural from the computational point of view.

Since in the present paper we are interested in analysing asymptotic regimes that do not admit a limit operator and rather necessitate generalising the approach towards ``operator asymptotics" ({\it cf.} high contrast setups \cite{ChCoARMA}, \cite{CherErshKis}, where similar situations occur), it is reasonable to admit arbitrary values of $\gamma$ and $\delta$ and express the approximation error in terms of these parameters. In a future work we will discuss the consequences of these estimates for evolution problems. When dealing with the derivation of evolution problems for linear plate theory, one usually scales the mass density or time, see \cite{Raoult}. As a consequence, there is no inertia term for the horizontal components in the limit evolution equations. However, as also shown by our analysis, the information obtained by these limit equations is partial, since {\it e.g.} in the second invariant subspace (in case of a planar-symmetric elasticity tensor), one would not need to scale the density (or time) to obtain the limit equations ({\it i.e.} in-plane waves propagate on finite, unscaled, time intervals).

2. In the case of the second invariant subspace our resolvent estimates for ${\mathcal A}^\varepsilon$ are of the ``optimal" order $\varepsilon^{\gamma+2}.$ This is due to the fact that the error of approximating the operators ${\mathcal A}_\chi$ by an effective operator on the two-dimensional subspace corresponding to the two minimal eigenvalues is of the same order as the error of neglecting the eigenspaces that correspond to eigenvalues of order $\varepsilon^{-2}.$ Notice that any further approximation should include subspaces of oscillating eigenfunctions, which would take us outside the scope of the standard homogenised operator. 

For the first invariant subspace for the operators ${\mathcal A}^\varepsilon$, our estimates for ${\mathcal A}_\chi$ in the approximation subspace corresponding to the eigenvalue of order $\varepsilon^{-2}|\chi|^4$ are not optimal. In order to regain optimality, in addition to ${\mathfrak K}^{\rm b}_2$ we would have to include other ``non-standard"  homogenisation correctors,
see also Remark \ref{rem53}. A non-standard homogenisation corrector for a higher-order $L^2\to L^2$  estimate has previously  appeared in the literature \cite{BirmanSuslina_corrector}. However, the presence of more than one non-standard corrector is novel. 

3. Notice that for  $\gamma=0$ the error of the estimate of the third component in the $H^1$ norm in the first invariant subspace is of order one. This is, however, optimal if one does not want to include non-standard correctors, since in this case the $H^1$ norms of the third components of the original and approximating solutions are of lower order than $\varepsilon^0.$ Notice also that adding non-standard correctors can improve $L^2 \to H^1$ estimates, similarly to the way it improves $L^2 \to L^2$ estimates.  
\end{remark}

\subsection{General case: Theorem \ref{main_result_general}}

\label{gen_case_sec}


\noindent\textbf{Proof of $L^2\to L^2$ estimate (\ref{general_est}).} Arguing as in Section \ref{sym_case_res}, we notice that the largest eigenvalue of the fibre operator
$\left(\varepsilon^{-\gamma-2} \mathcal{A}_\chi+I\right)^{-1}$ 
is of order $\left(\max\left\{\varepsilon^{-\gamma-2}|\chi|^4,1\right\}\right)^{-1}\!\!,$ the following two are of  order 
$\left(\max\left\{\varepsilon^{-\gamma-2}|\chi|^2,1\right\}\right)^{-1}\!\!,$ 
and all remaining eigenvalues are at most of order $\varepsilon^{\gamma+2}.$ 
In the approximation we neglect the projections onto the eigenspaces of the latter eigenvalues, which results in an error of order $\varepsilon^{\gamma+2}$ for the operators $(\varepsilon^{-\gamma}{\mathcal A}^\varepsilon+I)^{-1}.$  

Under two different scalings for the fibres ${\mathcal A}_\chi,$ we obtain resolvent estimates in two eigenspaces, determined by the eigenvalues of orders $|\chi|^4$ and
 $|\chi|^2$. The operator 
$|\chi|^{-4} \mathcal{A}_{\chi}$ has one eigenvalue of order one, the others being at least of order $|\chi|^{-2},$ while $|\chi|^{-2} \mathcal{A}_{\chi}$ has one eigenvalue of order $|\chi|^2$ and two eigenvalues of order one, while the others are at least of order $|\chi|^{-2}$.  
Integrating the functions $f_{\varepsilon,\chi} (\zeta) \left(\zeta I-|\chi|^{-4} \mathcal{A}_{\chi}\right)^{-1},$ see (\ref{function_f}), and $g_{\varepsilon,\chi} (\zeta)\left(\zeta I-|\chi|^{-2} \mathcal{A}_{\chi}\right)^{-1},$ see (\ref{function_g}), over respective contours in the $\zeta$-plane 
that contain the leading eigenvalues, {\it cf.} (\ref{marita1}), allows us to obtain error estimates on each of the related subspaces ---
the overall error is then given by the maximum of the two. 

Proceeding with the above plan, for all quasimomenta $\chi$ such that 
$|\chi|\le\rho_3$ for some $\rho_3>0$ that can be defined {\it a priori}, we choose contours $\Gamma_1$ and $\Gamma_2,$ as follows. The contour $\Gamma_1$ contains the smallest eigenvalue of the operator $|\chi|^{-4} \mathcal{A}_{\chi}$ and no other eigenvalues (the nearest one being of order $|\chi|^{-2}$), while the contour $\Gamma_2$ contains two eigenvalues of order one of the operator $|\chi|^{-2}\mathcal{A}_{\chi}$ and no other eigenvalues (of which the smallest is of order $|\chi|^2$; all the others are at least of order $|\chi|^{-2}$). We  then write
\begin{align}
(\mathcal{P}_1^\chi+\mathcal{P}_2^\chi)&\bigl(\varepsilon^{-\gamma-2}  \mathcal{A}_{\chi}+I\bigr)^{-1}(\mathcal{P}_1^\chi+\mathcal{P}_2^\chi)= 
\mathcal{P}_1^\chi\left(\varepsilon^{-\gamma-2}  \mathcal{A}_{\chi}+I\right)^{-1}\mathcal{P}_1^\chi+\mathcal{P}_2^\chi\left(\varepsilon^{-\gamma-2}  \mathcal{A}_{\chi}+I\right)^{-1}\mathcal{P}_2^\chi\nonumber\\[0.6em]
&= \frac{1}{2\pi{\rm i}} \oint_{\Gamma_1}f_{\varepsilon,\chi} (\zeta) \bigl(\zeta I-|\chi|^{-4} \mathcal{A}_{\chi}\bigr)^{-1}d\zeta
 + \frac{1}{2\pi{\rm i}} \oint_{\Gamma_2}g_{\varepsilon,\chi} (\zeta) \bigl(\zeta I-|\chi|^{-2} \mathcal{A}_{\chi}\bigr)^{-1}d\zeta,\label{integrals}
\end{align}   
where $\mathcal{P}_1^\chi$ is the projection on the one-dimensional eigensubspace of the operator $\mathcal{A}_{\chi}$ whose eigenvalue is of order $|\chi|^4$ and $\mathcal{P}_2^\chi$ is the projection on the two-dimensional eigenspace corresponding to the eigenvalues of order $|\chi|^2$, {\it cf.} (\ref{marita1}). Finally apply the result of Section \ref{first_scaling_section} to the second term in (\ref{integrals}) and the result of Section \ref{second_scaling_section} to the second term. This completes the proof of (\ref{general_est}). 
\\
\vskip 0.01cm
\noindent\textbf{$L^2\to H^1$ estimate.} Similarly to the above ({\it cf.} (\ref{correctorb}), (\ref{correctors}) for analogous formulae introduced in the analysis of the bending and membrane subspaces), we define the corrector 
$$ 
 {\mathfrak K}_1: L^2(\Pi,\mathbb{R}^3)\ni U=\varepsilon^{-1}{\mathcal F}_\varepsilon^*{\mathfrak S}_1\left(\varepsilon^{-\gamma-2}A^{\textrm{hom}}_{\chi}+I \right)^{-1}{\mathcal F}_\varepsilon{U}, \qquad \chi\in Q_{\rm r}',
 $$
where ${\mathfrak S}_1$ is the solution operator for \eqref{otto1revision0000}.
 This yields the estimates
 \begin{align*}
 &\Bigl\|R^h P_{\alpha}\left(\varepsilon^{-\gamma} \mathcal{A}^{\varepsilon}+I\right)^{-1}F
 \\[0.5em] 
 &-\left(\left(P_{\alpha}-\varepsilon \partial_{\alpha}P_3\right)\left(  \varepsilon^{-\gamma}\mathcal{A}^{\rm hom}+I \right)^{-1}+\varepsilon  P_{\alpha} {\mathfrak K}_1\right)\bigl(SR^hF_1, SR^hF_2, SR^hF_3+S_1R^hF_1+S_2R^h F_2\bigr)^\top\Bigr\|_{H^1(\Pi)}\\[0.5em]
 & \hspace{+20ex}\leq C\max\bigl\{\varepsilon^{(\gamma+2)/2},\varepsilon^{\gamma+1},\varepsilon^{3(\gamma+2)/4-1}\bigr\} \bigl\|R^h F\bigr\|_{L^2(\Pi, \mathbb{R}^3)}, \quad \alpha=1,2, \\[0.6em]
 &\Bigl\|R^h P_{3}\left(\varepsilon^{-\gamma} \mathcal{A}^{\varepsilon}+I\right)^{-1}F
 \\[0.6em] 
 &-P_3\left(\left(  \varepsilon^{-\gamma}\mathcal{A}^{\rm hom}+I \right)^{-1}+\varepsilon {\mathfrak K}_1\right)\bigl(SR^hF_1, SR^hF_2, SR^hF_3+S_1R^hF_1+S_2R^hF_2\bigr)^\top\Bigr\}\Bigr\|_{H^1(\Pi)}\\[0.6em]
 & \hspace{+20ex}\leq C  \max\bigl\{\varepsilon^{(\gamma+2)/4}, \varepsilon^{(\gamma+2)/2-1}\bigl\|R^hF\bigr\|_{L^2(\Pi ,\mathbb{R}^3)}. 
 \end{align*}
\\
\vskip -0.4cm
\noindent\textbf{Higher-order $L^2\to L^2$ estimate.} Using \eqref{marita10000}, \eqref{otto1revision0000}, \eqref{marita10002}, \eqref{marita10001}, \eqref{marita10003}--\eqref{sinisa3*secondoooo}, 
for each $\chi\in Q_{\rm r}'$ we define a corrector ${\mathfrak K}_2$ ({\it cf.} Sections \ref{app_first}, \ref{app_second} for similar correctors ${\mathfrak K}_2^{\rm b},$ ${\mathfrak K}_2^{\rm m}$ for estimates in the bending and membrane subspaces) so that
\begin{equation*}
\begin{aligned}
 &\Bigl\| R^hP_{\alpha}\left(\varepsilon^{-\gamma} \mathcal{A}^{\varepsilon}+I\right)^{-1}F
 \\[0.8em] 
 &-\Bigl(\bigl(P_{\alpha}-\varepsilon \partial_{\alpha}P_3\bigr)\bigl( \varepsilon^{-\gamma}\mathcal{A}^{\rm hom}+I\bigr)^{-1}+\varepsilon  P_{\alpha} {\mathfrak K}_1\Bigr)\bigl(SR^hF_1,  SR^hF_2, SR^hF_3+S_1R^hF_1+S_2R^hF_2  \bigr)^\top\\[0.7em]
 & \hspace{+15ex}-(P_{\alpha}-\varepsilon x_3P_3){\mathfrak K}_2 R^h F\Bigr\|_{L^2(\Pi)}
 \leq C   \varepsilon^{3(\gamma+2)/4}\bigl\|R^h F\bigr\|_{L^2(\Pi ,\mathbb{R}^3)}, \quad \alpha=1,2, 
 \\[0.6em]
 &\Bigl\| R^h P_{3}\left(\varepsilon^{-\gamma} \mathcal{A}^{\varepsilon}+I\right)^{-1}F
 \\[0.8em] 
 &-P_3\left(\left(\varepsilon^{-\gamma}\mathcal{A}^{\rm hom}+I \right)^{-1}+\varepsilon {\mathfrak K}_1\right)\bigl(SR^hF_1, SR^hF_2, SR^hF_3+S_1R^hF_1+S_2R^h F_2  \bigr)^\top-P_3{\mathfrak K}_2R^hF\Bigr\|_{L^2(\Pi)}\\[0.5em] 
 & \hspace{+15ex}
 \leq C  \varepsilon^{(\gamma+2)/2} \bigl\|R^h F\bigr\|_{L^2(\Pi,\mathbb{R}^3)}. 
 \end{aligned}
 \end{equation*}

\section{Concluding remarks}
\label{concl_sec}

 \label{rgamma} 
\noindent 1. To adapt our results to the case when $h=h(\varepsilon) \sim \varepsilon$, {\it i.e.} when there exists $\alpha,\beta>0$, independent of $\varepsilon$, such that 
 $$ \alpha \leq \delta_{\varepsilon}:=h(\varepsilon)/\varepsilon\leq \beta \qquad \forall \varepsilon>0, $$
one has to change the definition of $\mathcal{L},$ $\mathcal{A}^{{\rm hom}, 1},$ $\mathcal{A}^{\rm hom},$ see (\ref{L_definition}), (\ref{Ahoms}), as follows: 
 \begin{equation*}
 \begin{aligned} 
 \mathcal{L}(M_1,M_2) : (M_1,M_2):= \inf_{\psi \in H^1_{\#}(Q,\mathbb{R}^3)}\int_Q A\bigl({\mathfrak I}( M_1-x_3M_2)&+{\rm sym}\nabla^{\delta_{\varepsilon}} \psi\bigr):\bigl( {\mathfrak I}(M_1-x_3 M_2)+{\rm sym}\nabla^{\delta_{\varepsilon}} \psi\bigr),\\[0.1em]
 &M_1,M_2 \in \mathbb{R}^{2 \times 2}, 
 \end{aligned}
 \end{equation*}
 $$ \mathcal{A}^{\textrm{hom},1}:=\delta_{\varepsilon}^2\bigl(\widetilde{\nabla}^2\bigr)^{*} \mathcal{L}^1 \widetilde{\nabla}^2, \qquad  \mathcal{A}^{\textrm{hom}}:=\bigl(\textrm{sym}\widetilde{\nabla}, h \widetilde{\nabla}^2\bigr)^{*} \mathcal{L}\bigl(\textrm{sym}\widetilde{\nabla}, h \widetilde{\nabla}^2\bigr), 
 $$
where, as before, 
 $\nabla^{\delta_{\varepsilon}}=(\partial_{1}, \partial_{2}, \delta_{\varepsilon}^{-1}\partial_{3})$ is the 
``$x_3$-rescaled" gradient.
  	
	In this more general case, the constants on the right-hand sides of the estimates in Theorem \ref{main_result_first}, Theorem \ref{main_result_second}  and Theorem \ref{main_result_general} depend on $\alpha, \beta$. Moreover, in the expressions on the left-hand sides of the estimates containing the term $\varepsilon x_3$ (related to the Kirchhoff-Love ansatz) in Theorem \ref{main_result_first}, Theorem \ref{main_result_general}  as well as in Section \ref{app_first}, Section \ref{gen_case_sec}, this term should be replaced by $hx_3.$ 
	 These modifications follow from an analogue of Lemma \ref{neN_est}, which in turn is based on the existence of  $C(\alpha,\beta)>0$ such that
\begin{equation}
 	\|u\|_{H^1(Q, \mathbb{C}^3)}  \leq  C(\alpha,\beta)\bigl( \|u\|_{L^2(Q,\mathbb{C}^3)}+\bigl\|\textrm{sym}\nabla^{\delta_{\varepsilon}} u\bigr\|_{L^2(Q, \mathbb{C}^3)}\bigr) \quad \forall u \in H^1(Q, \mathbb{C}^3),
\label{C_one}
\end{equation}	
 as well as
 \begin{equation}
 \begin{aligned} 
 \|u_1- a_1\delta_{\varepsilon}x_3-dy_2-c_1 \|_{H^1(Q, {\mathbb C})} &\leq C(\alpha,\beta)\bigl\| \textrm{sym}\nabla^{\delta_{\varepsilon}} u\bigr\|_{L^2(Q, {\mathbb C}^{3\times3})},\\[0.6em]
 \|u_2- a_2\delta_{\varepsilon} x_3+dy_1-c_2 \|_{H^1(Q, {\mathbb C})} &\leq C(\alpha,\beta)\bigl\| \textrm{sym} \nabla^{\delta_{\varepsilon}} u\bigr\|_{L^2(Q, {\mathbb C}^{3\times3})}, \\[0.6em]
 \|u_3+ a_1y_1+a_2y_2- c_3 \|_{H^1(Q, {\mathbb C})} &\leq C(\alpha,\beta)\bigl\| \textrm{sym}\nabla^{\delta_{\varepsilon}} u\bigr\|_{L^2(Q, {\mathbb C}^{3\times3})}
 \end{aligned}
 \label{C_two}
 \end{equation}
 for all $u \in H^1(Q, \mathbb{C}^3),$ where 
 \begin{align*}
 a_\alpha:=&\int_Q\bigl(\delta_{\varepsilon}^{-1}\partial_3 u_\alpha-\partial_\alpha u_3\bigr), \quad \alpha=1,2,
 \qquad d:=\int_Q(\partial_2 u_1-\partial_1 u_2),\\[0.4em]  
c_1:=&\int_Q u_1-d, \qquad c_2:=\int_Q u_2+d, \qquad c_3:=\int_Q u_3+a_1+a_2.
 \end{align*}   
In order to proceed with the corresponding version of the asymptotic expansion in Section \ref{asymptotic_proc}, the following Korn-type inequality can also be proved with $C(\alpha, \beta)>0:$
\begin{equation}
\biggl\|u-\int_Q u\biggr\|_{H^1(Q,\mathbb{C}^3)} \leq  C(\alpha,\beta) \bigl\|\textrm{sym} \nabla^{\delta_{\varepsilon}}u\bigr\|_{L^2(Q, \mathbb{C}^{3\times 3})} 
\qquad \forall u \in H^1_{\#}(Q, \mathbb{C}^3). 
\label{periodicKorn}
\end{equation}
The value of $C(\alpha, \beta)$ such that 
(\ref{C_one})--(\ref{periodicKorn}) hold can be obtained by controlling uniformly the constants in the Korn inequalities on cuboids $Q_{\rm r} \times (-\varkappa/2, \varkappa/2)$, $\varkappa\in[\alpha, \beta].$ The way to ensure such control, in turn, follows from 
 \cite[Theorem 1.3]{Griso}, see also 
\cite[Section 2.4]{OShY}.

\vskip 0.3cm
\noindent 2. One of the advantages of our method is that all constants appearing on the 
right-hand sides of the error estimates can be evaluated explicitly, as in \cite{ChCoARMA}, \cite{CherErshKis}. These constants depend on: the Poincar\'{e} and Korn constants for the cube, the norm of the $H^1\hookrightarrow L^2$ trace operator for the cube, 
and the constant $C_1$ in the inequality
\begin{equation}
\|\nabla u \|_{L^2(Q_{\rm r}, \mathbb{C}^2)} \leq C_1\|\textrm{sym} \nabla u\|_{L^2(Q_{\rm r}, \mathbb{C}^{2 \times 2})}\quad \forall u \in H^1_{\chi}(Q_{\rm r}, \mathbb{C}^2),\quad\chi\in Q_{\rm r}'.
\label{C1_estimate}
\end{equation}
All these quantities are used in the proof of Lemma \ref{neN_est} or its analogue for the general case $h\sim\varepsilon$ (where for the latter we replace $\nabla$ by $\nabla^{\delta_\epsilon}$).  Furthermore, for the analysis of Section \ref{asymptotic_proc} we require the existence of $C_2>0$ such that
\begin{equation}
\|\nabla u \|_{L^2(Q,\mathbb{C}^{3 \times 3})} \leq C_2 \|\textrm{sym} \nabla u \|_{L^2(Q,\mathbb{C}^{3 \times 3})}\quad \forall u \in H^1_{\#} (Q, \mathbb{C}^3),
\label{C2_estimate}
\end{equation}
which together with the Poincar\'{e} inequality provides the necessary versions of (\ref{C_one}) and (\ref{periodicKorn}). 
The constant $C_1$ in (\ref{C1_estimate}) can be obtained by Fourier transform, 
while $C_2$ in (\ref{C2_estimate}) evaluated on the basis of $C(\alpha, \beta)$ in (\ref{C_one})---(\ref{periodicKorn}), by using the analysis of Lemma \ref{neN_est}. 

\vskip 0.3cm

\noindent 3. The method of the present work can be adapted for obtaining order-sharp operator-norm asymptotic estimates for rod-like and shell-like structures, see \cite{CDVZ}. Finally, one can combine the method of the present paper with the strategy of \cite{ChDO}, who developed an approach to order-sharp norm-resolvent estimates for multi-dimensional homogenisation problems with respect to arbitrary periodic Borel measures (including measures with non-trivial singular components). A brief reflection will convince the reader that the case of plate-like structures containing singular components can thereby be analysed to the same order of approximation error, and for the same range of energy and force density scalings as in the present work. 

\section*{Acknowledgements}
KC is grateful for the support of
the Engineering and Physical Sciences Research Council (EPSRC): Grant EP/L018802/2 ``Mathematical foundations of metamaterials: homogenisation, dissipation and operator theory''. IV has been supported by the Croatian Science Foundation under Grant agreement No.~9477 (MAMPITCoStruFl) and Grant agreement No. IP-2018-01-8904 (Homdirestroptcm). KC would also like to thank the Isaac Newton Institute for Mathematical Sciences, Cambridge, for support and hospitality during the programme ``The mathematical design of new materials", where work on this paper was undertaken. This work was supported by EPSRC grant no EP/R014604/1.

\end{document}